\documentclass[12pt]{amsart}
\usepackage{amsmath,amssymb, xypic,verbatim,amscd,color}
\usepackage{graphicx}
\usepackage{colordvi}

\topmargin=-10pt \textheight=626pt     \textwidth=474pt
\oddsidemargin=-3pt   \evensidemargin=-3pt

\numberwithin{equation}{section}

\newcommand\dlog{\operatorname{dlog}}

\newcommand\frg{\mathfrak{g}}
\newcommand\frh{\mathfrak{h}}

\newcommand\home{\operatorname{Hom}}

\newcommand\pone{\Bbb{P}^1}

\newcommand\mv{\mathcal{V}}
\newcommand\tensor{\otimes}

\newcommand\mh{\mathcal{H}}

\newcommand\Res{\operatorname{Res}}

\newcommand{\leto}[1]{\stackrel{#1}{\to}}

\newtheorem{theorem}{Theorem}[section]
\newtheorem{remark}[theorem]{ Remark}

\newtheorem{question}[theorem]{Question}
\newtheorem{proposition}[theorem]{Proposition}

\newtheorem{lemma}[theorem]{Lemma}

\newtheorem{definition}[theorem]{Definition}
\newtheorem{defi}[theorem]{Definition}

\begin{document}
\title[Conformal blocks and cohomology]{Conformal blocks and cohomology in genus 0}
\author{Prakash Belkale}
\author{Swarnava Mukhopadhyay}
\thanks{The authors were partially supported by NSF grant  DMS-0901249.}

\address{Department of Mathematics\\ UNC-Chapel Hill\\ CB \#3250, Phillips Hall
\\ Chapel Hill, NC 27599}
\email{belkale@email.unc.edu}
\email{swarnava@umd.edu}
\subjclass{Primary 17B67, 14H60, Secondary 32G34, 81T40}
\begin{abstract} We give a characterization of conformal blocks in terms of the singular cohomology of suitable smooth projective varieties, in genus $0$ for classical Lie algebras and $G_2$.
\end{abstract}
\maketitle
\section{Introduction}
Consider  a finite dimensional simple Lie algebra $\frg$, a non-negative integer $k$ called the level and a $N$-tuple $\vec{\lambda}=(\lambda_1,\dots,\lambda_N)$ of
dominant weights of $\frg$ of level $k$. Associated to this data there is a vector bundle of conformal blocks $\mathcal{V}=\mathcal{V}_{\vec{\lambda},k}$ on $\overline{\mathfrak{M}}_{g,N}$, the moduli stack of stable $N$-pointed curves of genus $g$ ~\cite{TK,TUY}. The fibers of $\mv$ on $\mathfrak{M}_{g,N}$  can also be described in terms of sections of natural line bundles on suitable moduli stacks of parabolic principal bundles on $N$-pointed curves of genus $g$ (see the survey ~\cite{sorger}).

Now suppose $g=0$. Let $\mathcal{C}$ be the configuration space of $N$ distinct points on $\Bbb{A}^1$.
Let $\vec{z}=(z_1,\dots,z_N)\in\mathcal{C}$ and  $\mathfrak{X}(\vec{z})$ be the corresponding $N$-pointed curve. Consider the space of conformal blocks $V^{\dagger}_{\vec{\lambda}}(\mathfrak{X}(\vec{z}))$ associated to this data. In ~\cite{B} (generalizing work of Ramadas \cite{TRR}, and using work of Schechtman-Varchenko ~\cite{SV}), an injective map from $V^{\dagger}_{\vec{\lambda}}(\mathfrak{X}(\vec{z}))$
to the (topological) cohomology of a smooth and projective variety $\overline{Y}_{\vec{z}}$, consistent with connections, was constructed (we recall this construction in Section ~\ref{exam}). Our aim here is to characterize
the image of this injective map, for classical $\frg$ and $G_2$. This gives a cohomological description of genus $0$ conformal blocks. We hope that the result extends to the remaining cases for $\frg$, but note that our methods get more difficult to implement in these cases (see Remark ~\ref{f4remark}).

 Let $\mathfrak{h}$ be a Cartan subalgebra of $\frg$ and $\Delta \subset \mathfrak{h}^*$ be the root system of $(\frg,\mathfrak{h})$. Associated to $\Delta$, the Lie algebra $\frg$ has a the following Cartan decomposition:
$$\frg=\mathfrak{h}\bigoplus_{\alpha \in \Delta}\frg_{\alpha},$$ where $\frg_{\alpha}$ is a one dimensional vector space of weight $\alpha$. The set of roots $\Delta$, is decomposed into a union $\Delta_{+}\cup \Delta_{-}$ of positive and negative roots. The set of simple (positive) roots is denoted by $R=\{\alpha_1,\dots, \alpha_r\}$, where $r$ is the rank of the Lie algebra $\frg$. Let $(\ ,\ )$ denote the Cartan Killing form normalized such that $(\theta, \theta)=2$, where $\theta$ is the highest root. We identify $\mathfrak{h}$ with $\mathfrak{h}^*$ using $(\ ,\ )$.

Let us recall the main result of ~\cite{B}. Assume that $\mu=\sum_{i=1}^{N} \lambda_i$ is in the root lattice (otherwise $V^{\dagger}_{\vec{\lambda}}(\mathfrak{X})=0$) and $(\lambda_i,\theta)\leq k, i\in [N]$. Write
$\mu=\sum_{p=1}^r n_p \alpha_p$, where $\alpha_p$ are the simple positive roots and $n_p\geq 0$. Fix a  map $\beta:[M]=\{1,\dots,M\} \to R$, so that
$\mu=\sum_{a=1}^M \beta(a)$.

Introduce variables $t_1,\dots,t_M\in \pone-\{\infty,z_1,\dots,z_N\}$. We will consider the variable $t_a$ to be colored by the simple root $\beta(a)$. Now consider the following Schechtman-Varchenko master function  ~\cite{SV}:
$$\mathcal{R}=\displaystyle\prod_{1\leq i<j\leq N}(z_i-z_j)^{\frac{-(\lambda_i,\lambda_j)}{\kappa}}\displaystyle\prod_{a=1}^M \displaystyle\prod_{j=1}^N(t_a-z_j)^{\frac{(\lambda_j,\beta(a))}{\kappa}}
\displaystyle\prod_{1\leq a< b\leq M}  (t_a-t_b)^{\frac{-(\beta(a),\beta(b))}{\kappa}},$$
where $\kappa = k+g^*$ where $g^*$ is the dual Coxeter number of $\frg$. We observe that the exponents in the Schechtman-Varchenko master function  $\mathcal{R}$ may not be integers. Fix a sufficiently divisible positive integer $C$ so that
\begin{equation}\label{snl1}
C(\lambda_i,\lambda_j),\ C(\beta(a),\beta(b)),\ C(\beta(a),\lambda_i)\in \Bbb{Z}, \forall a,b\in [M],\ i,j\in [N],\ a<b,\ i<j,
\end{equation}
and an ``evenness'' condition
\begin{equation}\label{snl2}
C(\alpha,\alpha) \in 2\Bbb{Z}, \forall \alpha\in R.
\end{equation}


Consider the following complement of a
hyperplane arrangement $$X_{\vec{z}}=\{(t_1,\dots, t_M)\in \Bbb{A}^M: t_a\neq t_b,  t_a \neq z_i, i\in [N], a<b\in [M]\},$$
and an unramified (possibly disconnected) cover of $X_{\vec{z}}$ given by $Y_{\vec{z}}=\{(t_1,\dots, t_M,y)\mid y^{C\kappa} = P\}$,
where
\begin{equation}
P=\displaystyle\prod_{1\leq i<j\leq N}(z_i-z_j)^{-C(\lambda_i,\lambda_j)}\displaystyle\prod_{a=1}^M \displaystyle\prod_{j=1}^N(t_a-z_j)^{C(\lambda_j,\beta(a))}
\displaystyle\prod_{1\leq a< b\leq M}  (t_a-t_b)^{-C(\beta(a),\beta(b))}.
\end{equation}
The master function $\mathcal{R}$ lifts to a single valued function of $Y_{\vec{z}}$ which we again denote by $\mathcal{R}$. The group $\mu_{C\kappa}\subseteq \Bbb{C}^*$, of $(C\kappa)$-th roots of unity, acts on ${Y}_{\vec{z}}$. Let $\Sigma$ be the subgroup of the symmetric group $S_M$ on $M$ letters given by
$$\Sigma=\{\sigma\in S_M\mid \beta(\sigma(a))=\beta(a)\}.$$ Clearly, $\Sigma$ is a product of symmetric groups $S_{n_1}\times \dots \times S_{n_r}$, where $S_{n_p}=\{\operatorname{id}\}$ for $n_p=0$.

The ``evenness" condition \eqref{snl2} ensures that the permutation $(a,b)$ acts trivially on the function $(t_a-t_b)^{-C(\beta(a),\beta(b))}$ when $\beta(a)=\beta(b)$, and hence, $G=\Sigma\times \mu_{C\kappa}$ acts on ${Y}_{\vec{z}}$ by the rule
$$(\sigma,c)(t_1,\dots,t_M,y)=(t_{\sigma^{-1}(1)},\dots,t_{\sigma^{-1}(M)},cy).$$

Define the character $\chi:G\to \Bbb{C}^*$ by
$$\chi(\sigma,c)=c^{-1} \epsilon(\sigma),$$
where $\epsilon$ is the sign character.

By equivariant resolution of singularities, the action of $G$ on ${Y}_{\vec{z}}$ extends to a suitable smooth compactification $\overline{Y}_{\vec{z}}$. In ~\cite{TRR} (for $\frg=\mathfrak{sl}_2$) and subsequently in ~\cite{B} (for arbitrary $\frg$) a natural inclusion (which by ~\cite{SV} preserves connections)
\begin{equation}\label{brahms}
V^{\dagger}_{\vec{\lambda}}(\mathfrak{X}(\vec{z}))\hookrightarrow H^{M,0}(\overline{Y}_{\vec{z}},\Bbb{C})
\end{equation}
 was constructed.  Recall that $H^{M,0}(\overline{Y}_{\vec{z}},\Bbb{C})$ injects into $H^{M}({Y}_{\vec{z}},\Bbb{C})$ and is independent of the compactification. The construction in ~\cite{B}, shows that the image of ~\eqref{brahms} is in $H^{M,0}(\overline{Y}_{\vec{z}},\Bbb{C})^{\chi}$ (this was noted before for $\frg=\mathfrak{sl}_2$ in ~\cite{TRR}). This uses the fact that the relevant correlations functions (see Section ~\ref{marcel}) are symmetric in variables $t_a$ and $t_b$ if $\beta(a)=\beta(b)$ (the sign character is because differentials $dt_a$ and $dt_b$ skew commute in the exterior algebra of forms).

\begin{theorem}\label{main}
The inclusion
\begin{equation}\label{brahms2}
V^{\dagger}_{\vec{\lambda}}(\mathfrak{X}(\vec{z}))\hookrightarrow (H^{M,0}(\overline{Y}_{\vec{z}},\Bbb{C}))^\chi,
\end{equation}
 is an isomorphism for $\frg$ classical and $G_2$.
\end{theorem}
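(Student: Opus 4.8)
Since \eqref{brahms2} is injective, it suffices to prove that $\dim(H^{M,0}(\overline{Y}_{\vec{z}},\Bbb{C}))^{\chi}$ equals the Verlinde number $\dim V^{\dagger}_{\vec{\lambda}}(\mathfrak{X}(\vec{z}))$, which does not depend on $\vec{z}$. Carrying out the equivariant resolution of the excerpt in a family over the configuration space $\mathcal{C}$ (possible since $\mathcal{C}$ is smooth and $G$ acts trivially on it), one obtains a smooth projective morphism $\overline{\mathcal{Y}}\to\mathcal{C}$ with fibres $\overline{Y}_{\vec{z}}$ and a fibrewise $G$-action; then $\dim(H^{M,0}(\overline{Y}_{\vec{z}}))^{\chi}$ is itself independent of $\vec{z}\in\mathcal{C}$, since Hodge numbers are locally constant in a smooth projective family and the $\chi$-isotypic splitting, being induced by algebraic automorphisms, refines the Hodge decomposition. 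We may therefore compute at any $\vec{z}$, and in particular degenerate toward the boundary of $\overline{\mathfrak{M}}_{0,N}$.

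The plan is to induct on $N$ (the cases $N\le 2$ are elementary) using factorization. On the conformal-block side, degenerating $\mathfrak{X}(\vec{z})$ to a two-component nodal curve that separates off the last two markings gives the factorization isomorphism
\begin{equation*}
V^{\dagger}_{\vec{\lambda}}(\mathfrak{X})\ \cong\ \bigoplus_{\mu}\ V^{\dagger}_{(\lambda_1,\dots,\lambda_{N-2},\mu)}\otimes V^{\dagger}_{(\mu^{*},\lambda_{N-1},\lambda_N)},
\end{equation*}
the sum over dominant weights $\mu$ of level $k$. I would establish the matching statement for the cohomology: as $z_{N-1},z_N$ collide, semistable reduction of the compactified family gives a normal-crossings degeneration of $\overline{Y}_{\vec{z}}$ whose limit mixed Hodge structure has top Hodge piece $F^{M}$ (the limit of the spaces $H^{M,0}$) decomposing as $\bigoplus_{\mu}(H^{M',0}(\overline{Y}'_{\vec{z}}))^{\chi'}\otimes(H^{M'',0}(\overline{Y}''_{\vec{z}}))^{\chi''}$. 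Here $\mu$ records how the $M$ variables $t_a$ split between the two components of the limit --- those that stay bounded versus those absorbed into the bubbling $\Bbb{P}^1$ --- so that $M=M'+M''$ compatibly with $\beta$, and the $\mu_{C\kappa}$- and $\Sigma$-actions, together with the evenness hypothesis \eqref{snl2}, distribute over the two factors so that the resulting characters are again of the form in the theorem. Crucially, the largest power of $y$ a holomorphic top form may carry along the boundary stratum without acquiring a pole translates, through $\kappa=k+g^*$ and the exponents of $\mathcal{R}$, into the condition that $\mu$ be dominant \emph{of level $k$}: this is how the level-$k$ truncation is produced geometrically, so that one lands in $V^{\dagger}$ rather than in a generic solution space of the KZ equations. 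The inclusion \eqref{brahms} respects both decompositions, by the description of the correlation functions in Section~\ref{marcel}, so the theorem reduces to the base case $N=3$.

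For $N=3$ one fixes $z_1,z_2,z_3$ and matches two explicit numbers: $\dim(H^{M,0}(\overline{Y}_{\vec{z}}))^{\chi}$, a signed combinatorial count governed by the intersection lattice of the fixed arrangement $\{t_a=t_b\}\cup\{t_a=z_i\}$ in $\Bbb{A}^{M}$ (with $M$ the height of $\mu=\lambda_1+\lambda_2+\lambda_3$) together with the conditions at $\infty$ coming from the exponents $\tfrac{(\lambda_j,\beta(a))}{\kappa}$ and $-\tfrac{(\beta(a),\beta(b))}{\kappa}$; and $\dim V^{\dagger}_{\lambda_1,\lambda_2,\lambda_3}$, the level-$k$ fusion coefficient. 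For $\frg$ of classical type the weights $\beta(a)$ can be taken in a standard coordinate realization of the root system, in which this arrangement, its equivariant resolution, and the conditions at $\infty$ become product-like and the count unwinds to the usual combinatorial description of fusion coefficients; for $G_2$ the low rank permits a direct enumeration.

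The main obstacle is this double confluence of explicit models --- in the inductive step, showing that the pole-order (admissibility) conditions along the degenerating boundary reproduce exactly the level-$k$ truncation governing $V^{\dagger}$; and in the base case, that the enumeration of holomorphic top forms on $\overline{Y}_{\vec{z}}$ equals the Verlinde number. Both hinge on the fine arithmetic of the root system (the shape of $\theta$, the value of $g^*$, the pairings among simple roots), and for $F_4$ and the $E_n$ the intersection lattices of the relevant arrangements, and hence the equivariant resolutions, become markedly more intricate --- the source of the restriction in the statement (cf.\ Remark~\ref{f4remark}).
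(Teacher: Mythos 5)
Your reduction to a dimension count is legitimate in principle (the map \eqref{brahms2} is injective, $\dim V^{\dagger}_{\vec{\lambda}}$ is constant in $\vec{z}$, and the local constancy of $\dim(H^{M,0}(\overline{Y}_{\vec{z}}))^{\chi}$ over the configuration space can be justified), but the two steps that would actually carry the proof are asserted rather than established, and each is of essentially the same difficulty as the theorem itself. First, the cohomological factorization: as $z_{N-1}\to z_N$ the compactified cyclic cover $\overline{Y}_{\vec{z}}$ degenerates in a complicated way, and you would need to prove that the limit of the $\chi$-isotypic part of $F^{M}$ splits as a sum over dominant weights $\mu$ \emph{of level $k$} of products of the corresponding spaces for the two components. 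The natural index set on the geometric side is the set of ways the $M$ colored points distribute between the two components, i.e.\ a decomposition of $\sum\lambda_i$ in the root lattice; the passage from that to dominant weights truncated at level $k$ is precisely the conformal-block structure you are trying to detect, and your one-sentence heuristic about the allowable power of $y$ along the boundary does not substitute for a proof (nor does it explain why non-dominant or above-level $\mu$ contribute zero). Second, the base case $N=3$: computing $\dim(H^{M,0}(\overline{Y}_{\vec{z}}))^{\chi}$ for a three-point arrangement and matching it with the fusion coefficient is not a routine combinatorial unwinding; no closed formula for this Hodge number of the cover is available, and for the paper's own method the case $N=3$ is no easier than general $N$. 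You have correctly located the hard points, but locating them is where the proposal stops.

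For contrast, the paper's proof does not degenerate or count dimensions at all. It works at a fixed generic $\vec{z}$ and shows directly that every $\chi$-invariant class is in the image: first (Theorem~\ref{Main1}) that any $\omega=\mathcal{R}p^*\Omega$ in $(H^{M,0})^{\chi}$ forces $\Omega$ to be a logarithmic form on $X_{\vec{z}}$, proved by a case-by-case control of pole orders of iterated residues along diagonal strata using square-integrability (this is where the restriction to classical $\frg$ and $G_2$ enters, via explicit root-system degree lemmas); and second (Theorem~\ref{cook}) that the element of $(M')^*_0$ corresponding to such a log form under the Schechtman--Varchenko isomorphism satisfies the Serre relations, integrability, $\frg$-invariance, and the level-$k$ condition $\langle\Psi|T^{k+1}|\vec{v}\rangle=0$, all extracted from residue identities and the sum-of-residues theorem on $\pone$. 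If you want to pursue your route, the factorization of the $\chi$-isotypic Hodge piece under degeneration would itself be a substantial new theorem; as written, the proposal is a plan with its two load-bearing steps missing.
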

Theorem ~\ref{main} for $\frg=\mathfrak{sl}(2)$ is due to Looijenga ~\cite{loo} and Varchenko (unpublished).
\begin{remark} If $H$ is the weight $M$-part of the cohomology group $H^M(Y_{\vec{z}},\Bbb{C})$ then $H$ carries an action of $\Sigma\times \mu_{C\kappa}$ (using functoriality of mixed Hodge structures). The group on the right hand side of ~\eqref{brahms2} is ($H^{M,0})^{\chi}$.
\end{remark}

\begin{question}
Is there a generalization of Theorem ~\eqref{brahms2} in higher genus (consistent with connections)? See ~\cite{BFS}, Chapter 6, Section 19.9 for a related statement.
\end{question}

A few reductions can be made immediately. Consider an element $\omega\in (H^{M,0}(\overline{Y}_{\vec{z}},\Bbb{C}))^\chi$. On ${Y}_{\vec{z}}$, $\omega$ can be expressed as a differential form $\mathcal{R}q^*\Omega$ where
 $q:Y_{\vec{z}}\to X_{\vec{z}}$ is the covering map. Furthermore $\Omega$ is of the form
 $$\Omega=Q(t_1,\dots,t_M) dt_1\wedge dt_2\wedge\dots\wedge dt_M,$$ where $Q$ is symmetric under the action of $\Sigma$. These properties follow immediately from the invariance conditions.
 The main body of the proof is broken up into  two steps:
 \subsubsection{The first step}
 With $\Omega$ as above, using the symmetry of $Q$ under the action of $\Sigma$ and the fact that $\omega=\mathcal{R}q^*\Omega$  extends to (any) compactification of  $Y_{\vec{z}}$ (or equivalently that $\mathcal{R}\Omega$ is  a multivalued,  square integrable form on $X_{\vec{z}}$), we will show that $\Omega$ is a log-form on $X_{\vec{z}}$ (the notion  of a log-form
 is reviewed in Section ~\ref{starlog}). A part of this argument is done case by case (for classical Lie algebras and $G_2$).
 \subsubsection{The second step}
 We will use results in ~\cite{SV}, ~\cite{bea} and ~\cite{FSV} to conclude the argument. In ~\cite{SV}, elements in duals of tensor products of Verma modules (of the corresponding Lie algebra ``without Serre relations'')  are constructed from suitable log forms on weighted hyperplane arrangements. We show that these elements lie in the space of conformal blocks thereby showing the surjectivity of ~\eqref{brahms2}. This step again uses the square integrability of $\mathcal{R}\Omega$. It also uses the description of the space of conformal blocks as a quotient of coinvariants (cf. ~\cite{FSV}, ~\cite{bea}). This step should be compared with \cite{loo2}.

\section{Conformal blocks}\label{exam}
The affine Lie algebra $\hat{\frg}$ is defined to be
$$\hat{\frg}=\frg\tensor \Bbb{C}((\xi))\oplus \Bbb{C}c,$$
where $c$ is an element in the center of $\hat{\frg}$ and the Lie algebra structure is defined by
$$[X\tensor f(\xi),Y\tensor g(\xi)]= [X,Y]\tensor f(\xi)g(\xi) + (X,Y)\Res_{\xi=0}(gdf).c,$$
where $f,g\in \Bbb{C}((\xi))$ and $X,Y\in \frg$.

Let $$X(n)= X\tensor \xi^n, \ X=X(0)=X\tensor 1,\ X\in \frg, n\in \Bbb{Z}.$$

\subsection{Representation theory of affine Lie-algebras}\label{UA}
 Recall that finite dimensional irreducible representations of $\frg$ are parameterized by the set of dominant integral weights $P_+$  considered a subset of $\frh^*$. To $\lambda\in P_+$, the corresponding irreducible representation $V_{\lambda}$ contains a non-zero vector $|\lambda\rangle\in V_{\lambda}$ (the highest weight vector) such that
 $$H|\lambda\rangle=\lambda(H)|\lambda\rangle, H\in \frh,$$ $$X_{\alpha}|\lambda\rangle=0, X_{\alpha}\in \frg_{\alpha}, \forall \alpha\in \Delta_{+}.$$

 We will fix a level $k$ in the sequel. Let $P_k=\{\lambda\in P_+\mid (\lambda,\theta)\leq k\}$ denote the set of dominant integral weights of level $k$, where  $\theta$ is the highest (longest positive) root.

For each $\lambda\in P_k$ there is a corresponding highest weight integrable  irreducible $\hat{\frg}$-module $\mathcal{H}_{\lambda}\supseteq V_{\lambda}$ (see ~\cite{B} for more details). The representation $\mathcal{H}_{\lambda}$ when $\lambda=0$ (still at level $k$) is called the vacuum representation at level $k$.

\subsection{Conformal blocks}
We will work in genus $g=0$, but state the definitions in greater generality.
To define conformal blocks  we will fix a stable $N$ pointed curve with formal coordinates
$\mathfrak{X}= (C; P_1,\dots,p_N,\eta_1,\dots,\eta_N)$ with $\eta_i: \hat{\mathcal{O}}_{C,P_i}\leto{\sim}
\Bbb{C}[[\xi_i]],\ i=1,\dots, N$, and choose
$\vec{\lambda}=(\lambda_1,\dots,\lambda_N)\in P_k^N$. There are a
number of definitions relevant to the situation: Let
$$\hat{\frg}_N=\bigoplus_{i=1}^N \frg\tensor_{\Bbb{C}}
\Bbb{C}((\xi_i))\oplus \Bbb{C}c$$ be the Lie algebra with $c$ a
central element and the Lie bracket given by
$$[\sum_{i=1}^N X_i\tensor f_i,\sum_{i=1}^N Y_i\tensor g_i]= \sum_{i=1}^N [X_i,Y_i]\tensor f_ig_i +
\sum_{i=1}^N (X_i,Y_i) \operatorname{Res_{P_i}}(g_i df_i)c.$$

Using the chosen formal coordinate we can realize  $$\frg(\mathfrak{X})=\frg\tensor_{\Bbb{C}}H^0(C-\{P_1,\dots,P_N\},\mathcal{O})$$ as a Lie subalgebra of $\hat{\frg}_N$. Let $\vec{\lambda}$ be as above. Set
$$\mh_{\vec{\lambda}}=\mh_{\lambda_1}\tensor\dots\tensor\mh_{\lambda_N}.$$

For a given $X\in \frg$ and $f\in\Bbb{C}((\xi_i))$, define
$\rho_i(X\tensor f)$ an endomorphism of $\mh_{\vec{\lambda}}$ by
$$\rho_i(X\tensor f)| v_1\rangle\tensor\dots\tensor |v_N\rangle=|
v_1\rangle\tensor\dots \tensor(X\tensor f|v_i\rangle) \tensor\dots\tensor |v_N\rangle,$$
where  $| v_i\rangle\in\mh_{\lambda_i}$ for each $i$.

We can now define the action of $\hat{\frg}_N$ on
$\mh_{\vec{\lambda}}$ by
$$(X_1\tensor f_1,\dots, X_N\tensor f_N)| v_1\rangle\tensor\dots\tensor
|v_N\rangle = \sum_{i=1}^N \rho_i(X_i\tensor f_i)| v_1\rangle\tensor\dots\tensor
|v_N\rangle.$$
\begin{defi}
Define the space of conformal blocks  ~\cite{Ueno}
$$V^{\dagger}_{\vec{\lambda}}(\mathfrak{X})=
\home_{\Bbb{C}}(\mh_{\vec{\lambda}}/\frg(\mathfrak{X})\mh_{\vec{\lambda}},\Bbb{C}).$$

\end{defi}

Elements of
$V^{\dagger}_{\vec{\lambda}}(\mathfrak{X})$   are
frequently denoted by $\langle\Psi|$, those
of $\mh_{\vec{\lambda}}$ by
$|\Phi\rangle$, and the pairing by $\langle\Psi|\Phi\rangle$.

\subsection{Propagation of vacua}
Add a new point $P_{N+1}$ together with the vacuum representation
$V_0$ of level $k$, at $P_{N+1}$. Also fix a formal neighborhood at
$P_{N+1}$. We therefore have a new pointed curve $\mathfrak{X}'$,
and an extended
$\vec{\lambda}'=(\lambda_1,\dots,\lambda_N,\lambda_{N+1}=0)$. The
propagation of vacuum gives an  isomorphism \cite{Ueno}:
$$V^{\dagger}_{\vec{\lambda}'}(\mathfrak{X}')\leto{\sim}V^{\dagger}_{\vec{\lambda}}(\mathfrak{X}),
\ \langle\widehat{{\Psi}}|\mapsto \langle{\Psi}|$$ with the key formula
$$\langle\widehat{{\Psi}}|(|\Phi\rangle\tensor|0\rangle)= \langle{\Psi}|\Phi\rangle.$$

\section{Correlation functions and the extension theorem}\label{corro}
\subsection{Correlation functions}
 Suppose $\mathfrak{X}$ be a stable $N$-pointed curve with formal coordinates. Let $\langle{\Psi}|\in V^{\dagger}_{\vec{\lambda}}(\mathfrak{X})$,
 $|\Phi\rangle\in \mh_{\vec{\lambda}}$, $Q_1,\dots,Q_M\in C\setminus\{P_1,\dots,P_N\}$, $Q_a\neq Q_b, a<b$ and corresponding elements
 $X_1,\dots,X_M\in \frg$. There is a very important differential in  $\bigotimes_{a=1}^M\Omega^1_{C,Q_a}$ called a correlation function
 \begin{equation}\label{correlator}
\Omega= \langle{\Psi}|X_1(Q_1)X_2(Q_2)\dots X_M(Q_M)|\Phi\rangle.
  \end{equation}
 Here $\Omega^1_{C}$ is the vector bundle of holomorphic one-forms on $C$.

We now briefly recall the definition and some important properties of correlation functions (cf. ~\cite{Ueno} for more details). One way to define \eqref{correlator} is via propagation by vacua: add points $Q_1,\dots Q_M$ with formal coordinates $\xi_1,\dots,\xi_M$ and consider the elements $X_a(-1)|0\rangle$ in the vacuum representation at those points. Let $\vec{0}_M$ denote an $M$-tuple of $0$-weights and let $\vec{\lambda}'=(\vec{0}_M,\vec{\lambda})$. Consider the element  $|X_1(-1)|0\rangle\tensor X_2(-1)|0 \rangle\dots X_M(-1)|0\rangle\tensor |\Phi\rangle$ of $\mathcal{H}_{\vec{\lambda}'}$. Let $\langle \widehat{\Psi}|\in V^{\dagger}_{\vec{\lambda}'}(\mathfrak{X}')$ denote the image, under propagation of vacua, of $\langle \Psi |$. Here $\mathfrak{X}'$ is the marked curve associated to $\pone$ and the points $(P_1,\dots,P_N,Q_1,\dots,Q_M)$ and corresponding formal coordinates.
\begin{definition}
The correlation function \eqref{correlator}
associated to $|\Phi\rangle$, $\langle \Psi |$, $X_1,\dots, X_M$, and distinct points $Q_1,\dots, Q_M\in C\setminus\{P_1,\dots,P_N\}$on a smooth curve $C$ is defined to be the differential form
$$ \langle \widehat{\Psi}|X_1(-1)|0\rangle\tensor X_2(-1)|0 \rangle\tensor\dots \tensor X_M(-1)|0\rangle\tensor |\Phi\rangle d\xi_1\dots d\xi_M.$$
It is shown in ~\cite{Ueno} that the above definition is independent of the chosen coordinates $\xi_1\dots, \xi_M$, and defines an element of $\bigotimes_{a=1}^M\Omega^1_{C,Q_a}$.
\end{definition}
The correlation function \eqref{correlator} has the following properties (cf. Page 70 of ~\cite{Ueno}).
\begin{enumerate}
\item $\Omega$ is linear with respect to $|\Phi\rangle $ and multi-linear in $X_a$'s.
\item If $(X_a,X_b)=0$ where $1\leq a < b\leq M$, then the form $\Omega$ has at most simple poles along the diagonal $Q_a=Q_b$.
\item  The form $\Omega$ has at most simple poles along diagonals of the form $Q_a=P_i$ for all $1\leq a \leq M$ and $1\leq i \leq N$.
\end{enumerate}
\subsection{The extension theorem}\label{marcel}
We will henceforth consider the case $C=\pone$, with a chosen $\infty$ and a coordinate $z$ on $\Bbb{A}^1=\pone-\{\infty\}$.
Consider distinct points $P_1,\dots,P_N\in \Bbb{A}^1\subset \pone$ with $z$-coordinates $z_1,\dots,z_N$ respectively. The standard coordinate $z$ endows each $P_i$ with a formal coordinate. Let $\mathfrak{X}$ be the resulting $N$-pointed curve with formal coordinates.

\begin{definition}\label{gold}
For every positive root $\delta$, make a choice of a non-zero element $f_{\delta}$
in the root space $\frg_{-\delta}$.
\end{definition}

Assume that we are given  $\lambda_1,\dots,\lambda_N\in P_k$, such
that $\mu=\sum_{i=1}^{N} \lambda_i$ is in the root lattice (otherwise $V^{\dagger}_{\vec{\lambda}}(\mathfrak{X})=0$). Write
$\mu=\sum_{p=1}^r n_p \alpha_p$, where $\alpha_p$ are the simple positive roots. It is easy to see that each $n_p$ is a non-negative integer.

Let $|\vec{\lambda}\rangle=|{\lambda}_1\rangle\tensor\dots\tensor
|{\lambda}_N\rangle$ be the tensor product of the corresponding highest weight vectors.
 Now (as in the introduction), consider and fix a  map $\beta:[M]=\{1,\dots,M\} \to R$, so that
$\mu=\sum_{a=1}^M \beta(a)$ with $M=\sum_{p=1}^r n_p$.

From the introduction, recall the variety $X_{\vec{z}}$ its cover $Y_{\vec{z}}$, its compactification $\overline{Y}_{\vec{z}}$
and the master function $\mathcal{R}$ on $Y_{\vec{z}}$. The following is the main result from ~\cite{B}:

Introduce variables $t_1,\dots,t_M$ considered as points on $\pone-\{\infty, P_1,\dots, P_N\}$. For every $\langle\Psi|\in
V^{\dagger}_{\vec{\lambda}}(\mathfrak{X})$, consider the correlation function (an explicit formula is recalled in Section \ref{shriek}).
\begin{equation}
\Omega=\Omega_{\beta}(\langle\Psi|) = \langle\Psi|f_{\beta(1)}(t_1)f_{\beta(2)}(t_2)\dots f_{\beta(M)}(t_M)|\vec{\lambda}\rangle.
\end{equation}
\begin{theorem}~\cite{TRR,B}
\begin{enumerate}
\item The multi-valued meromorphic form $\mathcal{R}\Omega$ on $X_{\vec{z}}$ is square integrable.
\item The differential form $\mathcal{R}q^*(\Omega)$ extends to an everywhere regular, single valued, differential form of the top order on any smooth and projective compactification $\overline{Y}_{\vec{z}}\supseteq Y_{\vec{z}}$.
\item The resulting map
\begin{equation}\label{class}
V^{\dagger}_{\vec{\lambda}}(\mathfrak{X}(\vec{z}))\hookrightarrow H^{M,0}(\overline{Y}_{\vec{z}},\Bbb{C})\subseteq H^M(\overline{Y}_{\vec{z}},\Bbb{C})
\end{equation}
 is injective.
\end{enumerate}
\end{theorem}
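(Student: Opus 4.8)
The plan is to treat the three assertions in turn, with square integrability (1) as the analytic core, the extension statement (2) as a formal consequence of (1), and injectivity (3) as a residue computation in representation theory. First I would make $\Omega=\Omega_\beta(\langle\Psi|)$ explicit: using the mode expansion $f_\delta(t)=\sum_n f_\delta(n)t^{-n-1}$ together with the $\frg(\mathfrak{X})$-invariance of $\langle\Psi|$ (the Ward identities), $\Omega$ is a rational top-form $Q\,dt_1\cdots dt_M$ whose only poles lie along $t_a=z_i$ and $t_a=t_b$; in particular $\Omega$ is regular on $X_{\vec{z}}$, since those divisors are removed. I would also record the elementary identity $\mathcal{R}^{C\kappa}=P=y^{C\kappa}$, which shows that on the cover $p\colon Y_{\vec{z}}\to X_{\vec{z}}$ one has $p^*\mathcal{R}=\zeta\,y$ for a locally constant root of unity $\zeta$; hence $p^*(\mathcal{R}\Omega)=\zeta\,y\,p^*\Omega$ is a single-valued regular top-form on $Y_{\vec{z}}$.

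For (1) I would estimate $|\mathcal{R}\Omega|^2$ along each boundary divisor of a normal-crossings compactification of $X_{\vec{z}}$, the relevant quantity being the net local exponent (master-function exponent minus pole order of $\Omega$), which must exceed $-1$ for local $L^2$-convergence. Along $t_a=z_i$ the master exponent is $(\lambda_i,\beta(a))/\kappa\ge 0$ by dominance of $\lambda_i$, while the operator product expansion shows $f_{\beta(a)}(t_a)|\lambda_i\rangle$ has at worst a simple pole with residue $f_{\beta(a)}\cdot|\lambda_i\rangle$, which vanishes exactly when $(\lambda_i,\beta(a))=0$; the net exponent is thus $>-1$. Along $t_a=t_b$ the master exponent is $-(\beta(a),\beta(b))/\kappa$ and the two-current OPE gives at most a simple pole, present only when $\beta(a)+\beta(b)$ is a root, in which case $(\beta(a),\beta(b))<0$ and the net exponent again exceeds $-1$; when $\beta(a)=\beta(b)$ there is no pole and $(\beta(a),\beta(a))\le 2<\kappa$ whenever $M>0$. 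At infinity the correlation function is $O(t_a^{-2})$ (a current has conformal weight one and $\infty$ carries the vacuum) and a direct computation gives master exponent $(\beta(a),\beta(a))/\kappa$ in $t_a$, so in the coordinate $s=1/t_a$ the form is $\sim s^{-(\beta(a),\beta(a))/\kappa}\,ds$, again $L^2$. The main obstacle is precisely the analysis at the deeper strata, where several $t_a$ collide simultaneously, possibly with some $z_i$: there the leading behaviour is governed by a multi-point OPE whose singularity order is controlled by the integrability of the $\mathcal{H}_{\lambda_i}$, i.e.\ by the level bound $(\lambda_i,\theta)\le k$ entering through $\kappa=k+g^*$. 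I would organize this by factoring the local expansion through the fusion of the colliding currents and bounding the resulting exponent uniformly in the root-system data; this is the step I expect to carry out partly case by case.

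Granting (1), assertion (2) is formal: a square-integrable holomorphic top-form on the smooth quasiprojective $Y_{\vec{z}}$ extends holomorphically across any normal-crossings boundary, since in local coordinates a pole $w^{-m}$ with $m\ge 1$ violates $\int |w|^{-2m}\,\tfrac{i}{2}\,dw\,d\bar w<\infty$. Hence $p^*(\mathcal{R}\Omega)$ defines a global holomorphic $M$-form on $\overline{Y}_{\vec{z}}$, i.e.\ a class in $H^{M,0}(\overline{Y}_{\vec{z}},\Bbb{C})\subseteq H^M(\overline{Y}_{\vec{z}},\Bbb{C})$, independent of the compactification.

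For (3), since a nonzero holomorphic top-form is harmonic and hence nonzero in de Rham cohomology (the inclusion $H^{M,0}\hookrightarrow H^M$ is injective by Hodge theory), the map \eqref{class} is injective as soon as $\langle\Psi|\mapsto\Omega_\beta(\langle\Psi|)$ is. To see the latter I would use that in genus $0$ the natural map $V_{\lambda_1}\tensor\cdots\tensor V_{\lambda_N}\to V_{\vec{\lambda}}(\mathfrak{X})$ is surjective, so $\langle\Psi|$ is a $\frg$-invariant functional on the finite-dimensional tensor product, supported by $\frh$-invariance on the weight-zero subspace. Taking iterated residues of $\Omega_\beta$ at the divisors $t_a=z_i$ recovers $\langle\Psi|$ evaluated on the vectors $f^{(i_1)}_{\beta(1)}\cdots f^{(i_M)}_{\beta(M)}|\vec{\lambda}\rangle$, where $f^{(i)}$ acts in the $i$-th tensor factor; taking in addition residues along the collisions $t_a=t_b$ brings in the non-simple lowering operators $f_{\beta(a)+\beta(b)}$ via the OPE bracket $[f_{\beta(a)},f_{\beta(b)}]$. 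Because $\beta$ realizes $\mu=\sum_p n_p\alpha_p$ with exactly $n_p$ preimages of each $\alpha_p$, these evaluations range over a spanning set of the weight-zero subspace of $V_{\lambda_1}\tensor\cdots\tensor V_{\lambda_N}$ (each factor's weight space is spanned by PBW monomials in the $f_\delta$ with precisely the available multiplicities). A conformal block with $\Omega_\beta(\langle\Psi|)=0$ therefore annihilates the entire weight-zero subspace and hence vanishes, giving injectivity.
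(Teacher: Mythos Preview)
This theorem is not proved in the present paper at all: it is quoted from \cite{TRR,B} (note the citation attached to the theorem environment), and the paper uses it as input. So there is no ``paper's own proof'' to compare against here; what you have written is a sketch of how the argument in \cite{B} goes.

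Your treatment of (2) and of (3) is essentially correct and matches the literature. For (2), the passage from $L^2$ to holomorphic extension across a normal-crossings boundary is standard. For (3), the point is exactly that $\Omega_\beta$ is injective on $(M')^*_0$ (this is the Schechtman--Varchenko isomorphism \eqref{free} in the paper), so a nonzero $\langle\Psi|$ gives a nonzero log-form $\Omega$, hence a nonzero holomorphic section of the canonical bundle on $\overline{Y}_{\vec{z}}$, hence a nonzero class in $H^{M,0}$. Your iterated-residue description is a correct way to see the injectivity of $\Omega_\beta$.

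The genuine gap is in (1), and you have identified it yourself: the codimension-one estimates you wrote down are fine, but the ``deeper strata'' analysis is the entire content of the theorem. In \cite{B} this is \emph{not} done case by case in the Lie-algebra type; it is a uniform argument. One must show that on every stratum of type (S1), (S2), or the stratum at infinity, the logarithmic degree of $\mathcal{R}\Omega$ is strictly positive. The master-function contribution is an explicit quadratic expression in the colliding $\beta(a)$'s and $\lambda_i$'s; the correlation-function contribution is controlled not by a vague ``multi-point OPE'' but by specific representation-theoretic vanishing (highest-weight annihilation, the integrability relation $f_\theta(-1)^{k+1}|\lambda\rangle=0$, and the structure of the Sugawara/Virasoro action). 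The two pieces combine precisely because $\kappa=k+g^*$: for instance on the infinity stratum with $m$ copies of $f_\theta$ colliding, the bound comes out to $\frac{m}{\kappa}(\kappa-m-(g^*-1))$, which is positive exactly for $m\le k$, and the $(k{+}1)$-st term vanishes by integrability. Saying you ``expect to carry this out partly case by case'' is not a proof; the actual argument requires organizing these stratum-by-stratum inequalities and matching each borderline case to a null-vector relation in $\mathcal{H}_{\lambda_i}$. That is the substance of \cite{B}, and your proposal does not yet contain it.
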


Note that by  ~\cite{SV}, the map $V^{\dagger}_{\vec{\lambda}}(\mathfrak{X}(\vec{z}))\hookrightarrow H^{0}(\overline{Y}_{\vec{z}},\Bbb{C})$
is flat for connections as $\vec{z}$ varies in the configuration space of $N$ distinct points on $\pone$ (with the KZ connection on $V^{\dagger}_{\vec{\lambda}}(\mathfrak{X}(\vec{z}))$ and the Gauss-Manin connection on $H^{0}(\overline{Y}_{\vec{z}},\Bbb{C})$).

\section{A review of logarithmic forms}\label{starlog}
 For a smooth algebraic variety $X$ of dimension $M$, there is  complex $\Omega^*_{\log}(X)$ of logarithmic forms on $X$. Let $j:X\to\overline{X}$ be a smooth compactification of $X$ such that the complement $D$ is a divisor with normal crossings. A regular differential $\omega\in H^0(X,\Omega_X^m)$ is said to be logarithmic if it lies in $H^0(\overline{X},\Omega_{\overline{X}}^m(\log D))$, this property does not depend upon the chosen compactification (see below). Recall that the complex $\Omega_{\overline{X}}^*(\log D)$ is the smallest subcomplex   of $j_*\Omega^*_X$
which contains $\Omega^*_{\overline{X}}$,  is stable under exterior products, and such that $df/f$ is a local section of $\Omega^1_{\overline{X}}(\log D)$ on an open subset $U$ whenever $f$ is meromorphic (algebraic) function on $\overline{X}$ which is regular on $X\cap U$.

Locally near a point of $D$ where $D$ is given by $z_1z_2\dots z_k=0$ and $z_1,\dots,z_M$ local coordinates on $\overline{X}$,  an element of $\Omega_{\overline{X}}^m(\log D)$ is a linear combination
$$\sum_I f_I d\eta_{i_1}\wedge\dots\wedge d\eta_{i_m},$$
where the sum is over subsets $I=\{i_1<\dots<i_m\}$ of $\{1,\dots,M\}$ of cardinality $m$, and $d\eta_i=dz_i/z_i$ if $i\leq k$ and $dz_i$ if $i>k$, and $f_I$ is a holomorphic function. Some basic properties are noted below.

\begin{enumerate}
\item $\Omega_{\overline{X}}^m(\log D)$ is a locally free sheaf on $\overline{X}$.
\item Log forms are suitably functorial: If $f:(\overline{X},D')\to (\overline{X},D)$ is a map of pairs as above, then there is an induced map
$\Omega_{\overline{X}}^m(\log D))\to f_*\Omega_{\overline{X}'}^m(\log D')$ and hence on the global sections,
$$H^0(\overline{X},\Omega_{\overline{X}}^m(\log D))\to H^0(\overline{X}',\Omega_{\overline{X}'}^m(\log D')).$$
\item Elements of $H^0(\overline{X},\Omega_{\overline{X}}^m(\log D))$ are $d$-closed for any $m$. The resulting map
\begin{equation}\label{mixedhodge}
H^0(\overline{X},\Omega_{\overline{X}}^m(\log D))\to H^m(X,\Bbb{C})
\end{equation}
is injective (corollaire 3.2.14 in ~\cite{D}).
\item The space of log-forms $\Omega^*_{\log}(X)$ on $X$ as a subspace of $H^0(X,\Omega^*_X) $is well defined, i.e., does not depend upon compactifications.
\end{enumerate}

\subsection{Complements of hyperplane arrangements}
We will restrict now to the case of $X=\Bbb{A}^M - S$ where $S=\cup_{i\in T} H_i$ is a hyperplane arrangement, where $H_i\subseteq \Bbb{A}^M$ is  given by linear equation $f_i=0$.
\begin{lemma}
The space of log forms on $X$ is the differential graded algebra over $\Bbb{C}$ inside the space of meromorphic differentials generated by the forms $df_i/f_i$.
\end{lemma}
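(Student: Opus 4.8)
The plan is to prove both inclusions. One direction is essentially formal: since each $df_i/f_i$ is by definition a section of $\Omega^1_{\overline{X}}(\log D)$ for a suitable normal crossings compactification $(\overline{X},D)$ of $X=\Bbb{A}^M-S$ (take a smooth compactification of $\Bbb{A}^M$, e.g. $\Bbb{P}^M$, and resolve so that the closures of the $H_i$ together with the divisor at infinity form a normal crossings divisor), and since $\Omega^*_{\overline{X}}(\log D)$ is stable under exterior products, every element of the DG-subalgebra generated by the $df_i/f_i$ is a global log-form on $X$. (Closedness under $d$ is automatic: $d(df_i/f_i)=0$, and log-forms are $d$-closed by fact (3) in Section~\ref{starlog}, so the ``DG algebra'' structure is really just the exterior algebra structure.) Thus the generated subalgebra is contained in $\Omega^*_{\log}(X)$.

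For the reverse inclusion I would invoke the Brieskorn/Orlik--Solomon description of the cohomology of a hyperplane arrangement complement. By the Brieskorn lemma, the map from the exterior algebra generated by the classes $[df_i/f_i] \in H^1(X,\Bbb{C})$ onto $H^*(X,\Bbb{C})$ is surjective, and in fact $H^*(X,\Bbb{C})$ is generated in degree one by these classes. On the other hand, $X$ is a smooth affine variety whose cohomology is pure of Hodge--Tate type: $H^m(X,\Bbb{C})$ is pure of weight $2m$ and $H^m(X,\Bbb{C})=F^mH^m(X,\Bbb{C})$. By fact (4) of Section~\ref{starlog}, the image of $H^0(\overline{X},\Omega^m_{\overline{X}}(\log D)) \to H^m(X,\Bbb{C})$ is exactly $F^m$, which here is all of $H^m(X,\Bbb{C})$; and by fact (3) this map is injective. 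Hence every log $m$-form on $X$ represents a cohomology class, and distinct log-forms represent distinct classes.

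Now combine: given $\omega \in \Omega^m_{\log}(X)$, its class $[\omega] \in H^m(X,\Bbb{C})$ lies in the span of products $[df_{i_1}/f_{i_1}]\wedge\cdots\wedge[df_{i_m}/f_{i_m}]$ by Brieskorn. Choose $\omega' $ in the generated subalgebra with $[\omega'] = [\omega]$. Then $\omega-\omega'$ is a log-form whose cohomology class vanishes, so by injectivity of \eqref{mixedhodge} we get $\omega = \omega'$, placing $\omega$ in the generated subalgebra. This proves equality.

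The only genuine point requiring care — and the step I expect to be the main obstacle to write cleanly — is the identification of the image of the log-forms with the \emph{entire} cohomology, i.e. the claim that $F^mH^m(X,\Bbb{C}) = H^m(X,\Bbb{C})$ for $X$ an arrangement complement. This is where the special geometry of arrangements (as opposed to general smooth affine $X$) enters: it follows from Brieskorn's theorem that $H^*(X)$ is generated in degree one together with the fact that $[df_i/f_i]$ is of Hodge type $(1,1)$ (being a residue/dlog class), so that products land in $F^m$; purity then forces $F^m = H^m$. I would state this as a short lemma citing \cite{D} and the Orlik--Solomon/Brieskorn computation rather than reproving it. Everything else is bookkeeping with the functoriality and exterior-algebra properties of $\Omega^*_{\overline{X}}(\log D)$ recalled above.
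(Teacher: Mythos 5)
Your proof is correct and is essentially the paper's argument: Brieskorn's surjectivity of the $\dlog$-generated algebra onto $H^*(X,\Bbb{C})$, plus the injectivity of log forms into cohomology, combined exactly as in your penultimate paragraph. The one step you flag as the ``main obstacle'' --- that $F^mH^m(X,\Bbb{C})=H^m(X,\Bbb{C})$ --- is in fact not needed: your ``combine'' paragraph never uses it, since Brieskorn surjectivity already produces an $\omega'$ in the generated subalgebra with $[\omega']=[\omega]$, and injectivity of the map from log forms to cohomology then forces $\omega=\omega'$ (the purity statement is a consequence of, not a prerequisite for, this sandwich argument).
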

\begin{proof}
Log forms of any degree embed in cohomology, so it suffices to show that the DG algebra over $\Bbb{C}$ inside the space of meromorphic differentials generated by the log forms $df_i/f_i$  maps surjectively into $H^*(X,\Bbb{C})$. This is proved in ~\cite{Brie}.
\end{proof}

\subsection{A criterion for log forms}\label{fatboy}

\begin{definition}
Let $Z$ be an $n$-dimensional smooth algebraic variety, and $\Gamma$ a possibly multi-valued $n$-form of the following form: For every $p\in Z$, there is an analytic open subset $U$ of $Z$ containing $p$, such that $\Gamma$ can be expressed as  $\Gamma=f\omega$ where
\begin{enumerate}
\item
$\omega$ is a (single valued) meromorphic form on $U$.
\item Some positive integer power of $f$ is a (single valued) meromorphic function on $U$.
\end{enumerate}
Let $S\subset Z$ be an irreducible subvariety. We will denote the logarithmic degree of $\Gamma$ along $S$ by $d^{S}(\Gamma)$. (See ~\cite{loo} for some  background on this concept). Briefly: Blow up  $Z$ along $S$, and let $E$ be the exceptional divisor. Then, $d^S(\Gamma)-1$ is the order of vanishing of (any branch of) $\Gamma$ along $E$.
\end{definition}

Consider $X=X_{\vec{z}}$ from the introduction. In a natural manner $X\subseteq (\pone)^M$. The complement $(\pone)^M-X$ is not a divisor with normal crossings, but is locally ``arrangementlike''.
Suppose $\Omega\in H^0(X,\Omega^M_X)$ is regular along the divisors $t_a=\infty$. The following gives a criterion to decide if $\Omega$ is a log form.

Consider the following types of strata $S\subseteq (\pone)^M$:
\begin{enumerate}
\item[(S1)] A certain subset of the $t$'s come together (to an arbitrary moving point). That is
$t_1=t_2=\dots= t_L$ after renumbering (possibly changing $\beta$).
\item[(S2)] A certain subset of the $t$'s come together to one of the $z$'s. That is
$t_1=t_2=\dots= t_L=z_1$ after renumbering (possibly changing $\beta$).
\end{enumerate}

See Section 10.8 in ~\cite{V}, and ~\cite{STV} for the proof of the following proposition.
\begin{proposition}\label{positive}
The following are equivalent:
\begin{enumerate}
\item $\Omega\in \Omega^M_{\log}(X)$.
\item The logarithmic degree of $\Omega$
along each stratum of type (S1), (S2) is $\geq 0$.
\end{enumerate}
\end{proposition}
\subsection{A basis for the space of log-forms}
Consider $X=X_{\vec{z}}$ from the introduction.
\begin{defi}\label{marker}
A marked partition of a finite set $A$ into $N$ parts is a pair $(\vec{\pi},\vec{k})$, where $\vec{k}=(k_1,\dots,k_N)$  is a sequence of non-negative integers such that  $\sum_{j=1}^N k_j=|A|$, and
$\vec{\pi}=(\pi_1,\dots,\pi_N)$ is a sequence of $N$ maps, with $\pi_j:[k_j]\to A$ such that
 \begin{enumerate}
 \item Each $\pi_j$ is injective.
 \item $A$ is the disjoint union of the images of $\pi_j$.
 \end{enumerate}
\end{defi}
To each marked partition of $[M]=\{1,\dots,M\}$ into $N$ parts, we assign  the differential
\begin{equation}\label{differ}
\Omega(\vec{\pi},\vec{k})\ =\  \eta_1\eta_2\dots\eta_N dt_1\wedge\dots\wedge dt_M,
\end{equation}
where $\eta_j=1$ if $k_j=0$ and
$$\eta_j=\frac{1}{(t_{\pi_j(1)}-t_{\pi_j(2)})(t_{\pi_j(2)}-t_{\pi_j(3)})\dots (t_{\pi_j(k_j)}-z_j)}, \text{ for } k_j>0.$$
Note that $\Omega(\vec{\pi},\vec{k})$ is (up to sign) the same as the wedge product
$\omega_1\wedge\omega_2\wedge\dots\wedge\omega_N$ where
$$\omega_j=d\log(t_{\pi_j(1)}-t_{\pi_j(2)}))\wedge d\log(t_{\pi_j(1)}-t_{\pi_j(2)}))\dots \wedge \dlog(t_{\pi_j(k_j)}-z_j),\text{ for } k_j>0,$$
and $\omega_j=1$ if $k_j=0$.
\begin{lemma}\label{todo}
The set of forms $\{\Omega(\vec{\pi},\vec{k})\}$, where  $(\vec{\pi},\vec{k})$ ranges over all marked partitions of $[M]$ with $N$ parts is a basis for the space $\Omega^M_{\log}(X_{\vec{z}})$ of top degree log forms on $X_{\vec{z}}$.
\end{lemma}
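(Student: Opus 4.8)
The plan is to show that the forms $\Omega(\vec{\pi},\vec{k})$ span $\Omega^M_{\log}(X_{\vec{z}})$ and that they are linearly independent. For the spanning statement, I would first reduce to the hyperplane-arrangement description: by the lemma above, $\Omega^M_{\log}(X_{\vec{z}})$ is the degree-$M$ part of the DG subalgebra of meromorphic forms generated by the $\dlog$'s of the defining linear forms $t_a - t_b$ (for $a<b$) and $t_a - z_j$. A top-degree element is therefore a $\Bbb{C}$-linear combination of wedges $\dlog \ell_1 \wedge \dots \wedge \dlog \ell_M$ where each $\ell_i$ is one of these linear forms. The key combinatorial input is the Orlik--Solomon / "broken circuit" type relation: using the standard identity $\dlog(t_a-t_b)\wedge\dlog(t_b-t_c) + \dlog(t_b-t_c)\wedge\dlog(t_c-t_a) + \dlog(t_c-t_a)\wedge\dlog(t_a-t_b) = 0$ (and its analogue when one of the points is a $z_j$), one can rewrite any such wedge so that, for each index $a\in[M]$, the linear form "using $t_a$ for the last time'' points toward some $z_j$; organizing this rewriting by the resulting chains produces exactly the forms $\Omega(\vec\pi,\vec k)$. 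Concretely, each monomial wedge determines a graph on the vertex set $\{t_1,\dots,t_M,z_1,\dots,z_N\}$ with $M$ edges; it is nonzero in the OS algebra only if that graph is a forest, and the three-term relations let us normalize any such forest so that every tree is rooted at a unique $z_j$ and is in fact a path ending at $z_j$ — which is precisely the data $(\vec\pi,\vec k)$.

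For linear independence, I would pass to cohomology: since $\Omega^M_{\log}(X_{\vec z})$ injects into $H^M(X_{\vec z},\Bbb C)$ (fact (3) of the log-forms review), it suffices to show the classes $[\Omega(\vec\pi,\vec k)]$ are independent in cohomology. Here one can use the known Poincaré polynomial of the complement of this ("discriminantal'') arrangement, or argue directly by a residue/iterated-integration argument: applying successive residues along the divisors $t_{\pi_j(k_j)} = z_j$, then $t_{\pi_j(k_j-1)} = t_{\pi_j(k_j)}$, and so on, each $\Omega(\vec\pi,\vec k)$ has residue $\pm 1$ along "its own'' flag of strata and $0$ along the flags attached to the other marked partitions, since the chain structure is rigid. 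This dual pairing with iterated-residue functionals separates the $\Omega(\vec\pi,\vec k)$.

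The main obstacle I expect is the bookkeeping in the spanning step: one must verify that the three-term Arnold relations suffice to reduce an arbitrary forest to a disjoint union of $z$-rooted paths, and that no further relations are needed (i.e. that the count of marked partitions matches the dimension). The cleanest way to close this gap is to combine the surjectivity (which only needs that the $\Omega(\vec\pi,\vec k)$ generate modulo the OS relations) with the independence in cohomology established above, so that a dimension count is not separately required — surjectivity plus independence forces both the spanning and the basis property simultaneously.
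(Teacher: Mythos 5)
Your proposal is correct and takes essentially the same route as the paper: linear independence is established by taking successive residues along the divisors $t_a=z_j$ (a term $\Omega(\vec\pi,\vec k)$ survives exactly when $a=\pi_j(k_j)$), and spanning is proved by encoding a monomial wedge of $\dlog$'s as a graph on the $t_a$'s and $z_j$'s, observing it must be a forest, and using the Arnold/Orlik--Solomon three-term relations (the paper invokes Lemmas 7.4.3 and 7.4.4 of Schechtman--Varchenko) to reduce every tree to a ray rooted at a $z_j$.
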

\begin{proof}
  Given a marked partition $(\vec{\pi},\vec{k})$
 of $[M]$ into $N$ parts, we can form a linear map $\Omega^M_{\log}(X_{\vec{z}})\to \Bbb{C}$ as the composition of the operators,  $$R_j=\Res_{t_{\pi_j(1)}=z_j} \Res_{t_{\pi_j(2)}=z_j}\dots\Res_{t_{\pi_j(1)}=z_j} \Res_{t_{\pi_j(k_j)}=z_j}$$ for $j=1,\dots,N$. Note that the operators $R_j$ commute, and by definition $R_j$ is the identity operator if $k_j=0$.
 The map corresponding to $(\vec{\pi},\vec{k})$ is non-zero on $\Omega(\vec{\pi},\vec{k})$ and vanishes on
  $\Omega(\vec{\pi'},\vec{k'})$ if $(\vec{\pi'},\vec{k'})\neq  (\vec{\pi},\vec{k}).$

 The forms  $\{\Omega(\vec{\pi},\vec{k})\}$ are linearly independent: Given a linear dependence relation, apply the map $\Omega^M_{\log}(X_{\vec{z}})\to \Bbb{C}$ corresponding to $(\vec{\pi},\vec{k})$. This shows that the coefficient of $\Omega(\vec{\pi},\vec{k})$ in the linear dependence relation is zero, as desired.

They span (cf. ~\cite{SV}): Consider a non-zero product of log forms $\eta$ of the form $\dlog(t_a-t_b)$ and $\dlog(t_a-z_j)$. Form a (undirected)  graph with $M+N$ vertices: the vertices are the  variables $t_a, a=1,\dots,M$ and $z_j, j=1,\dots,N$. The edges are the following : join   $t_a$ to $t_b$ if there is a term $\dlog(t_a-t_b)$ in $\eta$,  similarly join $t_a$ to $z_j$ if there is a $\dlog(t_a-z_j)$. There are no edges between a $z_i$ and a $z_j$.

Now note the following vanishing principle: if we have functions $f_1,\dots,f_s$ with  $\sum f_i=c$, $c$ a constant, then
$$d\log f_1\wedge d\log f_2\wedge\dots\wedge d\log f_s=\frac{1}{f_1 f_2\dots f_s}d f_1\wedge df_2\wedge\dots\wedge d f_s =0.$$
It is easy to see from this that the graph produced above does not have any cycles, and is hence a forest.
\begin{enumerate}
\item There is no path connecting a $z_i$ to a $z_j$. This is an immediate consequence of the vanishing principle above with $c=z_i-z_j$.
\item The union of connected components of $z_1,\dots,z_N$ is the entire graph: A connected component with vertices $t_{i_1},\dots,t_{i_{\ell}}$  produces a  $(\ell-1)$ differential. Since $\Omega$ is a differential of the top degree on $X_{\vec{z}}$, this is ruled out.
\item Look at the connected component $C$ of $z_1$, let $k_1+1=|C|$, for simplicity suppose that $C=\{t_1,\dots, t_{k_1},z_1\}$. We would like to trade the product of the differential forms corresponding to the edges of $C$ for a sum of terms (with coefficients) of the form
    \begin{equation}\label{turbine}
    \frac{1}{(t_{\pi(1)}-t_{\pi(2)})(t_{\pi(2)}-t_{\pi(3)})\dots (t_{\pi(k_1)}-z_1)}dt_1\wedge dt_2\wedge\dots\wedge dt_{k_1},
    \end{equation}
    where $\pi$ is a permutation of $[k_1]$.

  This can be achieved by using Lemmas 7.4.3 and 7.4.4 in ~\cite{SV}. Here we indicate an argument which uses the identity
  $$\frac{1}{(u-u_1)(u-u_2)}=\frac{1}{(u-u_1)(u_1-u_2)}+\frac{1}{(u-u_2)(u_2-u_1)}.$$

  Using this repeatedly we can assume (take $u=z_1$) that $z_1$ is connected to exactly one of the $t$'s, say $t_{1}$. We can then use this process to ensure that $t_{1}$ is connected to exactly one of the remaining $t$'s, say $t_{2}$ and continue. At the end, we will have
 a sum (with signs) of terms of the form \eqref{turbine}.

\end{enumerate}

\end{proof}
\subsection{Log forms stable under symmetries}\label{basislogforms}

Suppose we have $\beta$ as in the introduction. The group $S_M$ from the introduction acts on the space of log forms on $X_{\vec{z}}$. We will restrict this action to $\Sigma\subseteq S_M$, where $\Sigma=S_{n_1}\times \dots \times S_{n_r}$ is a product of symmetric groups as in the introduction.

Let $\epsilon$ denote the ``sign" of a permutation and $\Omega^M_{\log}(X_{\vec{z}})^{\epsilon,\Sigma}$ denote the $\epsilon$-character subspace, under the action of $\Sigma$ on $\Omega^M_{\log}(X_{\vec{z}})$. Consider  pairs $(\vec{\delta},\vec{k})$ where $\vec{k}=(k_1,\dots,k_N)$
with $\sum_{i=1}^N k_i=M$, $\vec{\delta}=(\delta_1,\dots,\delta_N)$ with $\delta_j:[k_j]\to R$ (not necessarily injective) and
\begin{equation}\label{DUH}
\sum_{j=1}^N\sum_{s=1}^{k_j}\delta_j(s) =\sum_{a=1}^M \beta(a)=\sum_{j=1}^N \lambda_j,
\end{equation}where $R=\{\alpha_1,\dots, \alpha_r\}$ is the set of positive simple roots of the Lie algebra $\frg$.
(Another way of saying this is that $\sum_{j=1}^N |\delta_j^{-1}(\alpha_p)|=n_p$ for each $p\in [r]$ in the notation from the introduction).
Denote the set of $(\vec{\delta},\vec{k})$ by $\mathcal{B}$.

For a element $(\vec{\delta},\vec{k})\in \mathcal{B}$ define a differential
\begin{equation}\label{cowboy}
\theta(\vec{\delta},\vec{k})=\sum _{{\pi}}\Omega(\vec{\pi},\vec{k}),
\end{equation}
where the sum is over all $\pi$ such that $(\vec{\pi},\vec{k})$ is a marked partition of $[M]$ with $N$ parts and with the constraint that $\beta\circ\pi_j=\delta_j$  for all $1\leq j \leq N$.

\begin{lemma}\label{chopin} The elements $\theta(\vec{\delta},\vec{k})$ for $(\vec{\delta},\vec{k})\in \mathcal{B}$ form a basis
of $\Omega^M_{\log}(X_{\vec{z}})^{\epsilon,\Sigma}$.
\end{lemma}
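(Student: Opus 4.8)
The plan is to turn this into a transparent averaging computation using the basis $\{\Omega(\vec\pi,\vec k)\}$ of $\Omega^M_{\log}(X_{\vec z})$ produced in the previous Lemma. The first step is to make the $\Sigma$-action on this basis explicit. Since $\Omega(\vec\pi,\vec k)=\eta_1\cdots\eta_N\, dt_1\wedge\cdots\wedge dt_M$ with the $\eta_j$ scalar rational functions, an element $\sigma\in S_M$ merely relabels the variables appearing in $\eta_1\cdots\eta_N$ while multiplying $dt_1\wedge\cdots\wedge dt_M$ by $\epsilon(\sigma)$; hence $\sigma^\ast\Omega(\vec\pi,\vec k)=\epsilon(\sigma)\,\Omega(\sigma\cdot\vec\pi,\vec k)$, where $\sigma\cdot\vec\pi$ is the marked partition with parts $\sigma\circ\pi_j$ (the precise side on which $\sigma$ is composed is immaterial). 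Thus $S_M$, and in particular $\Sigma$, permutes the indexing set of marked partitions with a twist by the sign character $\epsilon$.

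The second step is the observation that the $S_M$-stabilizer of any marked partition is trivial: as the maps $\pi_j$ are injective and their images partition $[M]$, the relations $\sigma\circ\pi_j=\pi_j$ force $\sigma=\mathrm{id}$. Consequently every $\Sigma$-orbit of marked partitions (with $\vec k$ fixed) is free, of size $|\Sigma|=\prod_p n_p!$. Applying the projector $e_\epsilon=\tfrac{1}{|\Sigma|}\sum_{\sigma\in\Sigma}\epsilon(\sigma)\,\sigma^\ast$ onto the $\epsilon$-isotypic subspace to a single basis vector then yields $e_\epsilon\Omega(\vec\pi,\vec k)=\tfrac{1}{|\Sigma|}\sum_{\sigma\in\Sigma}\Omega(\sigma\cdot\vec\pi,\vec k)$, the normalized sum over the $\Sigma$-orbit of $\vec\pi$ (the twist by $\epsilon$ in the action cancels the $\epsilon$ in the projector). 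Since the $\Omega(\vec\pi,\vec k)$ span $\Omega^M_{\log}(X_{\vec z})$, their images $e_\epsilon\Omega(\vec\pi,\vec k)$ span the isotypic subspace; as these images are the orbit-sums up to scalars and distinct orbits involve disjoint sets of the linearly independent $\Omega(\vec\pi,\vec k)$, the orbit-sums (one per orbit) form a basis of $\Omega^M_{\log}(X_{\vec z})^{\epsilon,\Sigma}$.

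The third step is to match $\Sigma$-orbits with the index set $\mathcal B$. To an orbit one attaches $(\vec\delta,\vec k)$ with $\delta_j(\ell)$ determined by $\beta(\pi_j(\ell))=\alpha_{\delta_j(\ell)}$; this is well defined on the orbit because $\Sigma$ preserves the fibres of $\beta$. Conversely, for $(\vec\delta,\vec k)\in\mathcal B$ there is a marked partition with $\beta(\pi_j(\ell))=\alpha_{\delta_j(\ell)}$ precisely when, for every simple root $\alpha_p$, the number of slots $(j,\ell)$ with $\delta_j(\ell)=p$ equals $n_p=\#\beta^{-1}(\alpha_p)$; and when this holds the set of such $\vec\pi$ is a single free $\Sigma$-orbit, since the factor $\mathrm{Sym}(\beta^{-1}(\alpha_p))\subseteq\Sigma$ acts simply transitively on bijections from the $p$-labelled slots onto $\beta^{-1}(\alpha_p)$. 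For such $(\vec\delta,\vec k)$ the differential $\theta(\vec\delta,\vec k)$ is exactly the corresponding orbit-sum (equivalently $|\Sigma|\cdot e_\epsilon\Omega(\vec\pi,\vec k)$ for any $\vec\pi$ in the orbit); for the remaining $(\vec\delta,\vec k)$ the defining sum is empty and $\theta(\vec\delta,\vec k)=0$. Discarding these, the nonzero $\theta(\vec\delta,\vec k)$ are precisely the orbit-sums, hence a basis of $\Omega^M_{\log}(X_{\vec z})^{\epsilon,\Sigma}$.

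I do not expect a serious obstacle: all the real content sits in the earlier Lemma giving the basis $\{\Omega(\vec\pi,\vec k)\}$, and the present statement is essentially an orbit count once the $\Sigma$-action is pinned down. The only points needing care are the sign bookkeeping in the action, the triviality of the stabilizers, and the cosmetic remark that the elements of $\mathcal B$ violating the multiplicity condition $\#\{(j,\ell):\delta_j(\ell)=p\}=n_p$ contribute the zero form and should be omitted from the asserted basis.
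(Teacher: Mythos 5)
Your argument is correct, and it is the evident intended one: the paper states Lemma~\ref{chopin} without proof, and the averaging/orbit computation you give (free $\Sigma$-action on the marked partitions, sign twist cancelling against the $\epsilon$-projector, orbits matched to $\mathcal{B}$ via $\beta$) is exactly what underlies the paper's later remark that both $(M')_0^*$ and $\Omega^M_{\log}(X_{\vec{z}})^{\epsilon,\Sigma}$ have bases indexed by $\mathcal{B}$. The one substantive point --- that elements of $\mathcal{B}$ failing the multiplicity condition $\#\{(j,\ell):\delta_j(\ell)=p\}=n_p$ give $\theta(\vec{\delta},\vec{k})=0$ and must be excluded (equivalently, $\mathcal{B}$ should be read as indexing a basis of the weight-zero space $(M')_0$, which imposes exactly this condition) --- is a defect of the statement as literally written, and you have identified and handled it correctly.
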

\begin{proof}
That they span follows from Lemmas \ref{todo} and \ref{stunt}: The $\theta(\vec{\delta},\vec{k})$ are  the $\chi$-averages of the basis $\Omega(\vec{\pi},\vec{k})$ of $\Omega^M_{\log}(X_{\vec{z}})$ with $G=\Sigma$. The linear independence is clear because a  basis vector $\Omega(\vec{\pi},\vec{k})$ appears in \eqref{cowboy} for a unique choice of $(\vec{\delta},\vec{k})$.
\end{proof}
\begin{lemma}\label{stunt}
Suppose a finite group $G$ acts on a finite dimensional complex vector space $V$. Let $\chi:G\to \Bbb{C}^*$ be a one dimensional character of
$G$. Then the  ``$\chi$-averaging" map $T:V\to V$ given by $Tv=\frac{1}{|G|}\sum \chi(g^{-1})gv$ is the projection to the $\chi$-isotypical subspace of $V$.
\end{lemma}

\section{From log forms to representation theory, first steps}
Suppose $R=\{\alpha_1,\dots,\alpha_r\}$ is the set of simple positive roots of $\frg$, and $e_1,\dots,e_r$ and $f_1,\dots, f_r$ be the corresponding elements in $\frg$.
Define a new Lie algebra: $\frg'$ is the Lie algebra with generators $e'_1,\dots,e'_r, f'_1,\dots, f'_r$ and $h\in \frh$ subject to the relations
$$[e'_i,f'_j]=\delta_{ij}h_i,$$
$$[h,e'_i]=\alpha_i(h) e'_i,[h,f'_i]=-\alpha_i(h)f'_i, [h,h']=0,$$
for all $1\leq i,j\leq r$; $h,h'\in\frh$.
Let
\begin{equation}\label{formnij}
n_{ij}=2\frac{(\alpha_i,\alpha_j)}{(\alpha_i,\alpha_i)}.
 \end{equation}
Consider elements  $$\theta_{ij}=\operatorname{ad}(e'_i)^{-n_{ij}+1}e'_j, \theta^-_{ij}=\operatorname{ad}(f'_i)^{-n_{ij}+1}f'_j.$$

There is a natural surjection $\frg'\to \frg$. Write
$$\frg=\mathfrak{n}^-\oplus \frh\oplus\mathfrak{n}, \frg'=\mathfrak{x}\oplus\frh\oplus\mathfrak{y}.$$
Let $\mathfrak{u}$ (resp. $\mathfrak{u}^-$) be the ideal of $\mathfrak{y}$ (resp. $\mathfrak{x}$)  generated by $\theta_{ij}$ (resp. $\theta^-_{ij}$). Then $\mathfrak{n}=\mathfrak{y}/\mathfrak{u}$, and
$\mathfrak{n}^-=\mathfrak{x}/\mathfrak{u}^-$ (see ~\cite{J.-P}).

\begin{defi}
For a dominant integral weight $\lambda$, let $M(\lambda)$ be the corresponding Verma module for the finite dimensional lie algebra $\frg$ with highest weight $|\lambda\rangle$. The corresponding Verma module for $\frg'$ will be denoted by $M'(\lambda)$. Let $V_{\lambda}$, a quotient of $M(\lambda)$, be the corresponding finite dimensional irreducible representation of $\frg$.
\end{defi}

See \cite{J.-P}  for the proof of the following:
\begin{lemma}\label{wash}
\begin{enumerate}
\item $M(\lambda)$ is a naturally isomorphic as an $\mathfrak{n}\oplus\frh$-module to the enveloping algebra $\mathcal{U}(\mathfrak{n}^-)$.
\item There is  an isomorphism of $\mathfrak{x}\oplus\frh$-modules $M'(\lambda)=\mathcal{U}(\mathfrak{x})$.  This isomorphism sends $1\in
\mathcal{U}(\mathfrak{x})$ to the highest weight vector in $M'(\lambda)$.
\item $\mathcal{U}(\mathfrak{x})$ is a free (complex) Lie algebra generated by $f_1',\dots,f_r'$.
\end{enumerate}
\end{lemma}

There is a surjection $M'(\lambda)\to M(\lambda)$, induced from the surjection  $\mathcal{U}(\mathfrak{x})\to \mathcal{U}(\mathfrak{n}^-)$. See ~\cite{J.-P} for the proof of the following proposition.
\begin{proposition}\label{language}
\begin{enumerate}
\item The kernel of $M'(\lambda)\to M(\lambda)$ is spanned by elements of the form $$f'_{i_1}\cdots f'_{i_k} \theta^-_{ij}f'_{j_1}\cdots f'_{j_l}|\lambda\rangle.$$
\item  The kernel $K(\lambda)$ of the natural surjection $M(\lambda)\to V_{\lambda}$ is generated as a $\frg$-module by the elements
$$f_i^{1+\frac{2(\lambda,\alpha_i)}{(\alpha_i,\alpha_i)}}|\lambda\rangle, i=1,\dots,r.$$
\end{enumerate}
\end{proposition}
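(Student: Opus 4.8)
\textbf{Proof plan for Proposition \ref{language}.}

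The statement is essentially structural and is attributed to \cite{J.-P}, so the plan is to reconstruct the standard argument for presentations of Kac-Moody-type nilpotent algebras and then transfer it to Verma modules. For part (1), I would start from the identification $M'(\lambda)\cong\mathcal{U}(\mathfrak{x})$ as $\mathfrak{x}\oplus\frh$-modules, so that the surjection $M'(\lambda)\to M(\lambda)$ is, after applying the highest weight vector, exactly the quotient $\mathcal{U}(\mathfrak{x})\to\mathcal{U}(\mathfrak{n}^-)=\mathcal{U}(\mathfrak{x}/\mathfrak{u}^-)$. Thus the kernel is $\mathfrak{u}^-\cdot\mathcal{U}(\mathfrak{x})\cdot|\lambda\rangle$ (a one-sided statement suffices here because $|\lambda\rangle$ generates). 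Since $\mathfrak{u}^-$ is by definition the ideal of $\mathfrak{x}$ generated by the Serre elements $\theta^-_{ij}$, I would use the PBW-type filtration on $\mathcal{U}(\mathfrak{x})$ together with the fact that $\mathfrak{u}^-$ is spanned by iterated brackets $[f'_{i_1},[f'_{i_2},[\dots,\theta^-_{ij}]\dots]]$ to rewrite any element of $\mathfrak{u}^-\cdot\mathcal{U}(\mathfrak{x})$ as a linear combination of monomials $f'_{i_1}\cdots f'_{i_k}\,\theta^-_{ij}\,f'_{j_1}\cdots f'_{j_l}$: one repeatedly commutes generators past the Serre element, absorbing the correction terms (which again contain a bracket of a generator with $\theta^-_{ij}$, hence lie in $\mathfrak{u}^-$) into the same shape. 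Applying this to $|\lambda\rangle$ gives the claimed spanning set.

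For part (2) I would invoke the classical theorem (due to the defining property of $V(\lambda)$ as the irreducible quotient, combined with the Serre/Kostant description of the maximal submodule of a dominant Verma module) that $K(\lambda)$, the kernel of $M(\lambda)\to V(\lambda)$, is generated as a $\frg$-module — equivalently as a $\mathfrak{n}^-$-module, since $M(\lambda)=\mathcal{U}(\mathfrak{n}^-)|\lambda\rangle$ — by the singular vectors $f_i^{1+m_i}|\lambda\rangle$ with $m_i=2(\lambda,\alpha_i)/(\alpha_i,\alpha_i)$. The key inputs are that each $f_i^{1+m_i}|\lambda\rangle$ is a highest weight (singular) vector of $M(\lambda)$ — a direct $\sll_2$-computation inside the $\sll_2$-triple $(e_i,h_i,f_i)$ acting on $|\lambda\rangle$ — and that the submodule they generate is all of $K(\lambda)$, which one gets from the fact that the quotient $M(\lambda)/\sum_i\mathcal{U}(\frg)f_i^{1+m_i}|\lambda\rangle$ is already integrable, hence finite dimensional, hence equals $V(\lambda)$ by complete reducibility.

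The only real subtlety — and the step I expect to be the main obstacle to write cleanly — is the commutation/normal-ordering argument in part (1): one must be careful that when moving an $f'_k$ from the right of $\theta^-_{ij}$ to the left (or vice versa), the commutator $[f'_k,\theta^-_{ij}]$ is itself expressible within the span of the allowed monomials, and that this rewriting process terminates. Terminating is handled by inducting on the PBW degree and on the position of the ``defect'' $\theta^-_{ij}$ within the monomial; the closure under commutators uses only that $\mathfrak{u}^-$ is an \emph{ideal} of $\mathfrak{x}$, so $[f'_k,\theta^-_{ij}]\in\mathfrak{u}^-$ and can be re-expanded in iterated brackets of generators applied to the Serre elements. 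Everything else is bookkeeping with enveloping algebras and the standard $\sll_2$-theory of dominant weights, and for the final write-up I would simply cite \cite{J.-P} for both parts and include at most a sentence recalling why the kernel has the indicated one-sided (left-ideal-times-$|\lambda\rangle$) form.
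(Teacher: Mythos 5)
Your proposal is correct and matches the argument the paper relies on: the paper gives no proof of Proposition~\ref{language}, deferring entirely to \cite{J.-P}, and your reconstruction is the standard one (for part (1), the kernel of $\mathcal{U}(\mathfrak{x})\to\mathcal{U}(\mathfrak{x}/\mathfrak{u}^-)$ is the two-sided ideal generated by $\mathfrak{u}^-$, equivalently by the $\theta^-_{ij}$; for part (2), the $\sll_2$-check that $f_i^{1+m_i}|\lambda\rangle$ is singular plus integrability and hence finite-dimensionality and irreducibility of the quotient). The one simplification worth noting is that, since the paper already records that $\mathcal{U}(\mathfrak{x})$ is the \emph{free} associative algebra on $f'_1,\dots,f'_r$, the two-sided ideal generated by the $\theta^-_{ij}$ is tautologically spanned by words $f'_{i_1}\cdots f'_{i_k}\,\theta^-_{ij}\,f'_{j_1}\cdots f'_{j_l}$, so the normal-ordering/termination argument you flag as the ``main obstacle'' in part (1) can be bypassed entirely.
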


Observe that for all $j$ (including $j=i$)
$$e_j f_i^{1+\frac{2(\lambda,\alpha_i)}{(\alpha_i,\alpha_i)}}|\lambda\rangle=0\in M(\lambda).$$
For $j\neq i$, this is clear because then $e_j$ commutes with $f_i$. For $j=i$, the computation reduces
to the case of $\mathfrak{sl}_2$. In this case one notes that if $h|\lambda\rangle=m|\lambda\rangle$, then
$ef^{m+1}|\lambda\rangle=(-m + (-m+2)+\dots+ (m-2) +m )|\lambda\rangle=0$.

Proposition ~\ref{language} implies that $K(\lambda)$ is spanned as a complex vector space by the elements of the form (where
$i$, $k$ and $i_1,\dots,i_k$ are arbitrary):
$$f_{i_1}\dots f_{i_k}f_i^{1+\frac{2(\lambda,\alpha_i)}{(\alpha_i,\alpha_i)}}|\lambda\rangle.$$

Writing $K'(\lambda)$ for the kernel of $\mathcal{U}(\mathfrak{x})=M'(\lambda)\to V_{\lambda}$, we see that it is a left $\mathcal{U}(\mathfrak{x})$-module spanned
(as a  $\Bbb{C}$-vector space) by elements of the form

$$f'_{i_1}\cdots f'_{i_k} \theta^-_{ij}f'_{j_1}\cdots f'_{j_l}|\lambda\rangle,\ i\neq j,\ i=1\dots,r,\  j=1,\dots r.$$
\begin{equation}\label{sound}
f'_{i_1}\cdots f'_{i_k}{f'}_i^{1+\frac{2(\lambda,\alpha_i)}{(\alpha_i,\alpha_i)}}|\lambda \rangle,\ i=1,\dots,r.
\end{equation}

\subsection{Tensor products}\label{notatio}
We now place ourselves in the setting of the introduction: $\lambda_1,\dots,\lambda_N$ are dominant integral weights, and $\sum_{a=1}^M\beta(a)=\sum \lambda_i=\sum_{p=1}^r n_p \alpha_p=\mu$.
Let $\widetilde{M}=M(\lambda_1)\tensor\dots\tensor M(\lambda_N)$ and $\widetilde{V}=V_{\lambda_1}\tensor\dots\tensor V_{\lambda_N}$. Similarly let $\widetilde{M}'=M'(\lambda_1)\tensor\dots\tensor M'(\lambda_N)$. There is a natural $\frh$-equivariant surjection $\widetilde{M'}\to \widetilde{M}$.

\begin{remark} The zero weight space of a $\frh$ module $T$ is denoted by $T_0$. For any $\frh$-module $W$ which is a direct sum of $\frh$-weight subspaces, there is a natural isomorphism
$$(W^*)_0\to (W_0)^*.$$
To see this note that an element of $(W^*)_{0}$ acts by zero on all elements of $W_{\nu}$ for all weights $\nu\neq 0$. For the reverse, use the direct sum decomposition $W=\oplus_{\nu}W_{\nu}$.
Note that $\widetilde{M},\widetilde{V}$ and $\widetilde{M}'$ are direct sums of $\frh$-weight subspaces.
\end{remark}

Consider $\widetilde{M'_0}$, the subspace of $\widetilde{M'}$ on which $\mathfrak{h}$ acts trivially (the zero weight space).
\begin{defi}\label{joyM}
For each $(\vec{\delta},\vec{k}) \in \mathcal{B}$ (defined in Section ~\ref{basislogforms}) associate the element $ w(\vec{\delta},\vec{k}):=|w_1\rangle\tensor |w_2\rangle\tensor\dots\tensor |w_N\rangle$
where $$|w_j\rangle= f'_{\delta_j(1)}f'_{\delta_j(2)}\dots f'_{\delta_j(k_j)}|\lambda_j\rangle\in M'(\lambda).$$
\end{defi}
\begin{lemma}
The vectors $w(\vec{\delta},\vec{k})$ for $(\vec{\delta},\vec{k})\in \mathcal{B}$ form a basis of $\widetilde{M'_0}$.
\end{lemma}
\begin{proof}
It follows from Lemma \ref{wash} that  $w(\vec{\delta},\vec{k})$ are linearly independent vectors. Now
 $\mathfrak{h}$ acts on $w(\vec{\delta},\vec{k})$ by weight $0$ (see equality \eqref{DUH}).
This shows that $w(\vec{\delta},\vec{k}) \in \widetilde{M'_0}$.

Now note the general fact: Suppose $V$ is a vector space (possibly infinite dimensional) with an action of $\frh$. Also assume that $V$ has a basis
consisting of eigenvectors for $\frh$. Then any zero weight vector (for $\frh$) in $V$ is a sum of elements of basis vectors which are of zero weight. Applying this to $\widetilde{M'}$ and basis vectors which are arbitrary tensors of vectors of the form  $f'_{\delta_j(1)} f'_{\delta_j(2)}\dots  f'_{\delta_j(k_j)}|\lambda_j\rangle$,
with $\delta_j:[k_j]\to R$ we see that $\widetilde{M'_0}$ is spanned by such vectors with
$$\sum_{j=1}^N\sum_{s=1}^{k_j} \delta_j(s) =\sum_{j=1}^N\lambda_j.$$
This shows that $(\vec{\delta},\vec{k}) \in \mathcal{B}$ (see equality \eqref{DUH}).
\end{proof}
The following is a result from ~\cite{SV}:
\begin{proposition}\label{SVProp}
There is a natural isomorphism
\begin{equation}\label{expli}
\Omega^{SV}_{\beta}:(\widetilde{M'_0})^*\to \Omega^M_{\log}(X_{\vec{z}})^{\epsilon,\Sigma},
\end{equation}
given by the formula (see Definition \ref{joyM})
\begin{equation}\label{free}
\Omega^{SV}_{\beta}(\langle\Psi|)=\sum_{(\vec{\delta},\vec{k})\in\mathcal{B}}\langle\Psi|{w}(\vec{\delta},\vec{k})\rangle \wedge\theta(\vec{\delta},\vec{k}).
\end{equation}

\end{proposition}
\begin{proof}
$\widetilde{M'_0}$ and $\Omega^M_{\log}(X_{\vec{z}})^{\epsilon,\Sigma}$ each have basis parameterized by $\mathcal{B}$ (see Section ~\ref{basislogforms}). The mapping $\Omega^{SV}_{\beta}$  sends the basis dual to the chosen basis of $\widetilde{M'_0}$ to the corresponding basis element of $\Omega^M_{\log}(X_{\vec{z}})^{\epsilon,\Sigma}$ and is hence an isomorphism.
\end{proof}
\begin{remark}
In ~\cite{SV}, Schechtman and Varchenko relate the Lie algebra homology of free Lie algebras to the cohomology (with local coefficients) of certain configuration spaces. The isomorphism
~\eqref{expli} is a particular case of their work (compare with $(7.2.4)$ and Section $7.1$ in \cite{SV}).
\end{remark}
\begin{remark}
It is easy to see that
\begin{equation}\label{morning}
\Omega^{SV}_{\beta}(\langle\Psi|)=\sum_{(\vec{\pi},\vec{k})}\langle\Psi|\vec{w}({\beta\circ\vec{\pi}},\vec{k})\rangle \wedge\Omega(\vec{\pi},\vec{k}),
\end{equation}
where $(\vec{\pi},\vec{k})$ vary over all marked partitions of $[M]$ into $N$ parts, and $\beta\circ\vec{\pi}\in\mathcal{B}$ is the element
$(\vec{\delta},\vec{k})$ with $\delta_j=\beta\circ\pi_j$ $j=1,\dots,N$.

\end{remark}

\subsection{Correlation functions and the Schechtman-Varchenko isomorphism}\label{shriek}
In the setting of Section \ref{marcel} (and using notation from Section \ref{notatio}), let $\langle\Psi|\in
V^{\dagger}_{\vec{\lambda}}(\mathfrak{X})$.

Note that $$V^{\dagger}_{\vec{\lambda}}(\mathfrak{X}(\vec{z}))\subseteq ((\widetilde{V})^*)^{\frg}\subseteq (\widetilde{M'_0})^*.$$ Therefore we can consider $\langle\Psi|$ as an element of $(\widetilde{M'_0})^*$ and apply the Schechtman-Varchenko morphism \eqref{free} to it. On the other hand, we have the correlation function $\Omega_{\beta}(\langle\Psi|)$ from Section \ref{marcel}. These coincide as will be shown below (Proposition \ref{DLH}):

\begin{equation}\label{machine}
\Omega^{SV}_{\beta}(\langle\Psi|) = \Omega_{\beta}(\langle\Psi|) =\langle\Psi|f_{\beta(1)}(t_1)f_{\beta(2)}(t_2)\dots f_{\beta(M)}(t_M)|\vec{\lambda}\rangle.
\end{equation}

The equality \eqref{machine} (see \eqref{expli}) should be viewed as an explicit formula for the correlation function
$$\langle\Psi|f_{\beta(1)}(t_1)f_{\beta(2)}(t_2)\dots f_{\beta(M)}(t_M)|\vec{\lambda}\rangle.$$

Suppose,
\begin{enumerate}
\item $|\vec{v}\rangle=|v_1\rangle\tensor\dots\tensor|v_N\rangle$ where each $|v_i\rangle \in V_{\lambda_i}$.
\item $X:A\subseteq[M]\to\mathfrak{n}^-$.
\item $\langle\Psi|\in
V^{\dagger}_{\vec{\lambda}}(\mathfrak{X})$.
\end{enumerate}
There is then an  explicit formula (cf. \cite{ATY}) for $$\langle{\Psi}|\prod_{a\in A} X_a(t_a)|\vec{\lambda}\rangle,$$ which generalizes \eqref{machine}. It is expressed as
a sum over marked partitions $(\vec{\pi},\vec{k})$, of $A$ into $N$ parts (see Definition \ref{marker}).
\begin{proposition}\label{DLH}
\begin{equation}\label{mav}
\langle{\Psi}|\prod_{a\in A} X_a(t_a)|\vec{\lambda}\rangle=\sum_{(\vec{\pi},\vec{k})} \frac{\langle\Psi|\bigl(\tensor_{j=1}^N X_{\pi_j(1)}X_{\pi_j(2)}\dots X_{\pi_j(k_j)} |v_j\rangle\bigr)}{\prod_{j=1}^N\bigl((t_{\pi_j(1)}-t_{\pi_j(2)})(t_{\pi_j(2)}-t_{\pi_j(3)})\dots (t_{\pi_j(k_j)}-z_j)\bigr)}\wedge_{a\in A} dt_a,
\end{equation}
where the sum runs through all marked partitions $(\vec{\pi},\vec{k})$ of $A$ into $N$ parts.\footnote{In the above expression $\bigl(\tensor_{j=1}^N X_{\pi_j(1)}X_{\pi_j(2)}\dots X_{\pi_j(k_j)} |v_j\rangle\bigr)\in V_{\lambda_1}\tensor V_{\lambda_2}\tensor\dots\tensor V_{\lambda_N}$, and the
product of differentials is taken in the order of $[M]$: if A$=\{a_1<\dots<a_s\}$ then $\wedge_{a\in A} dt_a=dt_{a_1}\wedge dt_{a_2}\wedge\dots\wedge dt_{a_s}$.}
\end{proposition}

\begin{proof}
The proof is by induction on $s=|A|$. Assume $A=\{t_1,\dots,t_s\}$ without any loss  of generality.
Let $\Theta=\langle{\Psi}| X_1(t_1)X_2(t_2)\dots X_s(t_s)|\vec{\lambda}\rangle$. If $s=1$, then the result is clear: start with
$\Omega=\langle{\Psi}|X_1(t)|\vec{\nu}\rangle$
Now use the function $\frac{1}{z-t}$ and the gauge condition (cf Page 70, ~\cite{Ueno}) to write
$$\Theta=\sum_{i=1}^N \frac{1}{t_1-z_i}\langle{\Psi}|\rho_i(X_1)|\vec{\nu}\rangle dt_1.$$

For $s>1$, let us write $\Theta=f_{\Theta}(t_1,\dots, t_s) d\vec{t}$ with $d\vec{t}=dt_1dt_2\dots dt_s$. We want to show that $f_{\Theta}$ equals the right hand side of  ~\eqref{mav} divided by $d\vec{t}$ (we do this to get rid of the   non-commuting $dt_1,\dots, dt_s$). We will show that both sides of the desired equality are equal as functions of $t_1$. It is easy to see that both sides vanish at infinity. We need to show that they have equal polar parts at every finite point. Therefore, we need to analyze the behavior as
\begin{enumerate}
\item $t_1 $ approaches $z_i$: Let $i=1$ for simplicity. The polar part of $\Theta$ is $\frac{1}{t_1-z_1}f_{\tilde{\Theta}}$ corresponding to a correlation function with variables $t_2,\dots, t_M$ (same $X$'s) with $|\nu_1\rangle$ changed to
$X_1 |\nu_1\rangle$. On the right hand side we need to consider only terms which have a fraction $\frac{1}{t_1-z_1}$. A little thought convinces us that
the equality of the polar parts at $t_1=z_1$ follows from induction.
\item $t_1$ approaches $t_a$. In this case the polar part  of $f_{\Theta}$  is $\frac{1}{t_1-t_a}f_{\tilde{\Theta}}$ corresponding to a correlation function  with points $t_2,\dots, t_M$, with $f_{\beta(a)}$ replaced by $[X_1,X_a]$. On the other side we should be looking at terms which have a $t_1-t_a$ or $t_a-t_1$ in the denominator. Firstly all partitions considered should have $t_1$ and $t_a$ in the same part. So we are looking at words which have $X_1X_a$ or $X_aX_1$ as sub words. We use the formula

    $$\alpha(t_1)\beta(t_a) X_1 X_a - \alpha(t_a) \beta(t_1)X_a X_1 =  \alpha(t_1)\beta(t_a)[X_1,X_a]+ O(t_1-t_a). $$
\end{enumerate}
\end{proof}

\section{The main theorems}
As stated in the introduction, our proof of Theorem ~\ref{main} is broken into two parts.
\begin{theorem}\label{Main1}
Assume $\frg$ is classical, or $G_2$.
Suppose  $\omega\in (H^{M,0}(\overline{Y}_{\vec{z}},\Bbb{C}))^\chi$. On ${Y}_{\vec{z}}$, express $\omega$ as a differential form $\mathcal{R}q^*\Omega$ where $q:Y_{\vec{z}}\to X_{\vec{z}}$ is the covering map. Then,
$\Omega$ is a log-form on $X_{\vec{z}}$.
\end{theorem}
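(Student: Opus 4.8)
The plan is to exploit the criterion of Proposition~\ref{positive}: to show that $\Omega$ is a log-form on $X_{\vec{z}}$, it suffices (once we know $\Omega$ is regular along the divisors $t_a=\infty$, which follows from the explicit shape of the correlation function or more precisely from $\chi$-invariance constraining the leading behaviour at $\infty$) to bound from below the logarithmic order of $\Omega$ along each stratum of type (S1) and (S2). So the whole argument is local along such a stratum $S$. Near $S$ one introduces adapted coordinates: if $t_1=\dots=t_L$ (case (S1)) or $t_1=\dots=t_L=z_1$ (case (S2)), write the coalescing cluster in terms of a ``center'' variable and $L-1$ (resp. $L$) relative variables $s_1,\dots$ measuring the mutual differences, together with the remaining $t$'s; then $\Omega = Q\, dt_1\wedge\dots\wedge dt_M$ becomes, after this coordinate change, $\tilde Q\, ds_1\wedge\dots\wedge d(\text{center})\wedge\dots$, and one must show $\tilde Q$ has a pole of order at most matching the normal-crossing divisors contributed by the $s_i$, i.e.\ the logarithmic order is $\ge 0$. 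The hypothesis available is that $\omega=\mathcal{R}p^*\Omega$ extends to a smooth projective compactification, equivalently (as recalled after Theorem~\ref{main}) that $\mathcal{R}\Omega$ is square-integrable on $X_{\vec{z}}$; combined with the known local exponents of $\mathcal{R}$ along $S$ (the exponent is a rational number built from $(\beta(a),\beta(b))/\kappa$ resp.\ $(\lambda_1,\sum\beta(a))/\kappa$ over the cluster), square-integrability of $\mathcal{R}\Omega$ forces an upper bound on the order of pole of $\Omega$ along $S$ that is strictly better than ``arbitrary'', but a priori not quite $\ge 0$ — and closing exactly that gap is where the work lies.

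The key extra input, and the reason the symmetry of $Q$ under $\Sigma$ is invoked in the statement of Theorem~\ref{Main1}, is that after restricting to the cluster $t_1,\dots,t_L$ the form $\Omega$ must, by that symmetry, be skew-symmetric in the subset of those variables sharing a common value of $\beta$. Skew-symmetry kills the low-order terms of the Taylor/Laurent expansion of $\tilde Q$ in the relative variables $s_i$: a function skew in $s_i-s_j$ vanishes on the diagonals, which improves the naive square-integrability bound by exactly the discrepancy. Concretely one expands $\tilde Q$ in the relative coordinates, uses skew-symmetry (in the $\beta$-blocks) to discard the terms that would violate $\ge 0$, and checks that the surviving terms are precisely those allowed in a log-form; what makes this \emph{case-by-case for classical $\frg$ and $G_2$} is that the numerology of the exponents $(\beta(a),\beta(b))$ and $(\lambda_1,\beta(a))$ — i.e.\ which inner products of simple roots and of $\lambda_i$ with simple roots actually occur, and by how much the evenness and divisibility assumptions move the exponents — depends on the Cartan matrix, so the elementary inequality ``square-integrable $+$ skew-symmetric $\Rightarrow$ log-order $\ge 0$'' has to be verified root system by root system, and for $F_4$, $E_6$, $E_7$, $E_8$ the bookkeeping degenerates (cf.\ Remark~\ref{f4remark}).

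So the steps, in order, are: (1) reduce, via Proposition~\ref{positive}, to a local statement along strata (S1) and (S2), having first disposed of regularity at $t_a=\infty$; (2) choose cluster/relative coordinates along such a stratum and translate ``$\Omega$ is log along $S$'' into ``$\tilde Q$ has pole order $\le$ (number of relative $s_i$) with only the log-allowed monomials,'' i.e.\ logarithmic order $\ge 0$; (3) compute the local exponent of the master function $\mathcal{R}$ along $S$ in terms of the relevant inner products; (4) invoke square-integrability of $\mathcal{R}\Omega$ to get the a priori bound on the pole order of $\Omega$; (5) invoke $\Sigma$-skew-symmetry to upgrade this bound by the requisite amount, doing the arithmetic of root inner products case by case for the classical series ($A_r$, $B_r$, $C_r$, $D_r$) and $G_2$. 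The main obstacle is step (5): it is exactly the point where the argument stops being formal, where the classical/$G_2$ hypothesis is used, and where one must check that the combination of the evenness assumption~\eqref{snl2}, the divisibility of $C$, the square-integrability bound, and the skew-symmetry gap fit together to give $\ge 0$ rather than something slightly negative.
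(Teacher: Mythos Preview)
Your framework through step (4) is right, but step (5) misidentifies the mechanism and the argument as written does not close. The function $Q$ is \emph{symmetric} (not skew) in same-colour variables --- the $\epsilon$-equivariance of $\Omega=Q\,dt_1\wedge\dots\wedge dt_M$ under $\Sigma$ comes entirely from the $dt_a$'s anticommuting, which forces $Q$ itself to be $\Sigma$-invariant --- and a symmetric function has no reason to vanish on diagonals, so the claim ``a function skew in $s_i-s_j$ vanishes on the diagonals, which improves the naive square-integrability bound by exactly the discrepancy'' is simply unavailable. And the discrepancy is real: already for $\frg=\mathfrak{sl}(2)$ on the (S2)-stratum $t_1=t_2=z_1$ with $(\lambda_1,\alpha)=k$, the inequality $d^S(\mathcal{R}\Omega)>0$ gives only $d^S(\Omega)>(2-2k)/(k+2)$, permitting $d^S(\Omega)=-1$ for large $k$. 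What actually yields $d^S(\Omega)\ge 0$ there is not symmetry at all but the pole structure forced by square-integrability along the \emph{codimension-one} divisors (Lemma~\ref{observation}(1)--(4)): $\Omega$ has at most simple poles at $t_a=z_1$ and no pole at $t_1=t_2$, from which $d^S(\Omega)\ge 0$ follows by inspection.

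In higher rank the paper's argument is not a root-arithmetic inequality. Square-integrability on a carefully chosen family of \emph{auxiliary} strata (Lemma~\ref{observation}(5),(6)) forces the corrected $\widetilde\Omega$ to vanish on certain partial diagonals (e.g.\ where $m$ variables of one colour and one of an adjacent colour coincide); these vanishing constraints, together with the $\Sigma$-symmetry, then feed an iterated-residue scheme (Theorem~\ref{controlpoles}, Proposition~\ref{v8}): one residuates along poles appearing in the lowest-degree term, preserving $d^S$ at each step (Lemma~\ref{logeq}), until no poles remain in the lowest-degree term and $d^S\ge 0$ is manifest. The case-by-case content is proving that these iterated residues stay under control (at most simple poles, with the running sum of colours a positive root), and this reduces to a sequence of elementary polynomial lemmas (e.g.\ Lemmas~\ref{bansun}, \ref{mindeg1}, \ref{mindegk+1}, \ref{deg3}) of the shape ``a polynomial symmetric in prescribed blocks and vanishing on prescribed partial diagonals has total degree $\ge d$''. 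What varies from type to type is which partial-diagonal vanishings are supplied by Lemma~\ref{observation}(5),(6) --- this is governed by the Cartan integers $-n_{ij}$, i.e.\ by the Dynkin diagram --- not, as you suggest, the arithmetic of the exponents of $\mathcal{R}$ combined with the evenness/divisibility of $C$ (those constants play no role in Theorem~\ref{Main1}).
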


From Theorem  ~\ref{Main1} and the Schechtman-Varchenko isomorphism ~\eqref{free} we can write any
$\omega\in (H^{M,0}(\overline{Y}_{\vec{z}},\Bbb{C}))^\chi$ in the form

$$\omega=\mathcal{R}q^*\Omega_{\beta}^{SV}(\langle\Psi|),$$
for some $\langle\Psi|\in (\widetilde{M_0'})^*$. Therefore Theorem ~\ref{main} will follow from the following:

\begin{theorem}\label{cook} Suppose $\langle\Psi|\in (\widetilde{M'_0})^*$ is such that $\mathcal{R}q^*\Omega_{\beta}^{SV}(\langle\Psi|)$   extends to (any) compactification of  $Y_{\vec{z}}$ (or equivalently that $\mathcal{R}\Omega_{\beta}^{SV}(\langle\Psi|)$ is a multivalued, square integrable form on $X_{\vec{z}}$). Then, $\langle\Psi|$ lies in the subspace $V^{\dagger}_{\vec{\lambda}}(\mathfrak{X}(\vec{z}))\subseteq \bigr( (V_{\lambda_1}\tensor\dots\tensor V_{\lambda_N})^*\bigr)^{\frg}\subseteq (\widetilde{M'_0})^*$ .
\end{theorem}
\begin{remark}
Note that $$\bigr( (V_{\lambda_1}\tensor\dots\tensor V_{\lambda_N})^*\bigr)^{\frg}\subseteq \bigr( (V_{\lambda_1}\tensor\dots\tensor V_{\lambda_N})^*\bigr)_{0}=((V_{\lambda_1}\tensor\dots\tensor V_{\lambda_N})_0)^* \subseteq (\widetilde{M}_0)^*\subseteq (\widetilde{M'_0})^*.$$
\end{remark}

We will prove Theorem ~\ref{cook} first, and return to the proof of Theorem ~\ref{Main1} in Section ~\ref{exam534}.


\begin{remark}\label{VarLoo}
Corollary 8.3 from ~\cite{loo} seems to imply Theorem ~\ref{Main1} immediately without restrictions on $\frg$. However,
we have not been able to follow the proof of this result from ~\cite{loo}. 
\end{remark}

\section{Proof of Theorem ~\ref{cook}}
Generalizing the considerations of Section \ref{shriek} we introduce more general ``correlation type'' functions: Suppose
\begin{enumerate}
\item $|\vec{v}\rangle=|v_1\rangle\tensor\dots\tensor|v_N\rangle$ where each $|v_i\rangle \in M'(\lambda_i)$.
\item $X:A\subseteq[M]\to \frg'$ with $X_a\in \mathfrak{x}$.
\item $\langle\Psi|\in (\widetilde{M'_0})^*$.
\end{enumerate}
\begin{definition}\label{aty}
Define $\langle\Psi|\prod_{a\in A} X_a(t_a)|\vec{v}\rangle$ to be
\begin{equation}
\sum_{(\vec{\pi},\vec{k})} \frac{\langle\Psi|\bigl(\tensor_{j=1}^N X_{\pi_j(1)}X_{\pi_j(2)}\dots X_{\pi_j(k_j)} |v_j\rangle\bigr)}{\prod_{j=1}^N\bigl((t_{\pi_j(1)}-t_{\pi_j(2)})(t_{\pi_j(2)}-t_{\pi_j(3)})\dots (t_{\pi_j(k_j)}-z_j)\bigr)}\wedge_{a\in A} dt_a,
\end{equation}
where the sum runs through all marked partitions $(\vec{\pi},\vec{k})$ of $A$ into $N$ parts as in Section \ref{shriek} (see Definition \ref{marker}).
\end{definition}

\begin{remark}
The authors do  not know if Definition \ref{aty} is a correlation function in conformal field theory (for non-integrable representations), also see equations (B4) and (B5) in ~\cite{ATY}.
\end{remark}
\begin{remark}\label{remarkable}
 Note that we can form a similar definition with $|v_i\rangle \in M(\lambda_i)$, $X:A\subseteq[M]\to \frg$ with $X_a\in \mathfrak{n}^-$, and  $\langle\Psi|\in (\widetilde{M_0})^*$ (these are objects for $\frg$ and not $\frg'$). These definitions are compatible: When $\langle\Psi|\in (\widetilde{M_0})^*$ and $X_a\in \mathfrak{x}$ one can project $X_a$ to $\mathfrak{n}^-$, and take the image of
  $\langle\Psi|$ in $(\widetilde{M'_0})^*$ and get two correlations functions, which coincide.
\end{remark}

\subsection{Residue formulas}\label{formulalist}
 If $\Omega$ is a meromorphic $M$-form on an algebraic variety $Y$ which presents at most simple poles along a smooth divisor $D$, define a meromorphic $(M-1)$ form $\Res_D \Omega$, on $D$ by
$$\Omega= \Omega'\wedge \frac{df}{f},$$
$$\Res_D \Omega=\Omega'|D,$$
 where $f$ is a local defining equation for $D$.
\begin{remark}
We need examine the  residues of $\Omega_{\beta}^{SV}(\langle\Psi|)$ to prove Theorem ~\ref{cook}. Since $\Omega_{\beta}^{SV}(\langle \Psi|)$ may not come from a correlation function, we need to work with the formula in Definition \ref{aty}.
\end{remark}
The locus $t_a=t_b$ in $\Bbb{C}^A$ will be parameterized by $\Bbb{C}^{A'}$ where $A'=A-\{a\}$ assuming $b<a$ (``keep the smaller variable''). The locus $t_a=z_j$, or $t_a=0$ (or $t_a=\infty$ in $(\pone)^A$) will be parameterized by $\Bbb{C}^{A'}$ (or $(\pone)^{A'}$) where $A'=A-\{a\}$. The form $\langle \Psi|\prod_{a\in A}X_a(t_a)|\vec{v}\rangle$ satisfies the following properties (recall that, by definition, $\langle\Psi|\in (\widetilde{M'_0})^*$):

\begin{enumerate}
\item $$\langle\Psi|\prod_{a\in A} X_a(t_a)|\vec{v}\rangle$$ is symmetric in $t_a$ and $t_b$ (up-to sign)
if $X_a=X_b$.

\item The residue of
$$\langle\Psi|\prod_{a\in A} X_a(t_a)|\vec{v}\rangle$$
along $t_a=t_b$ with $b<a$ is a similar function (up to sign)
$$\langle\Psi|\prod_{c\in A'} X'_c(t_c)|\vec{v}\rangle$$
with $A'=A-\{a\}$ and $X'_b=[X_a,X_b]$ (and $X'_c=X_c$ for $c\not\in \{a,b\}$).

\item The residue of
$$\langle\Psi|\prod_{a\in A} X_a(t_a)|\vec{v}\rangle$$
along $t_a=z_j$  is a similar function (up to sign)
$$\langle\Psi|\prod_{c\in A'} X'_c(t_c)|\vec{v'}\rangle$$
with $A'=A-\{a\}$ and $X'_c=X_c,\ \forall c\in A'$ and
$|v'_i\rangle=|v_i\rangle$ for $i\neq j$ and $|v'_j\rangle =X_a |v_j\rangle$.
\end{enumerate}
\begin{remark}\label{remarkable2}
There are similar formulas in the setting of Remark \ref{remarkable} (where one is looking at objects for $\frg$ rather than $\frg'$).
\end{remark}
\begin{remark}
There are similar properties for correlation functions in the theory of conformal blocks ~\cite{Ueno}.
\end{remark}

\subsection{Square integrability}

Suppose $\langle\Psi|\in (\widetilde{M'_0})^*$ is such that $\mathcal{R}q^*\Omega_{\beta}^{SV}(\langle\Psi|)$ is square integrable.
The first observation is that $\Omega=\Omega_{\beta}^{SV}(\langle \Psi|)$ is regular at the generic point of each of the divisors $t_a=\infty$. This is because
the degree of the function $\mathcal{R}$ along the divisor $t_a=\infty$ is negative ($=-(\beta(a),\beta(a))/\kappa$), therefore the logarithmic degree of $\Omega$ along this stratum is
$\geq 1$ which implies that $\Omega$ is regular along the divisors $t_a=\infty$.

We write $\Omega_{\beta}^{SV}(\langle \Psi|)$ as a ``correlation type'' function in the sense of Definition \ref{aty} (Prop \ref{SVProp} and Remark \ref{morning}):
\begin{equation}\label{quietus}
\Omega^{SV}_{\beta}(\langle\Psi|) = \langle\Psi|f'_{\beta(1)}(t_1)f'_{\beta(2)}(t_2)\dots f'_{\beta(M)}(t_M)|\vec{\lambda}\rangle.
\end{equation}
The formulas of the previous section allow us to consider suitable residues of the right hand side of \eqref{quietus} as ``correlation type''  functions.

Now we begin to probe the square integrability assumptions along deeper strata. For simplicity, back in the original situation assume that $\beta(2)=\beta(3)=\dots =\beta(-n_{ij}+2)=\alpha_i$
and $\beta(1)=\alpha_j$ (see \eqref{formnij}).

By Lemma ~\ref{observation} (5), we know that $\Res_{t_2=t_1}\Res_{t_3=t_1}\dots\Res_{t_{-n_{ij}+2}=t_1} \Omega=0$. This implies that
\begin{equation}\label{bourbaki}
\langle\Psi| \operatorname{ad}(f'_{\alpha_i})^{-n_{ij}+1}(f'_{\alpha_j})(t_1)\prod_{a>-n_{ij}+2}f'_{\beta(a)}(t_a)|\vec{\lambda}\rangle=0.
\end{equation}

\subsection{Proof of Theorem ~\ref{cook}, Part I}
Under the square integrability hypothesis we first prove $\langle\Psi|\in (\widetilde{M_0})^*\subseteq (\widetilde{M_0'})^*$.

The above formulas (formula ~\eqref{bourbaki} and Section~\ref{formulalist}) shows that $\langle\Psi|$ vanishes on any tensor $|w_1\rangle\tensor\dots\tensor|w_N\rangle$ where {\em some} $|w_a\rangle$
is of the form $$f'_{a_1}\dots f'_{a_k}\operatorname{ad}(f'_{\alpha_i})^{-n_{ij}+1}(f'_{\alpha_j})\dots f'_{a_{k+1}}\dots f'_{a_s}|\lambda_a\rangle$$

(we need consider only the case $|w_1\rangle\tensor\dots\tensor |w_N\rangle\in \widetilde{M_0}$, and we can use the description of $\widetilde{M},\widetilde{M'}$ in terms
of universal enveloping algebras, see Proposition ~\ref{language}).

\subsection{Proof of Theorem ~\ref{cook}, Part II}
We can now view $\Omega$ as a ``correlation-type''  object for $\frg$ via \eqref{quietus}, see Remark \ref{remarkable} and Remark \ref{remarkable2}.
Under the square integrability hypothesis we prove $\langle\Psi|\in (\widetilde{V_0})^*\subseteq (\widetilde{M_0})^*$, where $\widetilde{V}=V_{\lambda_1}\otimes \dots \otimes V_{\lambda_N}$.

Let $n=\frac{2(\lambda_j,\alpha_p)}{(\alpha_p,\alpha_p)}$
 and for simplicity, back in the original situation assume that $\beta(1)=\beta(2)=\dots =\beta(n+1)=\alpha_p$. By Lemma ~\ref{observation} (5), we know that $\Res_{t_1=z_j}\Res_{t_2=z_j}\dots\Res_{t_{n+1}=z_j} \Omega=0$. The rest of the argument is as in Part I (see Expression ~\eqref{sound}).
\subsection{Proof of Theorem ~\ref{cook}, Part III}

Under the square integrability assumption
$\langle\Psi|\in (\widetilde{V}^*)^{\frg}\subseteq (\widetilde{V_0})^*$:

We will now show that $f_i\langle\Psi|=0$ for all simple roots $\alpha_i$. To show this let $\beta(1)=\alpha_i$. Take residues at $t_2,\dots,t_M$ at $z_1,z_2,\dots, z_N$ (in all possible ways). One gets a differential
form in $t_1$ alone. The sum of its residues is zero (non-zero residues are possible only at $z_1,\dots,z_N$). This yields
$f_i\langle\Psi|=0$.

It follows that $\langle\Psi|\in (\widetilde{V}^*)^{\frg}$ (To show that $e_i \langle\Psi|=0$ for all $i$, we reduce to the case of $\mathfrak{sl}(2)$. It is then easy to see that the elements $e^m\langle\Psi|$, where $e=e_1$, generate a $\frg$-submodule of $(\widetilde{V}_0)^*$, all of whose weights are non-negative, the symmetry of weights forces these weights to be zero and hence $e\langle\Psi|=0$).

\subsection{Proof of Theorem ~\ref{cook}, Part IV}
It is  known that $\langle\Psi|\in (\widetilde{V}^*)^{\frg}$ lies in $V^{\dagger}_{\vec{\lambda}}(\mathfrak{X})$
if and only if
$$\langle\Psi| T^{k+1}|\vec v\rangle =0,\ \forall\ |\vec{v}\rangle\in V_{\lambda_1}\tensor\cdots \tensor V_{\lambda_N},$$
where
$$T:V_{\lambda_1}\tensor\cdots\tensor V_{\lambda_N}\to V_{\lambda_1}\tensor\cdots\tensor V_{\lambda_N}$$ is given by the formula
is the operator $\sum_{i=1}^N z_if^{(i)}_{\theta}$ with $f^{(i)}_{\theta}$ acting on the $i$th position of a tensor
product, see ~\cite{bea,FSV} (note that it is immaterial whether we choose $f_{\theta}$ or $e_{\theta}$.)

Suppose $\langle\Psi|\in (\widetilde{V}^*)^{\frg}$ is such that $\mathcal{R}\Omega$ is square integrable. We will now show that
$\langle\Psi|$ is actually in $V^{\dagger}_{\vec{\lambda}}(\mathfrak{X})$. Our task therefore, considering the previous paragraph, is to show that for any maps $\delta_j:[l_j]\to [r]$ for $j=1,\dots,N$, defining
\begin{equation}\label{taste}
|v_j\rangle= f'_{\delta_j(1)}\dots \tensor f'_{\delta_j(k_j)}|\lambda_j\rangle\in V_{\lambda},
\end{equation}
one has
\begin{equation}\label{apple}
\langle\Psi| T^{k+1}|v_1\rangle\tensor |v_2\rangle\tensor\dots\tensor |v_N\rangle =0.
\end{equation}
We note that ~\eqref{apple} is zero unless
$$\sum_{j=1}^N\sum_{\ell=1}^{l_j} \delta_{j}(\ell)=\sum_{j=1}^N \lambda_j - (k+1)\theta.$$
We will therefore assume that  $\sum \lambda_i- (k+1)\theta$ is a sum of simple positive roots.

By successively taking residues (if possible i.e., if $\sum \lambda_i- (k+1)\theta$ is $0$ or a  sum of positive simple roots) arrive at a correlation function
$$\langle\Psi|\prod_{a\in A} X_a(t_a)|\vec{\lambda}\rangle.$$
\begin{enumerate}
\item $X_1=X_2=\dots=X_{k+1}=f_{\theta}$ where $\theta$ is the highest root in $\frg$, and $X_{j}$ is in the weight space corresponding to negatives of simple roots for $j>k+1$).
\item  $|\vec{\lambda}\rangle=   |\lambda_1\rangle\tensor |\lambda_2\rangle\tensor\dots\tensor |\lambda_N\rangle$ where $|\lambda_j\rangle\in V_j$ is the highest weight vector.
\end{enumerate}

We claim that
$$\langle\Psi|\prod_{a\in A} X_a(t_a)|\vec{v}\rangle$$
has no poles when $t_a=t_b$ for $a,b\in [k+1], a\neq b$ and vanishes when
$t_1=\dots=t_{k+1}=\infty$. Let $\{t_a:a\in B\}$
be the set of variables which residuate to $t_1,\dots,t_{k+1}$.

The first part follows from $[f_{\theta},f_{\theta}]=0$ (so the residue at $t_a=t_b$ is zero). To prove the second part
 assume that $\langle\Psi|\prod_{a\in A} X_a(t_a)|\vec{v}\rangle$ does not vanish on the stratum $t_1=\dots=t_{k+1}=\infty$.
Then by Lemma ~\ref{grandres} (3), the logarithmic degree of
$\Omega$ on the stratum $S:t_b=\infty, \forall b\in B$ is $\leq k+1.$

The logarithmic degree of $\mathcal{R}\Omega$ is, on this stratum, (by Lemma \ref{grandres}, also look at calculations at infinity, $m=k+1$, see formulas from ~\cite{B} on stratum $(S3)$)
$$d^S(\mathcal{R}\Omega)\leq m +\deg_S(\mathcal{R})=m - \frac{m^2}{\kappa} -\frac{2m(g^*-1)}{2\kappa},$$
which is $\frac{m}{\kappa}$ times $\kappa -m -(g^*-1)= 0$, a contradiction to square integrability.
(See ~\cite{B}: equation (6.4), and the proof (there) of Lemma 6.1).

Now take appropriate residues of the  variables $t_j, j>k+1$ in $$\langle\Psi|\prod_{a\in A} X_a(t_a)|\vec{\lambda}\rangle$$
to arrive at a correlation function
$$\Omega'(t_1,\dots,t_{k+1})=\langle\Psi|\prod_{a\in A} X_a(t_a)|\vec{v}\rangle$$
where $A=\{1,\dots,k+1\}$ and $X_a=f_{\theta}$ for all $a\in A$ and
$|\vec{v}\rangle=   |v_1\rangle\tensor |v_2\rangle\tensor\dots\tensor |v_N\rangle$ where $|v_j\rangle\in V_j$ is as in ~\eqref{taste}. It is easy to see this new correlation function vanishes when $t_1=t_2=\dots=t_{k+1}=\infty$ (this requires a small argument in the style of Lemma ~\ref{lemmefondamental}).We will now show that the desired vanishing ~\eqref{apple} holds.

Note that  $\Omega'(t_1,\dots,t_{k+1})$ is a differential form with singularities only at $t_a=z_i$ and vanishes at $t_1=\dots=t_{k+1}=\infty$. The sum of residues in $t_1$ of the meromorphic form $t_1\Omega(t_1,\dots,t_{k+1})$ is zero. Its singularities are in the set $\{z_1,\dots,z_N,\infty\}$. Let $u_i=\frac{1}{t_i}$ to facilitate computations at infinity. Write $ \Omega'(t_1,\dots,t_{k+1})= f(t_1,\dots,t_{k+1})du_1\wedge du_2\wedge\dots\wedge du_{k+1}$.
We obtain
$$f(t_1,t_2,\dots,t_k,\infty)=-\sum_{i=1}^N \Res_{t_{k+1}=z_i}t_{k+1}\Omega'(t_1,\dots,t_{k+1}),$$
and iterating this, we obtain
$$0=f(\infty,\infty,\dots,\infty)=(-1)^{k+1}\prod_{a=1}^{k+1}\big(\sum_{i=1}^N \Res_{t_a=z_i}\big)t_1t_2\cdots {t_{k+1}}\Omega'(t_1,\dots,t_{k+1})$$
which immediately implies the desired equality ~\eqref{apple}.

\section{Lowest degree terms and logarithmic degrees along various strata}
\subsection{}\label{above} Let $Q(t_1,\dots,t_M)$ be a rational function in $t_1,\dots, t_M$ with   poles only along the diagonals of the form $t_a=t_b$ with $1\leq a <b \leq M$ and $t_a=z_j$, with $j=1,\dots,N$ (with $M$ arbitrary in this section) and let $S$ be the stratum $t_1=t_2=\cdots=t_L$.

We multiply $Q$ by an factor $\mathcal{P}=\prod_{1\leq a < b \leq L}(t_a-t_b)^{n_{a,b}}$ where $n_{a,b}\geq 0$, to get a rational function $\widetilde{Q}$, which is holomorphic (generically) on $S$. Let $u_a=t_a-t_1$ for $1< a \leq L$. We expand $\widetilde{Q}$ as power series with coefficients in the function field $K=K(S)$ of $S$.
$$\widetilde{Q}= \sum_{d \geq d_0}g_d(u_2, \cdots, u_L).$$

Note that we made a choice of a variable $t_1$ from the set $\{t_1,\dots,t_L\}$. Here $g_d$ is a homogeneous polynomial in the $u_a$'s with coefficients in $K(S)$ with total degree $d$ and  $d_0$ is the smallest number such that $g_{d_0}\neq 0$. Thus we can rewrite $Q$ as follows
\begin{equation}\label{jetlag}
Q =\frac{1}{\mathcal{P}} \sum_{d \geq d_0}g_d(u_2, \cdots, u_L).
\end{equation}
\begin{defi}
We refer to $\frac{g_{d_0}(u_2,\cdots, u_L)}{\mathcal{P}}$ as the lowest degree term of $Q$ and $d_0-\deg(\mathcal{P})$
as the degree of $Q$ on the stratum $S$. We also refer to $\mathcal{P}$ as a correction factor of $Q$ on the stratum $S$.
\end{defi}

\begin{remark}
Suppose $S$ is the stratum $t_1=\cdots=t_L=z_1$. We can repeat the above definitions of degree, lowest degree term and correction factors: We  multiply $\Omega$ by $\mathcal{P}=\prod_{1\leq a < b \leq L}(t_a-t_b)^{n_{a,b}}\prod_{1\leq a \leq L}(t_a-z_1)^{n_a}$ to get a function $\widetilde{Q}$ holomorphic on the generic point of $S$. We then expand $\widetilde{Q}$
in powers of $t_1-z_1, t_2-z_1,t_3-z_1,\dots, t_L-z_1$.
\end{remark}
\subsection{Some remarks on the lowest degree term}
The results of this subsection are not used elsewhere in this paper. In situation of Section \ref{above}, let
$h_d=g_d(t_2-t_1,\dots, t_L-t_1)$ for $d\geq d_0$. It is easy to see that $h_d$ is a polynomial  in $t_1,\dots, t_L$ with $K=K(S)$ coefficients. Note that $d_0$ and $h_d$ may (a priori) depend upon the choice of the ``initial variable'' $t_1$.

\begin{lemma}\label{polyvanish}The lowest degree and the lowest degree terms have the following properties:
\begin{enumerate}
 \item The lowest degree $d_0$, and the corresponding polynomial $h_{d_0}\in K[t_1,\dots,t_L]$ are independent of the choice of the initial variable $t_1$.

\item If $Q$ is symmetric in $t_1,t_2$, then so is $h_{d_0}$.

\item  Suppose $Q$ has no poles along $t_i=t_j$ for $i,j \in \{ 1,2, \dots, L'\}$ and vanishes on $t_1=\dots=t_{L'}$, then so does $h_{d_0}$ (here $L'\leq L$).
\end{enumerate}
\end{lemma}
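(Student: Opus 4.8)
\textbf{Proof plan for Lemma \ref{polyvanish}.}

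The plan is to work entirely inside the expansion \eqref{jetlag} and track how the lowest degree term transforms, never leaving the setting of power series in the displacement variables. First I would prove (1). Fix two candidate initial variables, say $t_1$ and $t_2$. The point is that $\widetilde{Q}$ is one fixed element of $K(S)[[u_2,\dots,u_L]]$ (after multiplying by the chosen correction factor $\mathcal{P}$, which is itself symmetric in $t_1,\dots,t_L$ and does not depend on the choice of initial variable), and the two expansions differ only by the linear change of variables $u_a \mapsto u_a - u_2$ (for $a>2$) together with $u_2\mapsto -u_2$ that carries ``displacements from $t_1$'' to ``displacements from $t_2$''. Since this change of variables is \emph{linear and homogeneous}, it preserves the grading by total degree; hence it sends the piece $g_{d_0}$ of lowest degree to the piece of lowest degree in the other expansion, and it cannot lower $d_0$ (else, applying the inverse, $d_0$ would not have been minimal in the first place). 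So $d_0$ is intrinsic, and $g_{d_0}$, re-expressed in the translation-invariant coordinates $t_a-t_b$, i.e.\ the polynomial $h_{d_0}\in K'[t_1,\dots,t_L]$, is the same regardless of which variable we singled out. The same argument applies verbatim to the stratum $t_1=\dots=t_L=z_1$, replacing the $u_a$'s by $t_a-z_1$.

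Part (2) is then essentially formal: if $Q$ is symmetric in $t_1,t_2$ and we use $t_1$ as initial variable, the transposition $(1\,2)$ acts on the expansion by the linear homogeneous substitution described above, so it permutes the homogeneous pieces $g_d$ among themselves degree by degree; since $Q=\frac{1}{\mathcal{P}}\sum g_d$ is fixed by $(1\,2)$ and $\mathcal{P}$ is symmetric in $t_1,t_2$, each $g_d$ is fixed, in particular $g_{d_0}$, and hence $h_{d_0}$ is symmetric in $t_1,t_2$. (One uses here that the span of each graded piece is preserved, and that $d_0$ is intrinsic by (1), so the lowest piece maps to the lowest piece.)

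Part (3) is the one requiring genuine care, and I expect it to be the main obstacle. We are told $Q$ has no poles along $t_i=t_j$ for $i,j\le L'$ and vanishes on $t_1=\dots=t_{L'}$, and we must transfer both properties to $h_{d_0}$. By (1) we may choose the initial variable to be $t_1$. The ``no poles along $t_i=t_j$ for $i,j\le L'$'' property says that in forming the correction factor $\mathcal{P}=\prod_{a<b\le L}(t_a-t_b)^{n_{a,b}}$ we may take $n_{a,b}=0$ for all pairs inside $[L']$; so $\widetilde Q$ already contains no factors $(t_a-t_b)^{-1}$ with $a,b\le L'$, and neither does any $g_d$, hence neither does $h_{d_0}$. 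For the vanishing: restrict $\widetilde Q$ to the subvariety $u_2=\dots=u_{L'}=0$ (i.e.\ $t_1=\dots=t_{L'}$); this is a clean restriction because $\widetilde Q$ is holomorphic there, and the hypothesis says $Q$, hence $\widetilde Q/\mathcal{P}$, vanishes on it, and $\mathcal{P}$ is (generically) nonzero there by the previous sentence, so $\widetilde Q$ itself vanishes on $u_2=\dots=u_{L'}=0$. Now setting $u_2=\dots=u_{L'}=0$ in the expansion $\widetilde Q=\sum_{d\ge d_0} g_d(u_2,\dots,u_L)$ commutes with extracting the homogeneous piece of each degree in the remaining variables $u_{L'+1},\dots,u_L$: the lowest-degree part of $\widetilde Q|_{u_2=\dots=u_{L'}=0}$ is $g_{d_0}|_{u_2=\dots=u_{L'}=0}$ provided this is nonzero, and if it vanishes we are still done, since then $g_{d_0}$, hence $h_{d_0}$, vanishes on $t_1=\dots=t_{L'}$, which is exactly what we want. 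The one subtlety to check is that restricting to $u_2=\dots=u_{L'}=0$ does not secretly lower the $d_0$ seen among the surviving variables in a way that confuses the bookkeeping — but this cannot happen, because $d_0$ was defined using all of $u_2,\dots,u_L$ and specialization can only kill terms, never create lower-degree ones. Assembling these observations gives the claim.
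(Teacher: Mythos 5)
The paper states Lemma~\ref{polyvanish} without proof, so there is nothing to compare against; your write-up supplies the routine verification that the authors omitted, and it is essentially correct. Two small points of imprecision are worth tightening. First, in part (1) the passage from the expansion based at $t_1$ to the one based at $t_2$ is not \emph{purely} the linear substitution $u_2\mapsto -u_2$, $u_a\mapsto u_a-u_2$: the coefficients, which live in $K(S)$ via the identification of $t_1$ (resp.\ $t_2$) with the coordinate $t$ on $S$, must also be re-expanded, since $t_2=t_1+u_2$; this contributes terms of strictly higher degree, so the substitution is only \emph{filtered} rather than graded. Your ``apply the inverse'' argument still shows $d_0$ is intrinsic, and the associated graded of the substitution is the linear map you wrote down, so $h_{d_0}$ is unchanged --- but a sentence acknowledging the coefficient re-expansion is needed, and the same caveat applies to the claim in part (2) that ``each $g_d$ is fixed'' (only $g_{d_0}$ need be, which is all you use). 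Second, $\mathcal{P}=\prod(t_a-t_b)^{n_{a,b}}$ is not symmetric in $t_1,\dots,t_L$ in general; what you actually use is that it does not depend on the choice of initial variable (true), and, for part (2), that it can be chosen symmetric in $t_1,t_2$ when $Q$ is --- which holds because a function symmetric in $t_1,t_2$ has even pole order along $t_1=t_2$. Part (3) is correct as written: since the graded pieces $g_d|_{u_2=\cdots=u_{L'}=0}$ have distinct degrees and sum to the restriction of $\widetilde{Q}$, which vanishes, each of them vanishes, in particular $g_{d_0}$; and omitting the factors $(t_a-t_b)$, $a,b\le L'$, from $\mathcal{P}$ shows the lowest degree term $g_{d_0}/\mathcal{P}$ acquires no poles along those diagonals.
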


\subsection{Logarithmic degree of meromorphic forms}
Let $\Omega$ be a top-degree meromorphic form on $\mathbb{A}^M$ such that $\Omega$ has poles only along the diagonals of the form $t_a=t_b$.
Write $\Omega=Q(t_1,\dots,t_M)\vec{dt}$, where $\vec{dt}=dt_1\dots dt_M$. Let $m$ be the degree of $Q$ on the stratum $S: t_1=\dots=t_L$. Then by an easy calculation,
\begin{lemma}
The logarithmic degree
$d^S(\Omega)$ of $\Omega$ along $S$ equals $m+L-1$.
\end{lemma}
We will call the lowest degree term of $Q$ on $S$ also as the lowest degree term of $\Omega$ on $S$.

\begin{lemma}\label{grandres}
 Suppose $\Omega$ has a simple pole along $t_1=t_2$. Let  $S$ and $S^*$ be the strata $t_1=t_2=\cdots=t_L$ and $t_1=t_3=t_4=\cdots=t_L$ respectively (the stratum $S^*$ is in variables $t_1,t_3,\dots,t_L$).
 Then,
 $$d^{S}(\Omega)\leq d^{S^*}(\Res_{t_2=t_1}\Omega).$$
In fact,
\begin{enumerate}
\item If the lowest degree term of $\Omega$ is holomorphic along $t_1=t_2$,
$$d^{S}(\Omega)<d^{S^*}(\Res_{t_2=t_1}\Omega).$$
\item If the lowest degree  term of $\Omega$  has a pole at $t_1=t_2$,
$$d^{S}(\Omega)=d^{S^*}(\Res_{t_2=t_1}\Omega).$$

\end{enumerate}
\end{lemma}

The following lemma shows that no new poles are created in the lowest degree term along any diagonal if we take residues along poles of the lowest degree term.
\begin{lemma}\label{division}
Suppose $\Omega$ has a simple pole along $t_2=t_3$. Further assume that the lowest degree term of $\Omega$ is holomorphic (generically) along $t_1=t_2$, $t_1=t_3$ and has a pole along $t_2=t_3$. Then the lowest degree term of $\Res_{t_3=t_2}\Omega$ is also holomorphic (generically) along $t_1=t_2$.
\end{lemma}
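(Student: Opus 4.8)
The plan is a direct computation with correction factors, carried out much as in the proof of Lemma~\ref{logeq}. Let $S$ be the stratum $t_1=t_2=\cdots=t_L$ and fix a correction factor $\mathcal P=\prod_{1\le a<b\le L}(t_a-t_b)^{n_{a,b}}$ of $\Omega=Q\,\vec{dt}$ on $S$, so that $\widetilde Q=\mathcal P Q$ is holomorphic at the generic point of $S$; in the notation of \eqref{jetlag} (with $u_a=t_a-t_1$) the lowest degree term of $\Omega$ on $S$ is $g_{d_0}(u_2,\dots,u_L)/\mathcal P$. Since the local ring of $\mathbb A^M$ at the generic point of $S$ is a localization of a polynomial ring, hence factorial, and $\Omega$ has a simple pole along $t_2=t_3$, we may take $\mathcal P$ with $n_{2,3}=1$; as $g_{d_0}/\mathcal P$ is independent of the choice of valid $\mathcal P$, the hypotheses translate unambiguously as follows. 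Write $\mathcal P=(t_2-t_3)\mathcal P_0$ with $\mathcal P_0=\prod_{(a,b)\ne(2,3)}(t_a-t_b)^{n_{a,b}}$. Holomorphicity of the lowest degree term along $t_1=t_2$ and along $t_1=t_3$ means $u_2^{\,n_{1,2}}\mid g_{d_0}$ and $u_3^{\,n_{1,3}}\mid g_{d_0}$, while the presence of a pole along $t_2=t_3$ means $(u_2-u_3)\nmid g_{d_0}$, i.e. $g_{d_0}|_{u_3=u_2}\ne 0$.

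Next I would compute $\Res_{t_3=t_2}\Omega$ and its lowest degree term along $S^*:=\{t_1=t_2=t_4=\cdots=t_L\}$. Up to sign, $\Res_{t_3=t_2}\Omega=\bar Q\,dt_1\wedge dt_2\wedge dt_4\wedge\cdots\wedge dt_M$ with $\bar Q=[(t_3-t_2)Q]|_{t_3=t_2}$. As $(t_3-t_2)Q=-\widetilde Q/\mathcal P_0$ and $\mathcal P_0$ is not identically zero on $\{t_2=t_3\}$, restriction to $t_3=t_2$ gives
$$\mathcal P^*\,\bar Q=-\,\widetilde Q|_{t_3=t_2},\qquad \mathcal P^*:=\mathcal P_0|_{t_3=t_2},$$
whose right side is a polynomial; moreover $\mathcal P^*$ is again a product of powers of the diagonals $t_a-t_b$ with $a<b$ in $\{1,2,4,\dots,L\}$ (the factor $(t_1-t_3)^{n_{1,3}}$ merging with $(t_1-t_2)^{n_{1,2}}$ into $(t_1-t_2)^{n_{1,2}+n_{1,3}}$, and $(t_3-t_b)^{n_{3,b}}$ merging with $(t_2-t_b)^{n_{2,b}}$), so it is a correction factor for $\bar Q$ on $S^*$. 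Expanding $-\widetilde Q|_{t_3=t_2}=-\sum_{d\ge d_0}g_d(u_2,u_2,u_4,\dots,u_L)$ in powers of the $t_a-t_1$, the lowest term $-g_{d_0}(u_2,u_2,u_4,\dots,u_L)$ is nonzero by the last hypothesis, so the lowest degree term of $\Res_{t_3=t_2}\Omega$ on $S^*$ equals $-\,g_{d_0}|_{u_3=u_2}/\mathcal P^*$.

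It then remains to check that $-g_{d_0}|_{u_3=u_2}/\mathcal P^*$ is regular generically along $t_1=t_2$, i.e. along $u_2=0$. The only factors of $\mathcal P^*=\mathcal P_0|_{u_3=u_2}$ vanishing on $u_2=0$ are $(u_1-u_2)^{n_{1,2}}=(-u_2)^{n_{1,2}}$ and $(u_1-u_3)^{n_{1,3}}|_{u_3=u_2}=(-u_2)^{n_{1,3}}$, so $\operatorname{ord}_{u_2=0}\mathcal P^*=n_{1,2}+n_{1,3}$; on the other hand $u_2^{\,n_{1,2}}u_3^{\,n_{1,3}}\mid g_{d_0}$ forces $u_2^{\,n_{1,2}+n_{1,3}}\mid g_{d_0}|_{u_3=u_2}$, hence $\operatorname{ord}_{u_2=0}\big(g_{d_0}|_{u_3=u_2}\big)\ge n_{1,2}+n_{1,3}$. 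Therefore the quotient has non-negative order along $u_2=0$, which is the claimed holomorphicity.

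The step I expect to be the main obstacle is the bookkeeping of the first paragraph: verifying that the lowest degree term is genuinely independent of the chosen correction factor, that the exponent of $(t_2-t_3)$ may be normalized to $1$ starting only from the divisor-level simple-pole hypothesis, and that $\mathcal P^*$ really is a bona fide correction factor for $\bar Q$ on $S^*$. Once these points are settled, the remaining content is the short order count above. Fixing the sign and orientation in the residue is routine and handled exactly as in the proof of Lemma~\ref{logeq}.
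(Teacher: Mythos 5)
Your proof is correct; the paper states Lemma~\ref{division} without an explicit proof, and your argument — normalizing the correction factor so that $(t_2-t_3)$ occurs to the first power, identifying the lowest degree term of $\Res_{t_3=t_2}\Omega$ on the stratum $S^*$ as $-g_{d_0}|_{u_3=u_2}/\mathcal{P}^*$ (nonzero precisely because the lowest degree term of $\Omega$ has a pole along $t_2=t_3$), and then comparing $\operatorname{ord}_{u_2=0}\mathcal{P}^*=n_{1,2}+n_{1,3}$ with the divisibility $u_2^{n_{1,2}}u_3^{n_{1,3}}\mid g_{d_0}$ — is exactly the computation the paper's framework of correction factors and lowest degree terms is designed to carry out. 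The bookkeeping points you flag (well-definedness of the lowest degree term and the fact that $\mathcal{P}^*$ is a bona fide correction factor for the residue) are handled correctly, so there is no gap.
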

We end this section with a definition.
\begin{defi}Let $J=(j_1, j_2, \cdots, j_k)$ be an ordered subset of $[M]$ and $m$ be the minimum element in $J$. Let $K=([M]\backslash{J})\cup{\{m\}}$. We define $\Res_{\vec{J}}\Omega$ to be a form on $\mathbb{A}^{K}$ obtained from taking iterated residues of $\Omega$ along $t_m=t_a$  where $a \in J\backslash {\{m\}}$ following the order of the set $J \backslash\{m\}$ starting from the lowest.

\end{defi}
\section{The first step}\label{exam534}

Let $\Omega$ be any $M$-form on $X_{\vec{z}}$. As in Section \ref{fatboy}, we consider the following types of strata $S\subseteq (\pone)^M$:
\begin{enumerate}
\item[(S1)] A certain subset of the $t$'s come together (to an arbitrary moving point). That is
$t_1=t_2=\dots= t_L$ after renumbering (possibly changing $\beta$).
\item[(S2)] A certain subset of the $t$'s come together to one of the $z$'s. That is
$t_1=t_2=\dots= t_L=z_1$ after renumbering (possibly changing $\beta$).
\end{enumerate}
We note the following consequence of the square integrability assumption (cf.  ~\cite{loo}):
\begin{proposition}
Let $\mathcal{R}\Omega$ be a square integrable form on $X_{\vec{z}}$, then,
\begin{enumerate}
\item The logarithmic degree of $\mathcal{R}\Omega$ along a stratum $S:t_1=t_2=\dots=t_L$  $$d^{S}(\mathcal{R}\Omega):=d^{S}(\Omega)-\sum_{1\leq a <b \leq L}\frac{(\beta(a),\beta(b))}{\kappa}>0,$$ where $d^S(\Omega)$ is the logarithmic degree of $\Omega$ along $S$.
\item The logarithmic degree of $\mathcal{R}\Omega$ along a stratum $S:t_1=t_2=\dots=t_L=z_1$  $$d^{S}(\mathcal{R}\Omega):=d^{S}(\Omega)-\sum_{1\leq a <b \leq L}\frac{(\beta(a),\beta(b))}{\kappa} +\sum_{1\leq a \leq L}\frac{(\beta(a),\lambda_1)}{\kappa}>0,$$ where $d^S(\Omega)$ is the logarithmic degree of $\Omega$ along $S$.
\end{enumerate}

\end{proposition}
\begin{lemma}\label{observation}
Suppose $\mathcal{R}\Omega$ is square integrable.
\begin{enumerate}
\item If $a\neq b\in [M]$, then $\Omega$ has at most a simple pole along $t_a=t_b$.
\item If $(\beta(a),\beta(b))\geq 0$ for $a\neq b\in [M]$, then $\Omega$ does not have a pole along $t_a=t_b$.
\item $\Omega$ does not have a pole along $t_a=\infty$ for any $a\in [M]$.
\item $\Omega$ has at most a simple pole at $t_a=z_i$ for any $i$.
\item Suppose (after possibly changing $\beta$) that $\beta(1)=\dots=\beta(m)=\alpha$ and \newline $\widetilde{\Omega}=\prod_{a=1}^m (t_a-z_j)\Omega$. Then
$\widetilde{\Omega}$ vanishes at the  generic point of $t_1=\dots=t_m=z_j$  if $m\geq 1+ \frac{2(\lambda_j,\alpha)}{(\alpha,\alpha)}$.
\item Suppose (after possibly changing $\beta$) that  $\beta(2)=\beta(3)=\dots =\beta(m+1)=\alpha_i$
and $\beta(1)=\alpha_j$, and $\widetilde{\Omega}=\prod_{a=2}^{m+1} (t_{a}-t_{1})\Omega$. Then
$\widetilde{\Omega}$ vanishes at the generic point of $t_1=\dots=t_m=t_{m+1}$  if $m\geq 1- \frac{2(\alpha_j,\alpha_i)}{(\alpha_i,\alpha_i)} = 1 -n_{ij}$ (see \eqref{formnij}).
\end{enumerate}
\end{lemma}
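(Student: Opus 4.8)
The plan is to derive all six items from a single mechanism: square integrability of $\mathcal{R}\Omega$ near a general point of a stratum becomes, after one blow-up of that stratum, an order inequality on the resulting exceptional divisor, which is then combined with the known orders of $\mathcal{R}$ (the (S1)/(S2)/(S3) computations of \cite{B}) and elementary root facts. Concretely, $\mathcal{R}$ has order $-(\beta(a),\beta(b))/\kappa$ along $t_a=t_b$, order $(\lambda_i,\beta(a))/\kappa$ along $t_a=z_i$, and order $-(\beta(a),\beta(a))/\kappa$ along $t_a=\infty$ in the coordinate $u_a=1/t_a$ (this last because $\mu=\sum_b\beta(b)$ gives $\mathcal{R}\sim t_a^{(\beta(a),\beta(a))/\kappa}$ as $t_a\to\infty$); along the exceptional divisor $E$ of the blow-up of a stratum where a set $B$ of the $t$'s come together, possibly with some $z_j$, the order of $\mathcal{R}$ is the sum of the pairwise orders along the colliding divisors. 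Writing $\Omega=Q\,\vec{dt}$ and $\widetilde{Q}=\mathcal{P}Q$ for a correction factor $\mathcal{P}$ making $\widetilde{Q}$ holomorphic at the generic point of the stratum, with lowest degree $d_0$ (as in the preceding section), the pulled-back form $\mathcal{R}\Omega$ has order along $E$ equal to $\operatorname{ord}_E(\mathcal{R})+d_0-\deg\mathcal{P}+(|B|-2)$ for an (S1)-type collision and $\operatorname{ord}_E(\mathcal{R})+d_0-\deg\mathcal{P}+(|B|-1)$ for an (S2)-type one, the last summand being the blow-up Jacobian; square integrability forces this quantity to exceed $-1$, and this is the master inequality.

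For the divisorial assertions no blow-up is needed ($|B|=1$ or $2$); take $d_0=0$ and let $\deg\mathcal{P}=N$ be the pole order. For (3), rewriting $\Omega$ near $t_a=\infty$ with $u_a=1/t_a$ produces a coefficient of $u_a$-order $\operatorname{ord}_{u_a}(Q)-2$, and the master inequality gives $\operatorname{ord}_{u_a}(Q)-2-(\beta(a),\beta(a))/\kappa>-1$, i.e. $\operatorname{ord}_{u_a}(Q)\ge 2$, which is regularity of $\Omega$ along $t_a=\infty$. For (4), $N<1+(\lambda_i,\beta(a))/\kappa$, and since $\beta(a)$ is a simple root, $(\lambda_i,\beta(a))\le(\lambda_i,\theta)\le k<\kappa$, whence $N\le 1$. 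For (1)--(2), $N<1-(\beta(a),\beta(b))/\kappa$: if $(\beta(a),\beta(b))\ge 0$ this forces $N=0$, which is (2); if $(\beta(a),\beta(b))<0$ then $\beta(a)\ne\beta(b)$ are adjacent simple roots, and with $(\theta,\theta)=2$ one checks $(\beta(a),\beta(b))=-1$ in every simple type, so $N<1+1/\kappa<2$ and $N\le 1$, which is (1).

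For (5) and (6) one collides $m$, resp.\ $m+1$, variables. By (1)/(2)/(4) the form $\widetilde{\Omega}=\prod_{a=1}^m(t_a-z_j)\Omega$ (resp.\ $\prod_{a=1}^m(t_a-t_{m+1})\Omega$) is holomorphic at the generic point of the relevant stratum $S$: there are no poles among the colliding $t$'s (all carry the same simple root $\alpha$, so $(\alpha,\alpha)>0$), and the residual simple poles at $t_a=z_j$ (resp.\ $t_a=t_{m+1}$) are cleared. Hence $\widetilde{\Omega}=\widetilde{Q}\,\vec{dt}$ with $d_0\ge 0$, and $\widetilde{\Omega}$ vanishes generically on $S$ exactly when $d_0\ge 1$. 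Blowing up $S$ and applying the master inequality with $\deg\mathcal{P}=m$ and $\operatorname{ord}_E(\mathcal{R})$ the sum of $\binom{m}{2}$ copies of $-(\alpha,\alpha)/\kappa$ together with $m$ copies of $(\lambda_j,\alpha)/\kappa$ (resp.\ $-(\alpha,\alpha')/\kappa$), it collapses to
$$d_0>\frac{m(\alpha,\alpha)}{2\kappa}\Bigl(m-1-\tfrac{2(\lambda_j,\alpha)}{(\alpha,\alpha)}\Bigr),\qquad\text{resp.}\qquad d_0>\frac{m(\alpha,\alpha)}{2\kappa}\Bigl(m-1+\tfrac{2(\alpha,\alpha')}{(\alpha,\alpha)}\Bigr).$$
Under the hypothesis $m\ge 1+\tfrac{2(\lambda_j,\alpha)}{(\alpha,\alpha)}$ (resp.\ $m\ge 1-\tfrac{2(\alpha,\alpha')}{(\alpha,\alpha)}$) the right-hand side is $\ge 0$, so the integer $d_0$ is $\ge 1$, which is the claimed vanishing.

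The main obstacle is the first step, namely justifying that square integrability of the multivalued form $\mathcal{R}\Omega$ on $X_{\vec z}$ is detected divisor by divisor on an (equivariant) embedded resolution, so that for every exceptional divisor $E$ over every stratum one has $\operatorname{ord}_E(p^*\mathcal{R})+\operatorname{ord}_E(p^*\Omega)>-1$ on the cover $Y_{\vec z}$; for the strata of types (S1) and (S2) this localizes to a single blow-up near a general point, and the orders of $\mathcal{R}$ there are precisely the computations of \cite{B}. Identifying the order of $\Omega$'s coefficient along $E$ with $d_0-\deg\mathcal{P}$ plus the blow-up Jacobian — equivalently, that the lowest-degree term of $\widetilde{Q}$ governs the relevant asymptotics — is exactly what the preceding section (see Lemma \ref{polyvanish}) was set up to provide, and the remainder is bookkeeping with root data, uniform in $\frg$.
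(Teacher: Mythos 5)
Your proof is correct and follows essentially the same route as the paper: square integrability is converted into positivity of the logarithmic degree of $\mathcal{R}\Omega$ along each stratum (your ``master inequality'' on the exceptional divisor is exactly the paper's $d^S(\mathcal{R}\Omega)>0$), and the explicit orders of $\mathcal{R}$ then yield the stated pole bounds and vanishing, just as in the paper's argument. The only slip is the parenthetical claim in part (1) that $(\beta(a),\beta(b))=-1$ for adjacent simple roots in every type (for $C_n$ two adjacent short roots give $-1/2$), but all your argument actually needs is $-(\beta(a),\beta(b))<\kappa$, which holds regardless.
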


\begin{proof}
Consider the stratum $t_a=t_b$. The logarithmic degree of $\Omega$  plus the quantity $\frac{-(\beta(a), \beta(b))}{\kappa}$ is positive. Therefore, $\Omega$ has a pole of order at most one along $t_a=t_b$, and if the poles are of order one then $(\beta(a), \beta(b))<0$. This gives us parts (1) and (2) of the lemma. The proof of (4) follows in the same way by considering the stratum $t_a=z_i$.

Since the degree of the function $\mathcal{R}$ along the divisor $t_a=\infty$ is negative ($=-(\beta(a),\beta(a))/\kappa$),  the logarithmic degree of $\Omega$ along this stratum is $\geq 1$ which implies that $\Omega$ is regular along the divisor $t_a=\infty$. This proves (3).

For (5) we consider the stratum $S$ defined by $t_1=\cdots =t_m=z_j$. The logarithmic degree $d^S(\mathcal{R}\Omega)$ along $S$ is positive. Thus we get the following:
\begin{eqnarray*}
d^S(\Omega) -\sum_{1\leq a < b \leq m}\frac{(\beta(a), \beta(b))}{\kappa} + \sum_{a=1}^m\frac{(\lambda_j, \beta(a))}{\kappa} & > & 0,\\
d^S(\Omega) - \frac{m(m-1)}{2}.\frac{(\alpha, \alpha)}{\kappa} + m\frac{(\lambda_j, \alpha)}{\kappa} &>& 0. \\
\end{eqnarray*}
If $m \geq 1 + \frac{2(\lambda_j, \alpha)}{(\alpha, \alpha)}$, then $d^S(\Omega) >0$ which implies that $\widetilde{\Omega}$ vanishes on the stratum $S$. The proof of (6) is similar to (5).
\end{proof}

\begin{remark}
To prove Theorem ~\ref{Main1}, we use Proposition ~\ref{positive}. For $\Omega$ as in the statement of Theorem ~\ref{Main1}, and each stratum $S$ of the form (S1) and (S2) we need to show that the logarithmic degree $d^S(\Omega)\geq 0$. The square integrability assumption gives $$d^S(\mathcal{R}\Omega)=d^S(\Omega)+\operatorname{deg_S}{\mathcal{R}}>0,$$ where $\operatorname{deg}_S(\mathcal{R})$ is the degree of $\mathcal{R}$ on a stratum $S$. So one may hope that $\operatorname{deg}_S\mathcal{R}$ on each stratum is $<1$. This may not be the case. For an example, let $\frg=\mathfrak{sl}_2$, let $\kappa$ be a large multiple of $4$,
let $m=\frac{\kappa}{4} +1$ and $\lambda=\frac{\kappa}{2}$. Then the $\operatorname{deg}_S\mathcal{R}$ on $S:t_1=\dots=t_M=z_1$ with
$\lambda_1=\lambda$ equals
$$\frac{1}{\kappa}(-m(m-1) + \lambda m)=m(\lambda-(m-1))=\frac{1}{4}(\frac{\kappa}{4}+1),$$
which is $>1$ (for large $\kappa$).
\end{remark}
\begin{remark}
Our argument uses the square-integrability information from a select set of strata to build a ``profile'' of $\Omega$ (Theorem ~\ref{controlpoles}), and then use this to prove that the logarithmic degree of $\Omega$ is non-negative on every stratum $S$.

\end{remark}

Assume now that $\frg$ is classical or $G_2$. We will prove the following property of the pole structure of $\Omega$. Let $T=\{1,2,\cdots, \ell\}$. Consider an iterated residue
$$\Omega'=\Res_{\vec{T}}\Omega=\Res_{t_{\ell}=t_1}\Res_{t_{\ell-1}=t_1}\dots\Res_{t_3=t_1}\Res_{t_2=t_1}\Omega.$$
Note that $\Omega'$ is a top degree form in $(t_1,t_{\ell+1},t_{\ell+2},\dots, t_M)$.
\begin{theorem} \label{controlpoles} Suppose $\mathcal{R}\Omega$ is square-integrable.
Assume that $\Omega'=\Res_{\vec{T}}\Omega\neq 0$. Then,
\begin{enumerate}
\item $\beta(1)+\dots+\beta(\ell)$ is a positive root.
\item The form $\Omega'$ has at most a simple pole along any of the divisors
$t_1=t_p, p>\ell$.
\item The form $\Omega'$  has at most simple pole along any of the divisors
$t_1=z_j$, for $j=1,\dots,N$.
\end{enumerate}
\end{theorem}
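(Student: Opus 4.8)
The plan is to prove the three assertions simultaneously by induction on $L$, peeling the iterated residue off one factor at a time. For $1\le j\le L$ put $\Omega_j=\Res_{t_j=t_1}\cdots\Res_{t_3=t_1}\Res_{t_2=t_1}\Omega$, so that $\Omega_1=\Omega$ and $\Omega_L=\Res_{\vec T}\Omega$; this is a top-degree form in $t_1,t_{j+1},\dots,t_M$ with the $z_i$ as parameters, and we set $\gamma_j:=\beta(1)+\dots+\beta(j)$. The strengthened inductive statement is: if $\Omega_j\neq 0$ then (a) $\gamma_j$ is a positive root, (b) $\Omega_j$ has at most a simple pole along each divisor $t_1=t_p$ ($p>j$) and $t_1=z_i$, and (c) $\Omega_j$ obeys the logarithmic-degree bounds along the nested strata $t_1=t_{p_1}=\cdots$ (and those ending at a $z_i$) needed to run the passage $j\mapsto j+1$. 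The base case $j=1$ is Lemma~\ref{observation}. One should keep in mind that at this point in the paper $\Omega$ is \emph{not} yet known to be a Schechtman--Varchenko correlation function --- that is exactly the content of Theorem~\ref{Main1} --- so the convenient rule ``residue along $t_a=t_b$ $\leftrightarrow$ Lie bracket $[X_a,X_b]$'' is unavailable and must be replaced by the square-integrability estimates of Lemma~\ref{observation}, the comparison Lemmas~\ref{polyvanish}--\ref{division}, and the $\Sigma$-symmetry of $Q$ recorded after Theorem~\ref{main}.

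For the inductive step of (a): assume $\Omega_j=\Res_{t_j=t_1}\Omega_{j-1}\neq 0$, so $\Omega_{j-1}\neq 0$ and $\gamma_{j-1}\in\Delta_+$; we must show $\gamma_j=\gamma_{j-1}+\beta(j)\in\Delta_+$. First, $\Omega_j\neq 0$ already forces $(\beta(1),\beta(j))<0$: otherwise Lemma~\ref{observation}(2) gives that $\Omega$ has no pole along $t_1=t_j$, and residuation along the other diagonals of the path introduces none there (Lemma~\ref{division}), so $\Res_{t_j=t_1}\Omega_{j-1}=0$. Next, if $\gamma_j\notin\Delta_+$ then the $\beta(j)$-string through $\gamma_{j-1}$ does not extend upward, so $(\gamma_{j-1},\beta(j))\ge 0$, and hence some $\beta(a_0)=\beta(j)$ with $1<a_0<j$ and $\gamma_j\notin\Delta_+$ arises from a Serre relation among the simple roots appearing near $\beta(j)$ in the chain. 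Using the $\Sigma$-symmetry of $Q$ to permute the variables carrying equal simple roots, one reorganises the iterated residue so that this Serre sub-configuration is residued first and applies Lemma~\ref{observation}(6) (the form-level Serre vanishing, with exponent $m\ge 1-n_{ij}$) on the corresponding sub-stratum; the relevant lowest-degree term of $\Omega_{j-1}$ along $t_1=\cdots=t_j$ then vanishes, so $\Res_{t_j=t_1}\Omega_{j-1}=0$, a contradiction. Determining which Serre relation produces a given ``bad'' extension, and checking that the reorganisation really lands on a stratum where Lemma~\ref{observation}(6) is applicable, is a case-by-case verification over the types $A_n,B_n,C_n,D_n$ and $G_2$; this is where the argument ceases to be uniform, and why it is harder for the remaining $\frg$ (cf. Remark~\ref{f4remark}).

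For (b) and (c): with $\gamma:=\gamma_L\in\Delta_+$, fix $p>L$ (resp. $i\in[N]$) and consider the stratum $S:\ t_1=\cdots=t_L=t_p$ (resp. $t_1=\cdots=t_L=z_i$). The degree of $\mathcal{R}$ on $S$ equals $-\frac1\kappa\sum(\beta(a),\beta(b))$, the sum over pairs $a<b$ in $\{1,\dots,L,p\}$, which one rewrites as $-\frac1{2\kappa}\bigl((\gamma+\beta(p),\gamma+\beta(p))-\sum_a(\beta(a),\beta(a))\bigr)$ (and the analogous expression with the $\lambda_i$-terms in the second case, exactly as in the proof of Lemma~\ref{observation}(5),(6)). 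Square integrability $d^S(\mathcal{R}\Omega)>0$ then bounds $d^S(\Omega)$ below; transporting this bound down the residue path $t_2=t_1,\dots,t_L=t_1$ by Lemma~\ref{grandres2} bounds the logarithmic degree of $\Omega_L$ along $t_1=t_p$ (resp. $t_1=z_i$), and since $\Omega$ --- hence, by Lemma~\ref{division}, each $\Omega_j$ --- has at most a simple pole along those diagonals, this forces that $\Omega_L$ cannot acquire a pole of order $\ge 2$ there, which is (b); the quantitative form of the same computation gives (c). These numerical inequalities are again checked type by type, using that $\gamma+\beta(p)$ (and, in the $z$-case, $\gamma$ together with $\lambda_i$) stays within the range permitted by the root system.

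The principal obstacle is the root-combinatorial analysis inside the inductive step of (a): for each classical type and for $G_2$ one must classify the admissible chains of simple roots --- those whose partial sums all lie in $\Delta_+$ --- identify, for each chain, the Serre relation responsible for the first ``bad'' extension, and verify that after the $\Sigma$-symmetry reorganisation this relation lands on a stratum where the square-integrability vanishing of Lemma~\ref{observation}(6) can be invoked. A secondary, purely technical difficulty is the bookkeeping that propagates the ``at most simple pole'' and logarithmic-degree statements through the iterated residue: Lemmas~\ref{polyvanish}--\ref{division} are tailored for exactly this, but they must be chained along the path $t_2=t_1,\dots,t_L=t_1$ while working on the level of lowest-degree terms, since a residue can a priori produce a new pole that becomes invisible only after passing to the lowest-degree term.
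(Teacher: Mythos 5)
Your overall architecture coincides with the paper's: induct along the residue chain, derive positivity of the partial sums from square-integrability (``Serre''-type) vanishing, get the simple-pole bounds from logarithmic-degree estimates on the strata $t_1=\cdots=t_\ell=t_p$ (resp.\ $t_1=\cdots=t_\ell=z_j$) transported through the residues by Lemmas~\ref{grandres1}--\ref{grandres2}, and reduce part (3) to (1)--(2) via the level bound $(\lambda_j,\gamma)\le k$ together with the inequality $\sum_{i<j}(\delta_i,\delta_j)>-g^*$ for a positive root $\gamma=\sum_i\delta_i$ (the paper's Lemma~\ref{dualb}), which gives $d^{S'}(\Res_{\vec{T}}\Omega)>-1$. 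There are, however, two genuine gaps. The smaller one: your claim that $\Omega_j\neq 0$ forces $(\beta(1),\beta(j))<0$ is false. In type $A$ with the chain $\alpha_1,\alpha_2,\alpha_3,\dots$ one has $(\beta(1),\beta(3))=(\alpha_1,\alpha_3)=0$, yet the residue survives because the pole of $\Res_{t_2=t_1}\Omega$ along $t_1=t_3$ is created by the pole of $\Omega$ along $t_2=t_3$. The correct statement is that $(\beta(a),\beta(j))<0$ for \emph{some} $a<j$, which is what Lemma~\ref{lemmefondamental} actually controls.

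The more serious gap is in the heart of the theorem: the assertion that the residue is \emph{regular} along $t_1=t_p$ whenever $\gamma_{j-1}+\beta(p)$ is not a positive root (the paper's Proposition~\ref{redux}). This is what drives part (1), and it is also needed for part (2) in exactly the case where $\gamma+\beta(p)\notin\Delta_+$, since there Lemma~\ref{dualb} is unavailable and the stratum estimate $\sum_{a<b}(\beta(a),\beta(b))>-g^*$ can fail; your phrase ``stays within the range permitted by the root system'' passes over this case. Your proposed mechanism --- use $\Sigma$-symmetry to reorganise the residue so that a single Serre configuration is residuated first, then apply Lemma~\ref{observation}(5)--(6) once --- yields only a degree gain of one on the relevant stratum. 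For the chains that actually occur in $B_n$, $C_n$, $D_n$ and $G_2$ once colors repeat (e.g.\ $\ell=n+m$ in $B_n$, or $\ell=5$ in $G_2$), the correction factor has degree of order $n+3m$ and one must show the corrected form vanishes to high order on the big stratum; the paper obtains this from the compound combinatorial lemmas (Lemmas~\ref{mindeg1} through~\ref{mig4}), which exploit \emph{simultaneously} several distinct partial-diagonal vanishings together with symmetry in several groups of variables, fed through Lemma~\ref{polyvanish}. A single application of Lemma~\ref{observation}(6) does not suffice, so the case-by-case analysis you defer is not a routine verification but the main content of the proof.
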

The following can be proved using Theorem ~\ref{controlpoles} and Lemma ~\ref{iter4}.
\begin{proposition}\label{controliter}
Let $I_1, \dots, I_n$ be pairwise disjoint subsets of $[M]$, with $I_j=\{a(j,1) <\dots < a(j,m_j)\}$, $|I_j|=m_j, \ j=1,\dots,n$. Then, the form $\Res_{\vec{I}_{n}}\cdots \Res_{\vec{I}_{1}}\Omega$ has at most simple poles along the sets $t_{a(j,1)}=z_i$ for $j=1,\dots,n$ and $i=1, \dots, N$.
\end{proposition}
\subsection{Proof of Theorem ~\ref{Main1}}
Given Theorem ~\ref{controlpoles} we will now prove Theorem ~\ref{Main1}. Let $\omega=\mathcal{R}\Omega$ be as in the statement of Theorem \ref{Main1}.  We need to show that the logarithmic degree of $\Omega$ along any stratum of the form $(S1)$ or $(S2)$ is non-negative. Our proof will follow a sequence of residues.
\begin{enumerate}
\item We always take residues along poles of a suitable lowest degree term of a form for a given stratum: in this case, by Lemma ~\ref{grandres}, the logarithmic degree does not change after taking residues.
\item The pole structure of   $\Res_{t_4=t_3}\Res_{t_2=t_1}\Omega$ as $t_3$ approaches some other variable can be controlled  by the pole structure of  $\Res_{t_4=t_3}\Omega$. The lemmas proved in Section \ref{IKEA} are crucial to this step.

\item If the lowest degree term of $\Omega$ for a given stratum $S$ defined by $t_1=t_2=\dots =t_L$ is holomorphic along $t_1=t_a$ for all $a \in \{2,\cdots, L\}$, then the lowest degree term of $\Res_{t_b=t_a}\Omega$ for the new stratum  $S^*$ remains holomorphic along $t_1=t_a$, where $S^*$ is  obtained by removing $t_b$ from the stratum $S$, and $a,b \in \{2,3,\dots, L\}$.

\end{enumerate}
We break up the proof into several steps. Let $S=S_1$ be a stratum of the form $t_1=\dots=t_L$ (a stratum of type $(S1)$).
\subsection{Step I}
Let $\beta(1)=\alpha_1$ and assume that $\Omega$ has a pole along $t_1=t_2$ and  $(t_1-t_2)$ does not divide the lowest degree term of $\Omega$ for the stratum $S$. We take a residue along $t_1=t_2$ to get a form $\Res_{t_1=t_2}\Omega$ and a new stratum $S_2$ defined by $t_1=t_3=\cdots =t_L$. By Lemma ~\ref{grandres}, $d^{S}(\Omega)=d^{S_2}(\Res_{t_1=t_2}\Omega)$. By Theorem \ref{controlpoles}, we know that $\Res_{t_2=t_1}\Omega$ has at most simple poles as $t_1$ approaches the remaining variables $t_a$ for $a=3,\dots, L$.

\subsection{Step II}We continue taking residues with the new form $\Res_{t_2=t_1}\Omega$ and the same variable $t_1$ along the stratum $S_2$. The simplicity of the poles of along $t_1=t_a$ where $t_a$ is any remaining variable is guaranteed by Theorem ~\ref{controlpoles}. When we cannot take any more residues,  we get a form $\Omega_k=\Res_{\vec{T}}\Omega$ and a stratum $S_k$, where $T$ denotes the ordered set of variables that got together during the residue process.

The lowest degree term of $\Res_{\vec{T}}\Omega$ for the stratum $S_k$ does not have a pole along $t_1=t_a$ where $a\in [L]\backslash T$. Also Lemma ~\ref{grandres} gives $d^{S}(\Omega)=d^{S_k}(\Res_{\vec{T}}\Omega)$. Let $b,c \in [L]\backslash T$, then the pole structure of $\Res_{\vec{T}}\Omega$ along $t_b=t_c$ is controlled by the poles structure of $\Omega$ along $t_b=t_c$ as in Theorem ~\ref{controlpoles}.

\subsection{Step III} We repeat Step I, Step II to the form $\Omega_k=\Res_{\vec{T}}\Omega$ and the stratum $S_k$ starting with a new variable. We keep taking residues along diagonals of the form $t_a=t_b$ unless all variables of all colors are exhausted. At the end we get a form $\Omega_n$ and a stratum $S_n$ such that the lowest degree term of $\Omega_n$ for the stratum $S_n$ is holomorphic. Then by Lemma ~\ref{grandres}, we get $d^{S}(\Omega)=d^{S_n}(\Omega_n)$. Thus $d^{S}(\Omega)\geq 0$.

 The proof that the logarithmic degree along any stratum of type $(S2)$: $t_1=\dots=t_L=z_1$ is non-negative follows similarly.
Note that we {\em do not} take residues along $t_k=z_1$. At the last step we will have set of surviving $t$ variables.
There are no poles in the lowest degree term when two of these variables are set together, and only (at most) a simple pole as one of them is  set equal to $z_1$ (Proposition ~\ref{controliter}). The logarithmic degree is easily seen to be non-negative.



\subsection{Some reductions in Theorem ~\ref{controlpoles}}
We will show that Theorem  ~\ref{controlpoles} reduces to the verification of Proposition \ref{redux}  below (under the assumption of square-integrability of $\mathcal{R}\Omega$).
Suppose $\Omega'=\Res_{\vec{T}}\Omega$ where $T=\{1,2,\dots, \ell\}$ and $p>\ell$.
\begin{proposition}\label{redux}
Assume $\beta(1)+\dots+\beta(\ell)$ is a positive root and $\beta(1)+\dots+\beta(\ell)+\beta(p)$ is not a {\em positive} root.
Then, $\Omega'=\Res_{\vec{T}}\Omega$ is regular along $t_1=t_p$.
\end{proposition}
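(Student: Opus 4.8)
The plan is to derive Proposition~\ref{redux} from Proposition~\ref{v8}(1): that statement reduces the claim to the single inequality $d^{S}(\Omega)>0$, where $S$ is the stratum $t_1=\dots=t_\ell=t_p$. We may assume $\Res_{\vec{T}}\Omega\neq 0$, since otherwise there is nothing to prove. Put $\gamma=\beta(1)+\dots+\beta(\ell)$, a positive root by hypothesis. The first input is Lie-theoretic: since $\gamma$ is a root and $\gamma+\beta(p)$ is not, the $\beta(p)$-string through $\gamma$ has no terms strictly above $\gamma$, so $\tfrac{2(\gamma,\beta(p))}{(\beta(p),\beta(p))}$ is a nonnegative integer; in particular $(\gamma,\beta(p))\ge 0$.

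Since $\mathcal{R}\Omega$ is square integrable we have $d^{S}(\mathcal{R}\Omega)>0$, and because $S$ is a stratum of type (S1) --- the variables collide at a moving point, so the factors $(t_a-z_j)$ of $\mathcal{R}$ are units along $S$ --- one has
\[
d^{S}(\mathcal{R}\Omega)\;=\;d^{S}(\Omega)-\frac{1}{\kappa}\,\Sigma,\qquad
\Sigma:=\sum_{\substack{a<b\\ a,b\in\{1,\dots,\ell,p\}}}(\beta(a),\beta(b)).
\]
As $d^{S}(\Omega)$ is an integer by the definition of logarithmic degree, $\Sigma\ge 0$ would give $d^{S}(\Omega)>0$ and finish the proof. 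Using $2\Sigma=(\gamma,\gamma)-\sum_{a\le\ell}(\beta(a),\beta(a))+2(\gamma,\beta(p))$ together with $(\gamma,\beta(p))\ge 0$, it remains to bound $\sum_{a\le\ell}(\beta(a),\beta(a))-(\gamma,\gamma)$ against $2(\gamma,\beta(p))$. This I would verify case by case for $\frg$ classical and $G_2$: list, up to the Weyl group, the positive roots $\gamma$ admitting a simple root $\alpha$ with $\gamma+\alpha$ not a root, and check $\Sigma\ge 0$ for every ordered decomposition $\gamma=\beta(1)+\dots+\beta(\ell)$ realizable by a nonzero iterated residue (which forces $(\beta(a),\beta(b))<0$ at each successive collision, constraining the orderings). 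In the bulk of the configurations $\Sigma\ge 0$ holds.

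The main obstacle is that $\Sigma$ can be negative in borderline configurations --- e.g. in type $A_3$ with $\gamma=\alpha_1+\alpha_2+\alpha_3$, ordered decomposition $(\alpha_1,\alpha_2,\alpha_3)$ and $\beta(p)=\alpha_1$, where $\Sigma=-1$. There the degree count, combined with Lemma~\ref{dualb}, only yields $d^{S}(\Omega)\ge 0$, hence by Proposition~\ref{v8}(2) only that $\Omega'=\Res_{\vec{T}}\Omega$ has \emph{at most} a simple pole along $t_1=t_p$; one must then show the Poincar\'e residue $\Resid_{t_1=t_p}\Omega'$ vanishes. For this I would use that $\Omega$ has only simple poles along every diagonal $t_a=t_b$ (Lemma~\ref{observation}(1)): passing to the lowest degree term along the stratum $t_1=\dots=t_\ell=t_p$ and tracking it under residuation (Lemmas~\ref{logeq} and~\ref{division}), the iterated residue $\Resid_{t_1=t_p}\Res_{\vec{T}}\Omega$ can be re-expressed through collisions of $t_1,\dots,t_\ell,t_p$ taken in a different order, and the hypothesis ``$\gamma+\beta(p)$ is not a root'' is then used, type by type, to arrange an order in which one of the required identifications is a diagonal $t_a=t_b$ with $(\beta(a),\beta(b))\ge 0$ --- along which $\Omega$ is regular by Lemma~\ref{observation}(2) --- forcing the residue to be $0$. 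This reflects the fact that, morally, the leading coefficient of $\Omega'$ at $t_1=t_p$ is controlled by a bracket in $\frg$ whose value lies in the zero root space $\frg_{-(\gamma+\beta(p))}$. Establishing that such a reordering is always available is the delicate, case-by-case heart of the argument; granting Proposition~\ref{redux}, part~(3) of Theorem~\ref{controlpoles} follows from parts~(1),(2) as shown above, and the inductive step proving part~(1) follows as well (if $\beta(1)+\dots+\beta(\ell)$ were not a root, Proposition~\ref{redux} with $p=\ell$ would make $\Res_{t_{\ell-1}=t_1}\cdots\Res_{t_2=t_1}\Omega$ regular along $t_1=t_\ell$, contradicting $\Res_{\vec{T}}\Omega\neq 0$).
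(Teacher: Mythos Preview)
Your reduction via Proposition~\ref{v8} and the root-string inequality $(\gamma,\beta(p))\ge 0$ is correct, and the formula $d^{S}(\mathcal{R}\Omega)=d^{S}(\Omega)-\Sigma/\kappa$ is fine. The difficulty is that the ``borderline'' cases where $\Sigma<0$ are not exceptional: already in type $A$, with $\gamma=\alpha_1+\cdots+\alpha_\ell$ (ordered decomposition $\alpha_1,\ldots,\alpha_\ell$) and $\beta(p)=\alpha_1$, one has $\Sigma=2-\ell$, negative for every $\ell\ge 3$. So the bulk of the work lies in your fallback argument, and that is where the gap is.

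You propose to show $\Resid_{t_1=t_p}\Res_{\vec{T}}\Omega=0$ by ``re-expressing the iterated residue through collisions taken in a different order'' so as to bring forward a diagonal $t_a=t_b$ with $(\beta(a),\beta(b))\ge 0$. But the residues involved here are all of the form $\Res_{t_a=t_1}$ with a common variable $t_1$; they are \emph{not} along disjoint divisors, so Lemmas~\ref{iter1}--\ref{iter4} do not apply and no such reordering is available in general. Lemmas~\ref{logeq} and~\ref{division} track only the lowest-degree term and compare logarithmic degrees; they do not let you replace $\Res_{t_\ell=t_1}\cdots\Res_{t_2=t_1}$ by a sequence of residues along different diagonals. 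The heuristic that the leading term is ``a bracket in $\frg_{-(\gamma+\beta(p))}$'' is suggestive but does not translate into a statement about these particular iterated Poincar\'e residues.

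The paper's argument is organized differently. It proceeds by induction on $\ell$ and splits according to $\beta(p)$. When $\beta(p)$ is not one of the two simple roots adjacent (in the Dynkin sense) to the last root added, neither $\Res_{t_{\ell-1}=t_1}\cdots\Res_{t_2=t_1}\Omega$ nor $\Omega$ has a pole involving $t_p$ with the relevant variable, and Lemma~\ref{lemmefondamental} (no new poles are created by a residue along a simple pole) propagates regularity through the last residue; this is exactly what handles the $A_n$, $\beta(p)=\alpha_1$ family above. For the genuinely delicate adjacent cases (e.g.\ $\beta(p)\in\{\alpha_{\ell-1},\alpha_\ell\}$ in type $A$), the paper multiplies by an explicit correction factor $\mathcal{P}$, uses the symmetry of $\widetilde{\Omega}=\mathcal{P}\Omega$ in equal-color variables and the vanishing from Lemma~\ref{observation}(5),(6), and then either extracts enough factors of $(t_1-t_p)$ directly (as in Section~\ref{argu1}) or invokes the degree lemmas (Lemmas~\ref{mindeg1}--\ref{mig4}) to force $d^{S}(\Omega)>0$ and conclude via Proposition~\ref{v8}(1). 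Your proposal does not supply either ingredient: the induction/Lemma~\ref{lemmefondamental} mechanism for the ``far'' colors, or the explicit symmetry-and-vanishing analysis (and the degree lemmas) for the adjacent colors.
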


\subsubsection{}\label{recap}
We will show that (3) of Theorem ~\ref{controlpoles} is immediate from (1) of Theorem ~\ref{controlpoles} and some  Lie algebra considerations.

Consider the stratum $S:t_1=t_2=\dots=t_L=z_1$. Our square-integrability assumption implies that $d^S(\mathcal{R}\Omega)> 0$, and $d^{S}(\Omega)\leq d^{S'}(\Res_{\vec{T}}\Omega)$ (by Lemma ~\ref{grandres}) with $S'$ the stratum $t_1=z_1$. Let $\gamma=\sum_{a=1}^{\ell}\beta(a)$.

Now
$$0<d^S(\mathcal{R}\Omega)= \frac{(\lambda_1,\gamma)}{\kappa}- \sum_{1\leq a<b\leq L} \frac{(\beta(a),\beta(b))}{\kappa} +d^{S}(\Omega).$$ We know that $\gamma$ is a positive root by (1) of Theorem ~\ref{controlpoles}. Using $(\lambda_1,\gamma)\leq k$ (since $\gamma$ is a root and $\lambda_1$ is of level $k$) and Lemma ~\ref{dualb} below, we see that from the above inequality, one gets
 $$0< \frac{k+g^*}{\kappa}+ d^{S}(\Omega)=1+ d^S(\Omega)\leq 1+ d^{S'}(\Res_{\vec{T}}\Omega).$$
 Therefore $d^{S'}(\Res_{\vec{T}}\Omega)>-1$, and this proves (3).
\begin{lemma}\label{dualb}
 Consider a positive root $\gamma=\sum_{i=1}^n{\delta_i}$, where $\delta_i$'s are positive simple roots (possibly repeated). Then,
$$\sum_{1\leq i<j\leq n} (\delta_i, \delta_j) > -g^*.$$
\end{lemma}
\begin{proof} Using Lemma 6.1 from \cite{B},
$$2\sum_{1\leq i<j\leq n} (\delta_i, \delta_j) =(\gamma,\gamma)-\sum_{i=1}^n (\delta_i,\delta_i)>(\gamma,\gamma)-2g^* > -2g^*.$$

\end{proof}
Suppose $\Omega'=\Res_{\vec{T}}\Omega$ is as in Theorem ~\ref{controlpoles}, where $T=\{1,2,\dots, \ell\}$ and $p>\ell$. The final step in showing that Theorem  ~\ref{controlpoles} reduces to the verification of Proposition \ref{redux}  is part (2) of the following:
\begin{proposition}\label{v8}Consider the stratum $S:t_1=\dots=t_{\ell}=t_p$.
\begin{enumerate}

\item If $d^S(\Omega)\geq 0$ (resp. $>0$), or equivalently the degree of $\Omega$ on $S$ is  $\geq -\ell$ (resp $>-\ell$), then $\Omega'$ has at most a simple pole (resp. holomorphic) along $t_1=t_p$.
\item If $\mathcal{R}\Omega$ is square integrable and $\beta(1)+\dots+\beta(\ell)+\beta(p)$ is a positive root, then $\Omega'$ has at most a simple pole along $t_1=t_p$.
\end{enumerate}
\end{proposition}
\begin{proof} Use the inequality $d^{S}(\Omega)\leq d^{S'}(\Res_{\vec{T}}\Omega)$ (by Lemma ~\ref{grandres})
where $S'$ is the stratum $t_1=t_p$. This shows (1).

For (2), we have $d^S(\mathcal{R}\Omega)>0$. Set $p=\ell+1$. Since $\sum_{a=1}^{\ell+1}\beta(a)$ is a positive root, by Lemma \ref{dualb},
$$0<d^S(\mathcal{R}\Omega)=d^{S}(\Omega) - \sum_{1\leq a<b\leq \ell+1} \frac{(\beta(a),\beta(b))}{\kappa} < d^S(\Omega)+ \frac{g^*}{\kappa}. $$
So $d^S(\Omega)> -\frac{g^*}{\kappa}$, and hence $d^S(\Omega)\geq 0$ and we can use (1).
\end{proof}

\subsection{Proposition \ref{redux} implies Theorem  ~\ref{controlpoles}} By the argument in Section \ref{recap}, we only need to deduce (1) and (2) of Theorem  ~\ref{controlpoles}
from Proposition \ref{redux}. Both are by induction on $\ell$, the base cases are covered by Lemma \ref{observation} ($\ell=1$). For the induction step for (1), we use
Proposition \ref{redux}: First we note that $\Res_{\vec{T}}\Omega\neq 0$, $T=\{1,2,\dots,\ell\}$, and hence by induction, $\sum_{i=1}^{\ell}\beta(i)$ is a simple root.  If $\sum_{i=1}^{\ell+1}\beta(i)$ is not a root, then by Proposition \ref{redux}, $\Res_{\vec{T}}\Omega$ is non-zero  is regular along $t_1=t_{\ell+1}$ and hence a further residue along $t_1=t_{\ell+1}$ produces zero.

For the induction step for (2), we divide into two cases. The first case is if  $\gamma=\sum_{i=1}^{\ell+1}\beta(i)+\beta(p)$ is a positive root, and handled using Proposition \ref{v8}. The second case is when $\gamma$ is not a positive root, which follows from Proposition \ref{redux}.

\subsection{}The proof of Proposition ~\ref{redux} is case by case. We will use the Bourbaki notation for Lie algebras.
\section{Proposition ~\ref{redux} for $\frg=\mathfrak{sl}(n+1)$}
\subsection{The case $\frg=\mathfrak{sl}(2)$} This case is immediate, because (by Lemma ~\ref{observation}) there are no poles for $\Omega$ at $t_a=t_b$.

\subsection{The case $\frg=\mathfrak{sl}(3)$}

Let $\alpha_1$, $\alpha_2$ (the non-simple root is $\alpha_1+\alpha_2$) denote the positive simple roots.
Suppose $\beta(1)=\alpha_1$ and $\beta(2)=\alpha_2$. Proposition \ref{redux} in this case follows from Lemma ~\ref{observation} and the following:
\begin{proposition}\label{madampsychosis}
The form $\Omega'=\Res_{t_2=t_1}\Omega$ in  $t_1, t_3, \dots,t_M$,  has no poles
as $t_1=t_a$ for any $a\in \{3,\dots,M\}$.
\end{proposition}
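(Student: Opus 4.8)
The plan is to obtain the regularity of $\Res_{t_2=t_1}\Omega$ from a direct estimate of logarithmic degrees, in the style of the proof of Lemma~\ref{observation}, combined with the transfer Lemma~\ref{grandres2}. First, by Lemma~\ref{observation}(1) the form $\Omega$ has at most a simple pole along $t_1=t_2$; if it is regular there then $\Res_{t_2=t_1}\Omega=0$ and there is nothing to prove, so assume $\Omega$ has exactly a simple pole along $t_1=t_2$. Fix $a\in\{3,\dots,M\}$, let $S$ be the type $(S1)$ stratum $t_1=t_2=t_a$, and let $S^{*}\subseteq\mathbb{A}^{[M]\setminus\{2\}}$ be its image, the diagonal $t_1=t_a$, after the residue. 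Lemma~\ref{grandres2} applied to the simple pole of $\Omega$ along $t_1=t_2$ gives $d^{S}(\Omega)\le d^{S^{*}}(\Res_{t_2=t_1}\Omega)$. Since $S^{*}$ is a codimension-one diagonal, $\Res_{t_2=t_1}\Omega$ is regular along $t_1=t_a$ exactly when $d^{S^{*}}(\Res_{t_2=t_1}\Omega)\ge 1$, so it suffices to prove $d^{S}(\Omega)\ge 1$; as $d^{S}(\Omega)$ is an integer, this is the same as $d^{S}(\Omega)>0$.

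For this I use square integrability of $\mathcal{R}\Omega$ along $S$. Because $S$ is a moving stratum, the factors $(t_c-z_j)$ of $\mathcal{R}$ are units at the generic point of $S$, so, exactly as in Lemma~\ref{observation},
$$0<d^{S}(\mathcal{R}\Omega)=d^{S}(\Omega)-\frac{1}{\kappa}\Bigl((\beta(1),\beta(2))+(\beta(1),\beta(a))+(\beta(2),\beta(a))\Bigr).$$
Now $\beta(1)=\alpha_1$, $\beta(2)=\alpha_2$, and $\beta(a)\in\{\alpha_1,\alpha_2\}$, so in either case the three pairwise inner products are $-1,\,2,\,-1$ in some order and their sum is $0$; equivalently $\beta(1)+\beta(2)+\beta(a)$ equals $2\alpha_1+\alpha_2$ or $\alpha_1+2\alpha_2$, of squared length $6=2g^{*}$ for $\mathfrak{sl}(3)$, so the correction term disappears (compare Lemma~\ref{dualb}). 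Hence $d^{S}(\Omega)=d^{S}(\mathcal{R}\Omega)>0$, which finishes the proof.

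The only Lie-theoretic input is the elementary fact that $\alpha_1+\alpha_2$ is a root of $\mathfrak{sl}(3)$ while $2\alpha_1+\alpha_2$ and $\alpha_1+2\alpha_2$ are not; in the language of Proposition~\ref{redux} this is the case $\ell=2$, $\gamma=\beta(1)+\beta(2)$, and indeed Proposition~\ref{madampsychosis} is precisely the content of Proposition~\ref{redux} for $\mathfrak{sl}(3)$, since $\beta(1)+\dots+\beta(\ell)$ can be a positive root only for $\ell\le 2$. The one step demanding a little care is the bookkeeping of logarithmic degrees through the residue (supplied by Lemma~\ref{grandres2}) and the harmless relabelling of the spectator variables $t_3,\dots,\widehat{t_a},\dots,t_M$ when $a>3$; neither is a genuine obstacle.
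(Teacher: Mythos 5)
Your argument is correct, and the underlying estimate is the same one the paper uses: square integrability on the three-variable stratum $t_1=t_2=t_a$, where the pairwise inner products of $\{\alpha_1,\alpha_2,\beta(a)\}$ sum to zero, forces $d^S(\Omega)>0$. Where you diverge is in how this is converted into regularity of the residue: the paper multiplies by the correction factor $(t_1-t_2)(t_1-t_a)\,$, invokes Lemma~\ref{observation}(6) to see that the product vanishes on the triple diagonal, and then divides out $(t_1-t_a)$ explicitly, whereas you route the same information through the degree calculus, i.e.\ Lemma~\ref{grandres2} together with integrality of $d^S(\Omega)$ --- which is exactly Proposition~\ref{v8}(1) specialized to $\ell=2$, $p=a$. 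The two are equivalent in substance (Lemma~\ref{observation}(6) is itself proved by the same inner-product computation you carry out), but your packaging is the one the paper adopts systematically for the higher-rank cases, and it has the advantage of making transparent why $\mathfrak{sl}(3)$ is easy: for the triples occurring here the correction term $\sum_{i<j}(\beta(i),\beta(j))/\kappa$ vanishes identically, so no polynomial-vanishing lemma in the style of Lemmas~\ref{mindeg1}--\ref{mindeg2} is needed. Your closing observation that this is precisely Proposition~\ref{redux} for $\mathfrak{sl}(3)$ (where $\beta(1)+\cdots+\beta(\ell)$ can be a positive root only for $\ell\le 2$) is also accurate.
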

\begin{proof}
Suppose $\beta(3)=\alpha_2$. Then $\widetilde{\Omega}=(t_1-t_2)(t_1-t_3)\Omega$
is holomorphic at the generic point of $t_1=t_2$, and that of $t_1=t_3$ (also $t_2=t_3$); and vanishes
at $t_1=t_2=t_3$ by Lemma ~\ref{observation}. Therefore $\Omega'=\frac{\widetilde{\Omega}(t_1,t_1,t_3)}{(t_1-t_3)}$
which is regular at $t_1=t_3$. The proof when $\beta(3)=\alpha_1$ is similar.
\end{proof}
\subsection{The case $\frg=A_n=\mathfrak{sl}(n+1), n >2$}

We will follow the pattern of the case $\mathfrak{sl}(3)$. The simple roots are $\alpha_1,\dots,\alpha_n$ and the positive roots are of the form $\alpha_i + \alpha_{i+1} + \cdots  +\alpha_{j}$, where $i < j$.
We will have variables $t_1,\dots,t_M$ colored by the simple roots $\alpha_1,\dots,\alpha_n$. Suppose $t_1, \dots, t_{\ell}$ have colors $\alpha_1,\dots,\alpha_{\ell}$, for some $\ell\leq n$.
Then we want to prove the following:
\begin{proposition}\label{controlpolsl} The form
$$\Omega'=\Res_{t_{\ell}=t_{1}}\cdots\Res_{t_3=t_1}\Res_{t_2=t_1}\Omega $$ has poles along
$t_{1}=t_p$ only if $\ell\leq n-1$ and $\beta(p)=\alpha_{\ell+1}$. (In this case the pole is simple by Proposition
~\ref{v8}).
\end{proposition}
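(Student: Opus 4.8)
The strategy follows the template established for $\mathfrak{sl}(3)$ in Proposition~\ref{madampsychosis}, generalizing the ``vanishing on the small diagonal'' trick of Lemma~\ref{observation}. First I would set up notation: let $T=\{1,\dots,\ell\}$, with $t_a$ colored by $\alpha_a$ for $a\le\ell$, so that $\beta(1)+\dots+\beta(\ell)=\alpha_1+\dots+\alpha_\ell$ is the positive root of $A_n$ supported on the first $\ell$ nodes. By Proposition~\ref{redux}, it suffices to show that $\Omega'=\Res_{\vec T}\Omega$ is regular along $t_1=t_p$ whenever $\alpha_1+\dots+\alpha_\ell+\beta(p)$ is \emph{not} a positive root. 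In $A_n$ the positive roots are exactly the intervals $\alpha_i+\dots+\alpha_j$; so $\alpha_1+\dots+\alpha_\ell+\beta(p)$ is a positive root precisely when $\ell\le n-1$ and $\beta(p)=\alpha_{\ell+1}$ (the only way to extend the interval $[1,\ell]$ by a single simple root and stay an interval, since $\alpha_0$ does not exist). In every other case $(\beta(p),\alpha_1+\dots+\alpha_\ell)\ge 0$: either $\beta(p)=\alpha_q$ with $q>\ell+1$ or $q\le\ell$ already appearing, giving inner product $0$, or $\beta(p)=\alpha_\ell$ in which case the sum $\alpha_1+\dots+\alpha_{\ell-1}+2\alpha_\ell$ is not a root. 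The one genuinely delicate subcase, as in $\mathfrak{sl}(3)$, is $\beta(p)=\alpha_\ell$ (inner product $-1$ with $\alpha_{\ell-1}+\alpha_\ell$ but the total sum still not a root).

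\textbf{Main argument via a correction factor.} The plan is to produce, for the stratum $S:t_1=\dots=t_\ell=t_p$, a monomial correction factor $\mathcal{P}$ supported on the diagonals $t_a=t_b$ with $a,b\in T\cup\{p\}$, so that $\widetilde\Omega=\mathcal{P}\,\Omega$ is holomorphic at the generic point of $S$ and, by part (6) of Lemma~\ref{observation} together with Theorem~\ref{controlpoles}(2) applied along the way, \emph{vanishes} on $S$ with enough order. Concretely: each $t_a$, $a\in T$, meets $t_1$ with at most a simple pole (Lemma~\ref{observation}(1) and, after partial residuation, Theorem~\ref{controlpoles}(2)); among $t_1,\dots,t_\ell$ consecutive colors $\alpha_a,\alpha_{a+1}$ force a simple pole and non-consecutive ones force regularity (Lemma~\ref{observation}(2), since $(\alpha_a,\alpha_b)=0$ for $|a-b|\ge2$ and $=2>0$ irrelevant here). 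For $\beta(p)=\alpha_\ell$: the factor $(t_1-t_2)(t_2-t_3)\cdots(t_{\ell-1}-t_\ell)(t_\ell-t_p)$ clears all poles along $S$, and the resulting $\widetilde\Omega$ vanishes at $t_1=\dots=t_\ell=t_p$ because the two variables $t_\ell,t_p$ of color $\alpha_\ell$ satisfy the hypothesis $m=2\ge 1-\frac{2(\alpha_\ell,\alpha_\ell)}{(\alpha_\ell,\alpha_\ell)}\cdot(-1)$... more precisely $m\ge 1-2(\beta,\alpha)/(\alpha,\alpha)$ with $\beta$ the combined color of the other variables approaching, giving the needed vanishing. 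Then writing $\Omega'=\Res_{\vec T}\Omega$ and computing the iterated residue as in Proposition~\ref{madampsychosis}, each residue along $t_{a}=t_1$ divides by one factor $(t_1-t_{a})$ of $\mathcal{P}$, and the surviving factor $(t_1-t_p)$ in the denominator is cancelled by the vanishing of $\widetilde\Omega$ on the small diagonal, exactly as in the $\mathfrak{sl}(3)$ computation $\Omega'=\widetilde\Omega(t_1,t_1,t_3)/(t_1-t_3)$.

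\textbf{Bookkeeping and the main obstacle.} The remaining work is to check that the iterated residue $\Res_{\vec T}$ can indeed be taken along poles of the (successive) lowest-degree terms so that Lemmas~\ref{logeq}, \ref{division}, \ref{grandres1} apply and no spurious poles appear along $t_1=t_a$ for intermediate $a\in T$; this is where Proposition~\ref{controliter} and the graph-theoretic description of log forms (the ``rays'' picture) are used to organize which color-$\alpha_a$ variables can be residuated at which stage. I expect the main obstacle to be precisely the $\beta(p)=\alpha_\ell$ subcase and, more generally, bounding the logarithmic degree of $\widetilde\Omega$ on $S$ from below: one must verify that the square-integrability of $\mathcal{R}\Omega$, fed through the degree inequality $d^S(\Omega)\le d^{S'}(\Res_{\vec T}\Omega)$ of Lemmas~\ref{grandres1}–\ref{logeq} and the estimate $\sum_{a<b}(\beta(a),\beta(b))>-g^*=-(n+1)$ of Lemma~\ref{dualb} specialized to the interval root $\alpha_1+\dots+\alpha_\ell+\alpha_\ell$, forces $d^S(\Omega)\ge 0$, hence regularity of $\Omega'$ along $t_1=t_p$ by Proposition~\ref{v8}(1). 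The combinatorics is lighter than for $D_n$ or $B_n/C_n$ because in type $A$ every positive root is multiplicity-free, so no variable-color appears with multiplicity $\ge2$ inside the root $\alpha_1+\dots+\alpha_\ell$, and the only multiplicity-$2$ situation is the single pair $\{t_\ell,t_p\}$ handled above.
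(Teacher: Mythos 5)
Your overall architecture agrees with the paper's: induction on $\ell$, Lemma~\ref{lemmefondamental} to dispose of all $\beta(p)\notin\{\alpha_{\ell-1},\alpha_\ell,\alpha_{\ell+1}\}$, Proposition~\ref{v8} for the simplicity of the pole when $\beta(p)=\alpha_{\ell+1}$, and correction factors combined with the partial-diagonal vanishings of Lemma~\ref{observation}(5),(6) for the remaining adjacent colors. The gap is that you declare $\beta(p)=\alpha_\ell$ to be ``the one genuinely delicate subcase'' and never treat $\beta(p)=\alpha_{\ell-1}$, which is in fact the harder case and the one on which the paper spends its effort (Section~\ref{argu1}). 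When $\beta(p)=\alpha_{\ell-1}$, the variable $t_p$ is Dynkin-adjacent to \emph{two} of the colliding variables, $t_{\ell-2}$ and $t_\ell$, so the correction factor for the stratum $S\colon t_1=\dots=t_\ell=t_p$ contains two linear forms involving $t_p$, and the iterated residue acquires a denominator $(t_1-t_p)^2$. One must then prove that $\widetilde\Omega(t_1,\dots,t_1,t_p)$ vanishes to \emph{second} order at $t_p=t_1$; the ``vanishing on the small diagonal'' that your argument produces gives only first-order vanishing. The paper obtains the second order by exploiting the $\Sigma$-symmetry of $\widetilde\Omega$ in the two variables $t_{\ell-1},t_p$ of color $\alpha_{\ell-1}$: subtracting the evaluation at the midpoint $\tfrac{t_{\ell-1}+t_p}{2}$ and combining two distinct partial-diagonal vanishings to write $\widetilde\Omega=(t_{\ell-1}-t_p)^2A+\bigl(\tfrac{t_{\ell-1}+t_p}{2}-t_{\ell-2}\bigr)\bigl(\tfrac{t_{\ell-1}+t_p}{2}-t_\ell\bigr)B$, from which the order-two vanishing of the specialization is read off. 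Without this step the induction does not close.

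Your proposed fallback, bounding $d^S(\Omega)$ from below via square-integrability and Lemma~\ref{dualb}, cannot repair this: for $\beta(p)\in\{\alpha_{\ell-1},\alpha_\ell\}$ the inequality only yields $d^S(\Omega)\geq 0$ (for $\beta(p)=\alpha_{\ell-1}$ one gets $d^S(\Omega)>-(\ell-1)/\kappa$), and by Proposition~\ref{v8}(2) that gives at most a \emph{simple} pole; regularity requires the strict inequality $d^S(\Omega)>0$ of Proposition~\ref{v8}(1), which is exactly what fails on these strata and what the polynomial argument is there to supply. A smaller slip: for $\beta(p)=\alpha_\ell$ the factor of $\mathcal{P}$ involving $t_p$ must be $(t_{\ell-1}-t_p)$ (colors $\alpha_{\ell-1},\alpha_\ell$, a genuine pole divisor), not $(t_\ell-t_p)$ (equal colors, hence no pole by Lemma~\ref{observation}(2)); with your $\mathcal{P}$ the product $\mathcal{P}\Omega$ is not regular at the generic point of $S$.
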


\begin{proof}
The proof is by induction on $\ell$, for $\ell=1$ (there are no residue operations), the statement is just
that $\Omega$ has poles along $t_1=t_p$ only if the color of $p$ is $\alpha_2$ ($(\alpha_1,\alpha_p)=0$ if
$p\geq 2$) which follows from Lemma ~\ref{observation}.
\subsubsection{The case $\ell=2$}
Let $\Omega'=\Res_{t_2=t_1}\Omega$. The proof that $\Omega'$ does not have a pole at $t_1=t_p$ if $\beta(p)\in \{\alpha_1,\alpha_2\}$ is similar to the $\mathfrak{sl}(3)$ case.

Let $\beta(p)\not\in \{\alpha_1, \alpha_2, \alpha_3\}$, then $\Omega$  is holomorphic at the generic point of $t_1=t_p$ and also at the generic point of $t_2=t_p$ ($\alpha_1+\beta(p)$ and $\alpha_2+\beta(p)$ are not roots). Thus by Lemma ~\ref{lemmefondamental}, we get $\Omega'$ does not have a pole on $t_1=t_p$.

\subsubsection{The case $\ell=3$}

Suppose $\beta(p) \not\in \{\alpha_2, \alpha_3, \alpha_4\}$, using the case $\ell=2$ we know that $\Res_{t_2=t_1}\Omega$ has no poles as $t_1=t_p$ and $t_3=t_p$. Hence by Lemma ~\ref{lemmefondamental}, $\Omega'$ is holomorphic along $t_1=t_p$.
\subsubsection{The case when $\ell=3$ and $\beta(p)=\alpha_2$}\label{argu1}
We will now show that $\Omega'$ is holomorphic along $t_1=t_p$ if $\beta(p)=\alpha_2$. Consider  $\widetilde{\Omega}=(t_1-t_2)(t_2-t_3)(t_1-t_p)(t_p-t_3)\Omega(t_1,t_2,t_3,t_p)$ which is regular at the generic point of $t_1=t_2=t_3=t_p$.

We consider $\widetilde{\Omega}(t_1,t,t_3,t)$. By Lemma ~\ref{observation}, $\widetilde{\Omega}$ vanishes when $t=t_1$ and $t=t_3$ and is hence divisible by $(t-t_1)(t-t_3)$ (i.e., the quotient is holomorphic at the generic point of $t=t_1=t_3$). One may
multiply by appropriate correction factors and view $\widetilde{\Omega}$ as a polynomial in the variables $t_i$.
At this point we view $\widetilde{\Omega}$ as a function (i.e., divide by $dt_1\dots dt_M$)

Next, look at $$\widetilde{\Omega}(t_1,t_2,t_3,t_p)-\widetilde{\Omega}(t_1, \frac{t_2+t_p}{2},t_3,\frac{t_2+t_p}{2})$$
which vanishes at $t_2=t_p$, is symmetric in $t_2,t_p$ and is hence divisible by $(t_2-t_p)^2$. We may therefore write
$$\widetilde{\Omega}(t_1,t_2,t_3,t_p)= (t_1-t_p)^2 (A) - (\frac{t_2+t_p}{2}-t_1)(\frac{t_2+t_p}{2}-t_3)(B).$$ The residue $\Omega'= \frac{\widetilde{\Omega}(t_1,t_1,t_1,t_p)}{(t_1-t_p)^2}$ and by the previous equation, the numerator vanishes at $t_1=t_p$ to the second order. Thus $\Omega'$ is holomorphic along $t_1=t_p$.

\subsubsection{The case when $\ell=3$ and $\beta(p)=\alpha_3$} We will show that $\Omega'$ does not have any poles as $t_1=t_p$  if $\beta(p)=\alpha_3$.
Consider
$\widetilde{\Omega}=(t_2-t_p)(t_2-t_3)(t_1-t_2)\Omega(t_1,t_2,t_3,t_p)$. Lemma ~\ref{observation} implies $\widetilde{\Omega}(t_1,t,t,t)=0$. Hence we can conclude that $\widetilde{\Omega}(t_1,t_2,t,t)$ is divisible by $(t-t_2)$. Now as before we look at
$$ \widetilde{\Omega}(t_1,t_2,t_3, t_p)- \widetilde{\Omega}(t_1, t_2, \frac{t_3+t_p}{2},\frac{t_3+t_p}{2}),$$ which vanishes at $t_3=t_p$ and is symmetric in $t_3$ and $t_p$. Thus we may write
$$\widetilde{\Omega}(t_1, t_2, t_3, t_p) = (t_p-t_3)^2(A) + (\frac{t_3+t_p}{2}-t_2)(B).$$ Thus $\Omega'=\frac{\widetilde{\Omega}(t_1,t_1,t_1,t_p)}{(t_1-t_p)}$ is holomorphic along $t_1=t_p$.

\subsubsection{The case of arbitrary $\ell$}By induction assume that the proposition is true for $\ell-1$.

Let $\Omega'$ be the iterated residue $\Res_{t_{\ell}=t_1}\cdots\Res_{t_2=t_1}\Omega$. Lemma ~\ref{lemmefondamental}  ensures that whenever $\beta(p)\neq \{ \alpha_{(\ell-1)}, \alpha_{\ell}, \alpha_{(\ell+1)}\}$, the form $\Omega'$ is holomorphic along $t_1=t_p$.

Consider the case when $\beta(p)\in\{\alpha_{(\ell-1)}, \alpha_{\ell}\}$. Using the same techniques as in case $\ell=3$ and $\beta(p)\in\{\alpha_2,\alpha_3\}$, we can show that $\Omega'$ is holomorphic along $t_1=t_p$.

\end{proof}
The proof of Proposition ~\ref{redux} for $\frg=\mathfrak{sl}(n+1)$ is now complete.
\begin{remark}
Our proof assumes that $t_1$ is colored by the simple root $\alpha_1$. The same argument works even if $t_1$ is colored by any $\alpha_i$, as long as all (subsequent) roots are to the ``right'' of $\alpha_i$. Since this is the case required for
our main argument, we will not write out the argument for the remaining cases.
\end{remark}

\begin{remark}
In every step of the proof of Proposition ~\ref{controlpolsl} we were reduced to checking two key things. We only needed to guarantee that at any stage the iterated residue is holomorphic along a variable which has the color of the last two roots added. All other cases were handled by Lemma ~\ref{lemmefondamental}. This reduction will also be used in the remaining cases.
\end{remark}
\section{Proposition ~\ref{redux} for $\frg=\operatorname{G}_2$}
The positive simple roots of $\operatorname{G}_2$ are $\alpha_1$ and $\alpha_2$. The other positive roots are $\alpha_1+\alpha_2$, $2\alpha_1+\alpha_2$, $3\alpha_1+\alpha_2$ and $3\alpha_1+ 2 \alpha_2$. The normalized Cartan Killing form is given by
$$(\alpha_1, \alpha_1)=\frac{2}{3},\  (\alpha_1, \alpha_2)=-1,\ (\alpha_2, \alpha_2)=2.$$

One can form the ``patterns'' of positive roots starting from $\alpha_1$, where at each step, one adds a simple root so that the sum is again a positive root. The only possible pattern is $\alpha_1,\alpha_1+\alpha_2$, $2\alpha_1+\alpha_2$,
$3\alpha_1+\alpha_2$ and $3\alpha_1+2\alpha_2$.

Let $\beta(1)=\alpha_1$. The cases $\ell\leq 2$ are easy and immediate.

\subsection{The case $\ell=3$} Assume $\beta(1)=\alpha_1$, $\beta(2)=\alpha_2$, $\beta(3)=\alpha_1$ and $\beta(p)=\alpha_2$. Let
 $\widetilde{\Omega}=(t_1-t_2)(t_1-t_p)(t_3-t_p)(t_3-t_2)\Omega$. Clearly
 $$\Omega'= \frac{\widetilde{\Omega}(t_1,t_1,t_1,t_p)}{(t_1-t_p)^2}.$$

Now $\widetilde{\Omega}$ is symmetric in $t_2$ and $t_p$ and vanishes at $t_2=t_p=t_1$ or at $t_2=t_p=t_3$. By the same argument as in Section ~\ref{argu1}, we can see that
$$\widetilde{\Omega}(t_1,t_2,t_3,t_p)= (t_2-t_p)^2 A + (\frac{t_2+t_p}{2}-t_1)(\frac{t_2+t_p}{2}-t_2) B,$$
and this shows that one can pull a $(t_1-t_p)^2$ out of $\widetilde{\Omega}(t_1,t_1,t_1,t_p)$, as desired.
\subsection{ The case $\ell =4$}
In this case by our previous arguments, $\beta(1)=\beta(3)=\beta(4)=\alpha_1$ and $\beta(2)=\alpha_2$.
Consider the form $\Omega'=\Res_{t_4=t_1}\Res_{t_3=t_1}\Res_{t_2=t_1}\Omega$. We will show that $\Omega'$ is holomorphic along $t_1=t_p$ if $\beta(p)=\alpha_1$.

We multiply $\Omega$ by a correction factor $\mathcal{P}=(t_1-t_2)(t_3-t_2)(t_4-t_2)(t_p-t_2)$ for the stratum $t_1=t_2=t_3=t_4=t_p$ and get a form $\widetilde{\Omega}$. By Lemma ~\ref{observation},  $\widetilde{\Omega}$ vanishes on $t_1=t_2=t_3=t_4=t_p$. Setting $w=\frac{t_1+t_3+t_4+t_p}{4}$, we see that $\widetilde{\Omega}(t_1, \cdots,t_4, t_p)-\widetilde{\Omega}(w, t_2, w, w, w)$ is symmetric in $t_1$, $t_3$, $t_4$, $t_p$ and vanishes on $t_1=t_3=t_4=t_p$.

By Lemma ~\ref{bansun}, we can rewrite $\widetilde{\Omega}(t_1, \dots, t_p)$ as a sum of terms of the form $(t_i-t_j)^2A_{ij}$ and $ \widetilde{\Omega}(w, t_2, w, w, w)$, where $i,j \in \{1,3,4, p\}$. Since $(w-t_2)$ divides $\widetilde{\Omega}(w, t_2, w, w,w)$, we get that $(t_1-t_p)$ divides $\widetilde{\Omega}(t_1, t_1, t_1, t_1, t_p)$. Hence $\Omega'$ has no poles at $t_1=t_p$ when  $\beta(p)=\alpha_1$.

\subsection{The case when $\ell=5$} Let $\beta(5)=\alpha_2$.
\subsubsection{The case when $\beta(p)=\alpha_1$.}The correction factor is $\mathcal{P}= (t_p-t_2)(t_p-t_5)(t_4-t_5)(t_3-t_5)(t_1-t_5)(t_4-t_2)(t_3-t_2)(t_1-t_2)\Omega$ for the stratum $t_1=t_2=t_3=t_4=t_5=t_p$. The form $\widetilde{\Omega}$ is symmetric in $t_1,t_3, t_4, t_p$ and in $t_2, t_5$. By Lemma ~\ref{observation},  $\widetilde{\Omega}$ vanishes along the partial diagonals of the form $t_1=t_3=t_4=t_p=t_2$(four of color $\alpha_1$ and one of color $\alpha_2$) and of the form $t_2=t_5=t_1$ (two of color $\alpha_2$ and 1 of color $\alpha_1)$. By Lemma ~\ref{mindeg2}
and Lemma  ~\ref{polyvanish}, $\widetilde{\Omega}$ has degree at least $4$  on the stratum $S:t_1=\dots=t_5=t_p$. Therefore the logarithmic degree of $\Omega$ on the stratum $S$ is at least $4-8+5>0$. Therefore, by Proposition ~\ref{v8}, $\Omega'$ is holomorphic along $t_1=t_p$.

\subsubsection{The case when $\beta(p)=\alpha_2$} We multiply $\Omega$ by a correction factor $\mathcal{P}$ for the stratum $S:t_1=\dots=t_5=t_p$ of degree $9$ to get a form $\widetilde{\Omega}$. The form $\widetilde{\Omega}$ vanishes on partial diagonals of the form $t_2=t_5=t_1$ (two of color $\alpha_2$ and one of color $\alpha_1$). By Lemma ~\ref{mindeg1} and Lemma ~\ref{polyvanish} the degree of $\widetilde{\Omega}$ on $S$ is at least $5$, and hence $d^S(\Omega)\geq 5 -9 +5 =1$. Using Proposition ~\ref{v8} we conclude that $\Omega'$ is holomorphic along $t_1=t_p$.

\section{The case $\frg=\operatorname{B}_n$} The positive simple roots are $\alpha_1, \dots, \alpha_n$. The positive roots of $\operatorname{B}_n$ are of the form $\alpha_i+ \alpha_{i+1}+\cdots+ \alpha_{n}$ for $1\leq i \leq n$; $(\alpha_i+\alpha_{i+1}+\cdots+ \alpha_n)+(\alpha_j+\alpha_{j+1}+\cdots +\alpha_n)$ for $1\leq i <j\leq n$; $\alpha_i+\alpha_{i+1}+\cdots+ \alpha_{j-1}$  for $1\leq i <j \leq n$. The highest root is $\theta=\alpha_1+2\alpha_2 + \cdots + 2\alpha_n$. The only possible ``pattern'' of positive roots starting at $\alpha_1$ is $\alpha_1, (\alpha_1+\alpha_2), \dots, (\alpha_1 + \dots +\alpha_n), (\alpha_1+ \dots +\alpha_{(n-1)}+ 2\alpha_n), \dots, (\alpha_1+ \alpha_2 + 2\alpha_3 + \dots + 2 \alpha_n), (\alpha_1+ 2\alpha_2+ \dots + 2\alpha_n)$.

The normalized Cartan killing form is given by
$$(\alpha_i, \alpha_i)=2,\ 1\leq i < n;\ (\alpha_n, \alpha_n)=1;\  (\alpha_i, \alpha_{i+1})=-1,\ 1\leq i < n,\ (\alpha_i, \alpha_j)=0,\ j> i+1.$$

Let $\beta(1)=\alpha_1$. We will now show Proposition ~\ref{redux} in this case.
We divide the proof into several cases. When $\ell=1$, there are no residues and by Lemma ~\ref{observation},  $\Omega$ has at most simple poles $t_{1}=t_{p}$  if $\beta(p)=\alpha_2$.

\subsection{The case $\ell \leq n$}
The proof in this case is similar to the proof of Proposition ~\ref{controlpolsl}.

\subsection{The case $n < \ell \leq 2n-2$.} Let $\beta(n+m)=\alpha_{(n-m+1)}$ for $m>0$. We prove the proposition in this case by induction on $\ell$.

\subsubsection{The initial step} When $\ell=n+1$ by Lemma ~\ref{lemmefondamental} and Proposition ~\ref{v8} we only need to consider the case when $\beta(p)=\alpha_{n}$. We multiply $\Omega$ by a correction factor $\mathcal{P}$ of degree $n+1$ for the stratum $t_1=\dots=t_{(n+1)}=t_p$ to get a form $\widetilde{\Omega}$. By Lemma ~\ref{observation} the form $\widetilde{\Omega}$ vanishes along $t_{(n-1)}=t_{n}=t_{(n+1)}=t_p$. Hence the logarithmic degree of $\Omega$ along $t_1=\dots=t_{(n+1)}=t_p$ is positive. The proof in this case is now complete by Proposition ~\ref{v8}.

\subsubsection{The inductive step} Let $\ell=n+m$ and $T=\{1,2, \dots, n+m\}$. Assume by induction  and Proposition ~\ref{v8} that for $m>1$, the meromorphic form $\Omega''=\Res_{\vec{T}}\Omega$ has at most simple poles along $t_1=t_p$ if $\beta(p)=\alpha_{(n-m)}$. We will show that the form $\Omega'=\Res_{t_{(n+m+1)}=t_1}\Omega''$ is holomorphic along $t_1=t_p$ if $\beta(p)\neq \alpha_{(n-m-1)}$. The proof is broken up into the following steps:

 Since $\Omega''$ has poles along $t_1=t_p$ if $\beta(p)=\alpha_{(n-m)}$. It is clear from Lemma ~\ref{lemmefondamental} that $\Omega'$ is holomorphic at a generic point of $t_1=t_p$ if $\beta(p)\not\in \{\alpha_{(n-m-1)}, \alpha_{(n-m)}, \alpha_{(n-m+1)}\}$. By Proposition ~\ref{v8} we know that $\Omega'$ has at most simple poles along $t_1=t_p$ if $\beta(p)=\alpha_{(n-m-1)}$.


Now consider the case when $\beta(p)=\alpha_{(n-m)}$. As before we multiply $\Omega$ by a correction factor $\mathcal{P}$ of degree $n+3(m+1)$ for the stratum $S$ defined by $t_1=t_2=\cdots =t_{(n+m+1)}=t_p$ to get a new form $\widetilde{\Omega}$. Lemma ~\ref{observation} tells us that the form $\widetilde{\Omega}$ satisfies the same property as that of the function $f$ in Lemma ~\ref{mindegk+3}. By Lemma ~\ref{mindegk+3} we see that the degree of $\widetilde{\Omega}$ for the stratum $S$ is at least $2m+3$ and hence $d^S(\Omega)>0$. Using Proposition ~\ref{v8}, we conclude that the form $\Omega'$ is holomorphic along $t_1=t_p$.

The case when $\beta(p)=\alpha_{(n-m+1)}$ is similar and follows from Lemma ~\ref{mindegk+4} and Proposition ~\ref{v8}.
\subsection{The case $\ell=2n-1$} Let $\beta(2n-1)=\alpha_{2}$. By Lemma ~\ref{lemmefondamental} we only need to check the cases when $\beta(p)\in\{ \alpha_1, \alpha_2, \alpha_3\}$. The proof  $\Omega'=\Res_{t_{(2n-1)}=t_1}\dots \Res_{t_2=t_1}\Omega$ is holomorphic along $t_1=t_p$ in these cases follow similarly using Lemma ~\ref{mindegk+1}, Lemma ~\ref{mindegk+3}, Lemma ~\ref{mindegk+4} and Proposition ~\ref{v8}.

The proof of Theorem \ref{controlpoles} for $\frg=\operatorname{B}_n$ is now complete.

\section{ Proposition ~\ref{redux} for $\frg=\operatorname{D_n}$} The positive simple roots of $\operatorname{D}_n$ are $\alpha_1, \alpha_2, \dots, \alpha_n$. The positive roots of $\operatorname{D}_n$ are of the form $\alpha_i +\alpha_{i+1}+\dots+ \alpha_{j-1}$ for $i<j<n$; $\alpha_i +\alpha_{i+1}+\dots+ \alpha_{j-1} + 2\alpha_{j} + 2 \alpha_{j+1}+ \dots + 2\alpha_{n-2} + \alpha_{n-1}+\alpha_n$ for $i<j < n-1$; $\alpha_i +\alpha_{i+1} + \cdots +\alpha_n $ for $i < n-1$; $\alpha_i + \alpha_{i+1}+ \dots+\alpha_{n-2}+ \alpha_n$ for $i \leq n-1$ and $\alpha_n$. If we formally put $\alpha_n=\alpha_{(n-1)}$ in the above expression of the positive roots we recover the positive roots of $B_{(n-1)}$.

There are two possible ``patterns" of positive roots starting at $\alpha_1$. The first pattern is $\alpha_1, (\alpha_1+\alpha_2), \dots, (\alpha_1+ \alpha_2 + \dots + \alpha_{(n-2)}), (\alpha_1 + \dots + \alpha_{(n-2)}+\alpha_{(n-1)}), (\alpha_1 + \dots + \alpha_{(n-2)}+\alpha_{(n-1)}+ \alpha_{n}), (\alpha_1 + \dots + \alpha_{(n-3)}+ 2\alpha_{(n-2)}+\alpha_{(n-1)} + \alpha_n), \dots, (\alpha_1 + 2\alpha_2 + 2 \alpha_3\dots + 2\alpha_{(n-3)}+ 2\alpha_{(n-2)}+\alpha_{(n-1)} + \alpha_n)$.

The second pattern is same as the first except the positive root $(\alpha_1 + \dots + \alpha_{(n-2)}+\alpha_{(n-1)})$ is replaced by the positive roots $(\alpha_1 + \dots + \alpha_{(n-2)}+\alpha_{n})$.

Since $\operatorname{D}_n$ is simply laced, the normalized Cartan killing form is given by
$$(\alpha_i, \alpha_i)=2,\ 1\leq i \leq n;\ \ (\alpha_{n-2},\alpha_n)=-1;\ (\alpha_i, \alpha_{i+1})=-1,\ i\leq n-2,(\alpha_{n-1},\alpha_n)=0,$$
$$(\alpha_i, \alpha_j)=0,\ i+1< j \text{ except when } i=n-2,\ j=n.$$

The proof of Theorem ~\ref{controlpoles} for $\frg=\operatorname{D}_n$ is same as the case $\operatorname{B}_n$. We only include the proof of Proposition ~\ref{redux} in the case $\frg=\operatorname{D}_4$.

Let $\beta(1)=\alpha_1$. When $\ell=1$, there is no residue and Lemma ~\ref{observation} tells us that $\Omega$ has at most simple poles at $t_1=t_p$ if $\beta(p)=\alpha_2$.
\subsection{The case $\ell=2$} We consider $\Omega'=\Res_{t_2=t_1}\Omega$. By Proposition ~\ref{v8} the form $\Omega'$ has at most simple poles along $t_1=t_p$ if $\beta(p)\in\{ \alpha_3, \alpha_4\}$. We will show that $\Omega'$ is holomorphic along $t_1=t_p$ if $\beta(p)\in\{\alpha_1, \alpha_2\}$. The proof in both these cases is similar to proof of Proposition ~\ref{controlpolsl}.

\subsection{The case $\ell=3$} We can assume that $\beta(3)=\alpha_3$. We consider the form $\Omega'=\Res_{t_3=t_1}\Res_{t_2=t_1}\Omega$. By Lemma ~\ref{lemmefondamental} and Proposition ~\ref{v8} we only need to show that $\Omega'$ is holomorphic along $t_1=t_p$ if $\beta(p)\in\{\alpha_2, \alpha_3\}$. The proof in this case also similar to the proof of Proposition ~\ref{controlpolsl}.

\subsection{The case $\ell=4$} Let $\beta(4)=\alpha_4$. We consider the form $\Omega'=\Res_{t_4=t_1}\dots\Res_{t_2=t_1}\Omega$. By Lemma ~\ref{lemmefondamental} and Proposition ~\ref{v8}, we only need to show that $\Omega'$ is holomorphic along $t_1=t_p$ if $\beta(p)=\alpha_4$.

\subsubsection{ The case $\beta(p)=\alpha_4$} We multiply the form $\Omega$ by a correction factor $\mathcal{P}=(t_1-t_2)(t_2-t_3)(t_2-t_4)(t_2-t_p)$ for the stratum $S$ defined by $t_1=t_2=t_3=t_4=t_p$ to get a form $\widetilde{\Omega}$. By Lemma ~\ref{observation}, the form $\widetilde{\Omega}$ vanishes on $t_2=t_4=t_p$. Thus $d^S(\Omega) \geq 1-4+4$. Hence the proof follows in this case by Proposition ~\ref{v8}.

\subsection{ The case $\ell=5$} Let $\beta(5)=\alpha_2$. We consider the form $\Omega'=\Res_{t_5=t_1}\dots\Res_{t_2=t_1}\Omega.$ We will show that $\Omega'$ is holomorphic along $t_1=t_p$.

\subsubsection{The case $\beta(p)=\alpha_2$} We multiply the form $\Omega$ by a multiplication factor of degree $9$ for the stratum $S$ defined by $t_1=t_2\dots=t_5=t_p$ to get a form $\widetilde{\Omega}$. By Lemma ~\ref{observation} the form $\widetilde{\Omega}$ satisfies the properties of the function $f$ in Lemma ~\ref{mindegk+3}. Hence by Lemma ~\ref{polyvanish} and Lemma ~\ref{mindegk+3} the degree of $\Omega'$ for the stratum $S$ is at least $5$. Now the proof follows from Proposition ~\ref{v8}.

\subsubsection{The case $\beta(p)=\alpha_1$} We multiply the form $\Omega$ by a multiplication factor of degree $8$ for the stratum $S$ defined by $t_1=t_2=\dots=t_5=t_p$. By Lemma ~\ref{observation} the form $\widetilde{\Omega}$ satisfies the properties of the function $f$ in Lemma ~\ref{mindeg3}. Hence the proof in this case follows as before.

\subsubsection{ The case $\beta(p)=\alpha_3$ or $\beta(p)=\alpha_4$} The proof that $\Omega'$ is holomorphic along $t_1=t_p$ is similar.

This completes the proof of Theorem ~\ref{controlpoles} for $\frg=\operatorname{D}_4$.

\begin{remark}
In the above proof for the case $\ell=3$, by assuming $\beta(3)=\alpha_3$ we followed the first pattern of the positive roots starting at $\alpha_1$. If we had assumed $\beta(3)=\alpha_4$, and followed the second pattern, the proof would have been similar.
\end{remark}
\section{Proposition ~\ref{redux} for $\frg=\operatorname{C}_n$}

The positive simple roots of $\operatorname{C}_n$ are $\alpha_1, \dots, \alpha_n$. The positive roots of $\operatorname{C}_n$ are of the form $\alpha_i+\alpha_{i+1}+\dots+ \alpha_j$ for $i<j \leq n$; $\alpha_i + \alpha_{i+1}+\dots+ 2\alpha_j + 2\alpha_{j+1}+\dots + 2\alpha_{n-1}+\alpha_n$ for $i\leq j<n$. The highest root $\theta$ is given by $2\alpha_1+2\alpha_2 + \dots+ 2\alpha_{n-1} + \alpha_n$. The only possible ``pattern" of positive roots starting at $\alpha_1$ is $\alpha_1, (\alpha_1+\alpha_2), \dots (\alpha_1+ \alpha_2+ \dots + \alpha_{n}), (\alpha_1+ \alpha_2 + \dots + \alpha_{(n-2)}+ 2\alpha_{(n-1)}+\alpha_n)+ (\alpha_1+ \alpha_2 \dots + \alpha_{(n-3)}+ 2\alpha_{(n-2)}+ 2\alpha_{(n-1)}+\alpha_n) + \dots (\alpha_1+ 2\alpha_2 \dots + 2\alpha_{(n-2)}+ 2\alpha_{(n-1)}+\alpha_n)+ (2\alpha_1+ 2\alpha_2+ \dots + 2\alpha_{(n-2)}+ 2\alpha_{(n-1)}+\alpha_n).$

The normalized Cartan killing form is given by the following $$(\alpha_i,\alpha_i)=1 \ \text{for $1\leq i <n$}; \ \ (\alpha_n,\alpha_n)=2;$$
$$(\alpha_i, \alpha_{i+1})=-\frac{1}{2} \ \text{for $1\leq i <n$};  \ \ (\alpha_{n-1}, \alpha_n)=-1 ;\ \
(\alpha_i, \alpha_j)=0 \  \text{for $i+1<j$}.$$

Let $\beta(1)=\alpha_1$. We give a proof Proposition ~\ref{redux} in this case by dividing the proof into several cases. When $\ell=1$, there are no residues and by Lemma ~\ref{observation}, $\Omega$ has at most simple poles $t_{1}=t_{p}$  if $\beta(p)=\alpha_2$.

\subsection{The case $\ell \leq  n$}
The proof follows easily from the same methods used in Proposition ~\ref{controlpolsl}.

\subsection{The case $\ell=n+1$} Let $\beta(n+1)=\alpha_{(n-1)}$ and $\Omega''=\Res_{t_n=t_1}\dots\Res_{t_2=t_1}\Omega$. The previous cases with $\ell\leq n$ and Proposition ~\ref{v8} tell us that the form $\Omega''$ has at most simple poles along $t_1=t_p$ if $\beta(p)=\alpha_{n-1}$. Thus by Lemma ~\ref{lemmefondamental}, we conclude that $\Omega'=\Res_{t_{n+1}=t_1}\Omega''$ is holomorphic along $t_1=t_p$ if $\beta(p)\not\in \{ \alpha_{(n-2)}, \alpha_{(n-1)}, \alpha_{(n)}\}$. If $\beta(p)=\alpha_{(n-2)}$, by Proposition ~\ref{v8},  $\Omega'$ has at most a simple pole along $t_1=t_p$.

\subsubsection{ The case when $\beta(p)=\alpha_n$} We multiply $\Omega$ with a correction factor of degree $n+3$ for the stratum $S$ defined by  $t_1=t_2=\dots=t_{(n+1)}=t_p$  to get a form $\widetilde{\Omega}$. By Proposition ~\ref{v8} we only need to show that $d^S(\Omega)>0$. Thus it is enough to show that the degree of $\widetilde{\Omega}$ for the stratum $S$ is at least $3$. By Lemma ~\ref{observation}, $\widetilde{\Omega}$  has the same properties as $f$ in Lemma ~\ref{deg3'}. Now the proof follows from Lemma ~\ref{deg3'} and Lemma ~\ref{polyvanish}.

\subsubsection{The case when $\beta(p)=\alpha_{(n-1)}$}When $\beta(p)=\alpha_{(n-1)}$ it follows similarly as above from Lemma ~\ref{deg3} that $\Omega'$ is holomorphic along $t_1=t_p$.


\subsection{ The case $n+1 \leq \ell < 2n-1$}Let $\beta(n-m)=\beta(n+m)=\alpha_{(n-m)}$ for $m <n$. We prove Proposition ~\ref{redux} by induction on $\ell$. The initial step $\ell=n+1$ is proved above. Now we prove the inductive step. We assume that for $1 \leq m-1$, the form $\Omega''=\Res_{t_{(n+m-1)}=t_1}\dots \Res_{t_2=t_1}\Omega$ is holomorphic along $t_1=t_p$ if $\beta(p)\neq \alpha_{n-m}$. By Proposition ~\ref{v8}, $\Omega''$ has at most simple poles along $t_1=t_p$ if $\beta(p)=\alpha_{(n-m)}$.

We consider the form $\Omega'=\Res_{t_{(n+m)}=t_1}\Omega''$. Lemma ~\ref{lemmefondamental} tells us that the form $\Omega'$ is holomorphic along $t_1=t_p$ if $\beta(p)\not\in \{ \alpha_{(n-m-1)}, \alpha_{(n-m)}, \alpha_{(n-m+1)}\}$. By Proposition ~\ref{v8}, $\Omega'$ has at most simple poles along $t_1=t_p$ if $\beta(p)=\alpha_{(n-m-1)}$. Thus we are reduced to check the following two cases:

\subsubsection{ The case when $\beta(p)=\alpha_{(n-m)}$} We multiply $\Omega$ by a correction factor $\mathcal{P}$  of degree $n+3m+1$ for the stratum $S$ defined by $t_1=t_2=\dots=t_{(n+m)}=t_p$ to get a form $\widetilde{\Omega}$. Lemma ~\ref{observation} tells us that the form $\widetilde{\Omega}$ has the same properties as the function $f$ in Lemma ~\ref{mig3}. Thus by Lemma ~\ref{polyvanish} and Proposition ~\ref{v8} we are done.

\subsubsection{ The case when $\beta(p)=\alpha_{(n-m+1)}$} We multiply $\Omega$ by a correction factor $\mathcal{P}$  of degree $n+3m+2$ for the stratum $S$ defined by $t_1=t_2=\dots=t_{(n+m)}=t_p$ to get a form $\widetilde{\Omega}$. By Lemma ~\ref{observation} the form $\widetilde{\Omega}$ has the same properties as the function $f$ in Lemma ~\ref{mig4}. Thus by Lemma ~\ref{polyvanish} and Proposition ~\ref{v8} we are done.

\subsection{The case $\ell=2n-1$} By Lemma ~\ref{lemmefondamental}, Proposition \ref{v8} and the previous step we only need to show that $\Omega'=\Res_{t_{(2n-1)}=t_1}\dots\Res_{t_2=t_1}\Omega$ is holomorphic along $t_1=t_p$ if $\beta(p)\in\{\alpha_1, \alpha_2\}$. The proof in this case is similar to the proof in the previous step.

The proof of Theorem \ref{controlpoles} for $\frg=\operatorname{C}_n$ is now complete.

\section{Key Lemmas}
Throughout this section $f$ will denote a polynomial in multiple variables which is symmetric in some variables and vanishes along certain partial diagonals. We use these properties of $f$ to give a lower bound on the total degree of $f$.
\subsection{For $\operatorname{G}_2$}

\begin{lemma}\label{bansun}
Suppose $g(u_1,\dots,u_n)$ is a symmetric polynomial in $u_1,\dots,u_n$ which vanishes on
$u_1=\dots=u_n$. Then $g$ is a linear combination of functions of the form $$(u_i-u_j)^2 {A}_{ij}(u_1,\dots,u_n),$$ where ${A}_{ij}$ are (possibly non-symmetric) polynomials.
\end{lemma}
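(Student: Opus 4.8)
The plan is to reduce the statement to a short computation with the first-order part of $g$. First I would introduce the linear coordinates $v_1=u_1$ and $v_i=u_i-u_1$ for $2\le i\le n$, so that $\mathbb{C}[u_1,\dots,u_n]=\mathbb{C}[v_1,\dots,v_n]$ and the small diagonal $\Delta=\{u_1=\dots=u_n\}$ becomes the linear subspace $\{v_2=\dots=v_n=0\}$, whose (radical) ideal is $J=(v_2,\dots,v_n)$. I would record at the outset that the ideal $I$ generated by all the $(u_i-u_j)^2$ equals $J^2$: the inclusion $I\subseteq J^2$ is clear since each $u_i-u_j\in J$, and conversely $v_i^2=(u_1-u_i)^2\in I$ while, for $i\ne j$, $v_iv_j=\tfrac12\bigl(v_i^2+v_j^2-(v_i-v_j)^2\bigr)=\tfrac12\bigl((u_1-u_i)^2+(u_1-u_j)^2-(u_i-u_j)^2\bigr)\in I$, so $J^2\subseteq I$. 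Hence it suffices to prove $g\in J^2$, and then to translate membership in $J^2$ back into a combination of the $(u_i-u_j)^2$ using exactly these identities.

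Since $g$ vanishes on $\Delta$ we have $g\in J$, so expanding $g$ as a polynomial in $v_2,\dots,v_n$ with coefficients in $\mathbb{C}[v_1]=\mathbb{C}[u_1]$ gives $g=\sum_{i=2}^n c_i(u_1)\,v_i+(\text{an element of }J^2)$, where $c_i(u_1)=\bigl(\partial g/\partial v_i\bigr)|_{v_2=\dots=v_n=0}=\bigl(\partial g/\partial u_i\bigr)|_{\Delta}$, the last equality because $\partial/\partial v_i=\partial/\partial u_i$ for $i\ge2$. So the whole point becomes to prove $c_2=\dots=c_n=0$, and this is where symmetry enters: differentiating the identity $g(u_{\sigma(1)},\dots,u_{\sigma(n)})=g(u_1,\dots,u_n)$ and restricting to $\Delta$ — on which $S_n$ acts trivially — shows that $\bigl(\partial g/\partial u_1\bigr)|_{\Delta}=\dots=\bigl(\partial g/\partial u_n\bigr)|_{\Delta}$; but the sum of these equals $\tfrac{d}{dt}\,g(t,\dots,t)=\tfrac{d}{dt}(0)=0$, so each one vanishes. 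Therefore $c_i\equiv0$, hence $g\in J^2=I$, which gives the claimed expression $g=\sum(u_i-u_j)^2A_{ij}(u_1,\dots,u_n)$.

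I do not expect a genuine obstacle; the only two places needing a line of care are the identification $I=J^2$ (the elementary identity above) and the passage from ``the first-order symbol of $g$ vanishes'' to ``$g\in J^2$'', which is immediate from the monomial expansion in $v_2,\dots,v_n$ precisely because $\Delta$ is a linear subspace, so no subtlety about symbolic versus ordinary powers intervenes. If a more conceptual phrasing is wanted, the argument says: the symbol of $g$ lies in $(J/J^2)^{S_n}$, and this space is zero because the conormal module $J/J^2$ is, $S_n$-equivariantly, $\mathbb{C}[\Delta]$ tensored with the standard $(n-1)$-dimensional representation, which contains no trivial summand; the partial-derivative computation above is exactly the verification that this invariant space vanishes.
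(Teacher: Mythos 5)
Your proof is correct, and it takes a genuinely different route from the paper's. The paper argues by symmetrization: it writes $g$ as a combination of terms $(u_i-u_j)A$, replaces $g$ by its full $S_n$-symmetrization, and pairs each $\sigma$ with $\sigma\circ(ij)$ so that the term becomes $\tfrac12(u_i-u_j)\bigl(A-(ij)A\bigr)$, where the difference is again divisible by $u_i-u_j$ — a purely combinatorial two-line trick. You instead identify the target ideal structurally: after the substitution $v_i=u_i-u_1$ you show that the ideal generated by the $(u_i-u_j)^2$ is exactly $J^2$ for $J$ the ideal of the small diagonal (the identity $v_iv_j=\tfrac12\bigl(v_i^2+v_j^2-(v_i-v_j)^2\bigr)$ is the only nontrivial input), and then reduce the lemma to the vanishing of the linear term of $g$ along $\Delta$, which follows because the partials $(\partial g/\partial u_i)|_{\Delta}$ are all equal by symmetry and sum to $\tfrac{d}{dt}g(t,\dots,t)=0$. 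All steps check out: $J$ is the honest ideal of $\Delta$ (no Nullstellensatz subtlety, since the constant term of the $v$-expansion is $g(t,\dots,t)$), $\partial/\partial v_i=\partial/\partial u_i$ for $i\ge 2$, and the conormal module $J/J^2\cong\Bbb{C}[\Delta]\otimes(\text{standard rep})$ has no $S_n$-invariants. What each approach buys: the paper's argument is shorter, coordinate-free, and directly produces the explicit decomposition used downstream (e.g.\ in Lemma~\ref{mindeg1} and the $G_2$ case $\ell=4$); yours gives the sharper structural statement that the ideal $\bigl((u_i-u_j)^2\bigr)$ \emph{equals} $J^2$ and that any symmetric element of $J$ lies in $J^2$, which explains conceptually why the lemma is true and would adapt to other group actions where the conormal representation has no trivial summand.
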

\begin{proof}
We first show that $g$ is a linear combination of elements of the form $(u_i-u_j)B_{ij}$, where $B_{ij}$ are (possibly non-symmetric) polynomials. To see this, divide $g$ as a polynomial in $u_2$ with remainder, by $(u_2-u_1)$. The remainder is a (possibly non-symmetric) polynomial in $u_1,u_3,\dots,u_n$. Now continue with $(u_3-u_1)$ all the way until and including $(u_n-u_1)$. The final remainder
is a function in $u_1$ alone, which vanishes when $u_1=\dots=u_n$, and is hence zero.

Since $g$ is a symmetric polynomial, we just need to show that polynomials of the form
$$h=\sum_{\sigma\in S_n} \sigma\bigl((u_1-u_2)A(u_1,\dots,u_n)\bigr)$$
can be expressed as linear combinations of polynomials, each divisible by some $(u_i-u_j)^2$.
As $\sigma$ runs through $S_n$ so does $\sigma(12)$.
We can therefore rewrite the above sum as follows: $h$ equals
$$\frac{1}{2}(\sum_{\sigma\in S_n} \sigma\bigl((u_1-u_2)A(u_1,u_2,\dots,u_n)\bigr)+\sum_{\sigma\in S_n} \sigma \bigl((12)\bigl((u_1-u_2)A(u_1,\dots,u_n)\bigr)\bigr)=\frac{1}{2}\sum_{\sigma\in S_n} g_{\sigma},$$
where
$$g_{\sigma}= \sigma\bigl((u_1-u_2)(A(u_1,u_2,\dots,u_n)-A(u_2,u_1,\dots,u_n)\bigr),$$
i.e., the result of $\sigma$ acting on the polynomial $(u_1-u_2)(A(u_1,u_2,\dots,u_n)-A(u_2,u_1,\dots,u_n))$. Now
note that  $(u_1-u_2)$ divides $A(u_1,u_2,u_3,\dots,u_n)-A(u_2,u_1,u_3,\dots,u_n)$. Therefore
$g_{\sigma}$ is divisible by
$$\sigma\bigl((u_1-u_2)^2\bigr)=(u_{\sigma(1)}-u_{\sigma(2)})^2.$$

\end{proof}

\begin{lemma}\label{mindeg1}
Suppose that $f(u_1,u_2,u_3,t_1,t_2,t_3)$ is symmetric  in $u_1,u_2,u_3$ and vanishes on the nine diagonals of the form $u_1=u_2=t_1$ (two $u$'s and one $t$). Then, degree of $ f$ 
 is $\geq 5$.
\end{lemma}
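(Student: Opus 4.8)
The plan is to argue that the vanishing conditions, combined with symmetry, force enough independent linear factors (in a suitable sense) that the total degree cannot drop below $5$. Concretely, $f(u_1,u_2,u_3,t_1,t_2,t_3)$ is symmetric in $u_1,u_2,u_3$ and vanishes whenever two of the $u$'s and one of the $t$'s coincide, i.e.\ on each of the nine loci $u_i=u_j=t_a$ for $i<j$, $a\in\{1,2,3\}$. First I would fix a $t$, say $t_1$, and look at the polynomial $g_1(u_1,u_2,u_3):=f(u_1,u_2,u_3,t_1,t_2,t_3)$ with $t_2,t_3$ treated as parameters. It is symmetric in the $u$'s and vanishes on $u_1=u_2=t_1$ (and the $S_3$-translates), so $g_1$ is divisible, after symmetrization, by a symmetric polynomial cutting out $\bigcup_{i<j}\{u_i=u_j=t_1\}$; the natural such invariant is $\prod_{i<j}\big((u_i-t_1)+(u_j-t_1)\big)$ is not quite right — rather one uses that $g_1$ vanishes on $u_i=u_j=t_1$ for all pairs, hence (by an argument like Lemma~\ref{bansun}, applied to $g_1$ viewed as vanishing on the small diagonal after the substitution $u_i\mapsto u_i+t_1$) $g_1$ lies in the ideal generated by the degree-$3$ symmetric expression $\sum_{i<j}(u_i-t_1)(u_j-t_1)\cdot(\text{something})$. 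The cleaner route: shift coordinates $v_i=u_i-t_1$; then $g_1$ is symmetric in $v_1,v_2,v_3$ and vanishes on each $v_i=v_j=0$, i.e.\ on the union of the three coordinate lines $\{v_i=v_j=0\}$. A symmetric polynomial vanishing on all three of these lines must be divisible by $e_2(v):=v_1v_2+v_1v_3+v_2v_3$ together with one more factor vanishing on the lines — in fact vanishing on $\{v_1=v_2=0\}$ means divisibility by an element of $(v_1,v_2)\cap(\text{symmetric})$, and the symmetric ideal of the three lines is generated in degrees $\geq 3$; the lowest-degree symmetric element is $e_1 e_2 - 3e_3$ or similar, of degree $3$. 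So $\deg_v g_1\geq 3$, giving $\deg f\geq 3$ in the $u$-variables alone, which is not yet enough.

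To push from $3$ to $5$ I would exploit the three different $t$'s simultaneously. The key observation is that the lowest-degree term of $f$ along the stratum $S:u_1=u_2=u_3=t_1=t_2=t_3$ (in the sense of Section~8, Lemma~\ref{polyvanish}) inherits both the symmetry in the $u$'s and the vanishing on all nine partial diagonals. Writing $f$ in coordinates centered at a point of $S$, with $v_i=u_i-u_1$ ($i=2,3$) and $w_a=t_a-u_1$, the lowest homogeneous piece $f_{d_0}$ vanishes on each linear subspace $\{v_i=v_j=0,\ w_a=0\}$ (reading $v_1=0$), i.e.\ on nine codimension-$\leq 3$ linear spaces, and is symmetric in $v_1=0,v_2,v_3$ in the appropriate sense. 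Then I would argue that a nonzero homogeneous polynomial with these symmetry and vanishing properties has degree $\geq 5$: vanishing on $\{v_1=v_2=0\}\cap\{w_a=0\}$ for each $a$ forces, after symmetrization, a factor from $e_2(v_1,v_2,v_3)$-type invariants (degree $\geq 3$ as above) and an \emph{additional} factor detecting the $w_a$-dependence. Because the nine loci involve all three $w_a$'s and the $u$-part must be an $S_3$-invariant vanishing on the small coordinate lines, one cannot separate variables cheaply: the minimal invariant configuration realizing all nine vanishings needs degree $2$ more than the bare $u$-only bound, landing at $5$. I would make this precise by an explicit generators-of-the-ideal computation in $\mathbb{C}[v_2,v_3,w_1,w_2,w_3]^{S_3}$ (the $S_3$ acting on $v$'s, permuting the three lines $\{v_i=v_j=0\}$, with the $w$'s fixed): one checks the ideal of the nine linear spaces, intersected with the invariants, is generated in degree $\geq 5$, e.g.\ by $\prod_{a=1}^3 w_a \cdot e_2(v)$-style elements and their relatives, all of degree $\geq 5$, with nothing in lower degree.

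The main obstacle I anticipate is precisely this last ideal-theoretic/representation-theoretic computation: showing there is \emph{no} $S_3$-invariant polynomial of degree $3$ or $4$ vanishing on all nine partial diagonals. Degree $3$ is easy (the only invariants are spanned by $e_1^3, e_1 e_2, e_3$ in the $u$'s times constants in the $w$'s, or degree-$\leq 2$ $u$-invariants times linear $w$'s, and one checks none vanishes on the nine loci). Degree $4$ is the genuinely delicate case: one must rule out invariants like (quadratic $u$-invariant)$\times$(quadratic $w$-expression) and (cubic $u$-invariant)$\times$(linear $w$) and (quartic pure $u$-invariant), testing each against the nine vanishing conditions. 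I would organize this by decomposing the degree-$4$ part of $\mathbb{C}[u_1,u_2,u_3]^{S_3}\otimes\mathbb{C}[t_1,t_2,t_3]$ by $u$-degree and, for each piece, writing down the restriction to the loci $u_i=u_j=t_a$ as a linear system, and verifying the only solution is $0$. This is a finite, if somewhat tedious, linear-algebra check; it is the same flavour of argument used for the other ``key lemmas'' in this section (Lemmas~\ref{mindeg2}, \ref{mindegk+3}, etc.), so the same bookkeeping — symmetrize, substitute, count — applies. Once the homogeneous statement is in hand, Lemma~\ref{polyvanish} transfers it back to $f$ itself, giving $\deg f\geq 5$.
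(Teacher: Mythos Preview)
Your reduction to the homogeneous case via Lemma~\ref{polyvanish} is legitimate (the lowest-degree term of $f$ along the full diagonal $S$ is a homogeneous polynomial of some degree $d_0\leq\deg f$ that inherits both the $S_3$-symmetry in the $u$'s and the nine vanishing conditions, so $d_0\geq 5$ would suffice). Two problems remain, however.

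First, the warm-up claim in your opening paragraph is wrong: a symmetric polynomial in $v_1,v_2,v_3$ vanishing on all three coordinate lines $\{v_i=v_j=0\}$ can have degree $2$, since $e_2(v)=v_1v_2+v_1v_3+v_2v_3$ already vanishes there. So the heuristic ``$u$-only bound $3$, plus $2$ from the $t$'s, lands at $5$'' collapses, and you have no a priori reason to expect the answer is $5$ rather than $4$.

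Second, and this is the real gap, you have not carried out the degree-$4$ elimination, which is the entire content of the lemma. You correctly identify it as the obstacle, but ``a finite, if somewhat tedious, linear-algebra check'' is a description of work to be done, not a proof. The space of $S_3$-invariant homogeneous degree-$4$ polynomials in six variables is not small, and the nine vanishing conditions are far from transparently independent.

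The paper's argument sidesteps any enumeration. With $w=\tfrac{1}{3}(u_1+u_2+u_3)$ and $h(w,t)=f(w,w,w,t)$, one subtracts $h$ and invokes Lemma~\ref{bansun} to obtain
\[
f=(u_1-u_2)^2A+(u_2-u_3)^2B+(u_1-u_3)^2C+(w-t_1)(w-t_2)(w-t_3)\,g(w,t).
\]
Specializing this identity to $u_1=u_2=t_1=c$ and using the hypothesis forces $g(c,c,t_2,t_3)=0$, so $g$ is itself divisible by $\prod_a(w-t_a)$; hence if $h\neq 0$ then $\deg f\geq\deg h\geq 6$. If $h=0$, set $u_1=u_2=c$ instead: then $f(c,c,u_3,t)=(c-u_3)^2 D$ with $D$ divisible by $\prod_a(c-t_a)$, giving $\deg f\geq 5$ (or $\geq 6$ in the residual case $f(c,c,u_3,t)\equiv 0$, where $f$ is divisible by $\prod_{i<j}(u_i-u_j)^2$). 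This subtract-and-specialize maneuver is the engine behind all the key lemmas in this section; it is what you should try to reproduce rather than a dimension count.
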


\begin{proof}
Write $f$  as a sum
$$f= \bigr(f-f(\frac{\sum u_i}{3},\frac{\sum u_i}{3},\frac{\sum u_i}{3},t_1,t_2,t_3)\bigr) + h(\frac{\sum u_i}{3},t_1,t_2,t_3),$$
where $$h(w,t_1,t_2,t_3)=f(w,w,w,t_1,t_2,t_3)$$
vanishes when $w=t_1$ or $w=t_2$ or $w=t_3$, so we may write
$$h(w,t_1,t_2,t_3)= (w-t_1)(w-t_2)(w-t_3)g(w,t_1,t_2,t_3).$$

Clearly, $f-f(\frac{\sum u_i}{3},\frac{\sum u_i}{3},\frac{\sum u_i}{3},t_1,t_2,t_3)$ vanishes
on $u_1=u_2=u_3$, and is hence of the form
$$(u_1-u_2)^2 A + (u_2-u_3)^2 B+(u_1-u_3)^2 C.$$

Therefore,
\begin{equation}\label{chik}
f= (u_1-u_2)^2 A + (u_2-u_3)^2 B+(u_1-u_3)^2 C +(w-t_1)(w-t_2)(w-t_3)g(w,t_1,t_2,t_3),
\end{equation}
where $w=\frac{\sum u_i}{3}$. Put $u_1=u_2=t_1=c$, then the above equation reads
$$0=(u_3-c)^2 D+\frac{1}{3}(u_3-c)(\frac{2c+u_3}{3}-t_2)(\frac{2c+u_3}{3}-t_3)g(\frac{2c+u_3}{3},c,t_2,t_3).$$
Divide by $u_3-c$ and set $u_3=c$ to get $g(c,c,t_2,t_3)=0$, so $g(w,t_1,t_2,t_3)$ is divisible by
$(w-t_1)(w-t_2)(w-t_3)$.
The degree of $f$ is at least the degree of $h$, so if $h\neq 0$ we are done.

In the case $h=0$, consider ~\eqref{chik} with $u_1=u_2=c$. Note that
if $f$ vanishes on $u_1=u_2$ then it vanishes on all $3$ of the $u$ diagonals to order two and is hence of degree at least $6$. So we get the following:
$$f(c,c,u_3,t_1,t_2,t_3)= (c-u_3)^2 D(c,u_3, t_1,t_2,t_3), $$ where $D\neq 0$.

Since by hypothesis, $f$ vanishes of diagonal of the form $u_1=u_2=t_1$ (two $u$'s and one $t$) we get that the right hand side vanishes for $t_1=c$, so $D$ is divisible by $(c-t_1)(c-t_2)(c-t_3)$, and hence the
degree of $f$ is at least five.
\end{proof}

\begin{lemma}\label{mindeg2}
Suppose that $f(u_1,u_2,t_1,t_2,t_3,t_4)$ is symmetric separately in $u_1$, $u_2$ and $t_1$, $t_2$, $t_3$, $t_4$ and vanishes on diagonals of the form $u_1=u_2=t_1$ (two $u$'s and one $t$) and $t_1=t_2=t_3=t_4=u_1$ (four $t$'s and one $u$, two of these). Then, degree of $ f$
$\geq 4$.
\end{lemma}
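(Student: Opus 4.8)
The plan is to follow the template of Lemmas~\ref{bansun} and~\ref{mindeg1}. Write $w=\frac{u_1+u_2}{2}$ and decompose
$$f=\bigl(f-f(w,w,t_1,t_2,t_3,t_4)\bigr)+h(w,t_1,t_2,t_3,t_4),\qquad h(w,\vec t):=f(w,w,\vec t).$$
Since $f$ is symmetric in $u_1,u_2$, the first summand is symmetric in $u_1,u_2$ and vanishes on $u_1=u_2$; as in Lemma~\ref{bansun} this forces divisibility by $(u_1-u_2)^2$, say $f-f(w,w,\vec t)=(u_1-u_2)^2A$ with $A$ symmetric in $u_1,u_2$ and in $t_1,\dots,t_4$. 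The second summand $h$ is symmetric in $t_1,\dots,t_4$, and by the hypothesis that $f$ vanishes on each diagonal $u_1=u_2=t_j$ it vanishes whenever $w=t_j$; hence $h$ is divisible by $\prod_{j=1}^{4}(w-t_j)$.

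I would then argue by cases. If $h\neq 0$, then $\deg h\geq 4$, and since $h$ is obtained from $f$ by the substitution $u_1=u_2=w$ we get $\deg f\geq\deg h\geq 4$. If $h=0$, then $f=(u_1-u_2)^2A$; specializing the hypothesis that $f$ vanishes on $t_1=t_2=t_3=t_4=u_1$ and cancelling the nonzero factor $(u_1-u_2)^2$ shows that $A$ vanishes on the partial diagonal $t_1=t_2=t_3=t_4=u_1$. It remains to note that any nonzero polynomial which is symmetric in $u_1,u_2$, symmetric in $t_1,\dots,t_4$, and vanishes on this diagonal has degree $\geq 2$: its degree $\leq 1$ part lies in the span of $1$, $u_1+u_2$, $t_1+t_2+t_3+t_4$, and the substitution $t_1=\dots=t_4=u_1$ forces all three coefficients to vanish. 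Hence $\deg A\geq 2$ and $\deg f=2+\deg A\geq 4$.

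There is no serious obstacle here; the one point to be careful about is the strengthening from divisibility by $u_1-u_2$ to divisibility by $(u_1-u_2)^2$, which uses the $u_1\leftrightarrow u_2$ symmetry exactly as in Lemma~\ref{bansun} and is precisely what yields the bound $4$ rather than $3$. (If $f=0$ the conclusion is vacuous, so we assume $f\neq 0$, which forces $A\neq 0$ in the second case; the form $(u_1-u_2)^2(\bar t-u_1)(\bar t-u_2)$ with $\bar t=\frac14\sum_{j=1}^{4}t_j$ shows the bound is sharp.)
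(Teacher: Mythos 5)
Your proof is correct and follows essentially the same route as the paper: the same decomposition $f=(u_1-u_2)^2A+h(w,\vec{t})$ with $w=\frac{u_1+u_2}{2}$, the divisibility of $h$ by $\prod_{j}(w-t_j)$, and the case split on whether $h=0$. The only (harmless) difference is at the very end: where the paper splits further according to whether $A$ vanishes on $t_1=\cdots=t_4$ (invoking Lemma~\ref{bansun} in one subcase), you extract $\deg A\geq 2$ directly from the vanishing of $A$ on $t_1=\cdots=t_4=u_1$ via its degree~$\leq 1$ part, which works equally well.
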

\begin{proof}
Write $f$ as
$$(f-f(\frac{u_1+u_2}{2},\frac{u_1+u_2}{2},t_1,t_2,t_3,t_4)) + g(w,t_1,t_2,t_3,t_4),$$
where $g(w,t_1,t_2,t_3,t_4)=f(w,w,t_1,t_2,t_3,t_4)$ and $w=\frac{u_1+u_2}{2}$.

The term in the first bracket vanishes when $u_1=u_2$ and is symmetric in $u_1,u_2$.
Also note that $g(w,t_1,t_2,t_3,t_4)$ is of the form $(w-t_1)(w-t_2)(w-t_3)(w-t_4) h$.
So, we reduce to the case $g=0$ so
$$f(u_1,u_2,t_1,t_2,t_3,t_4)=(u_1-u_2)^2 A(u_1,u_2,t_1,t_2,t_3,t_4).$$
If $A$ vanishes on $t_1=t_2=t_3=t_4$, then by Lemma ~\ref{bansun}, it has degree $\geq 2$.
So assume that $A$ is non vanishing on $t_1=t_2=t_3=t_4=t$ and consider the following:
$$f(u_1,u_2,t,t,t,t)=(u_1-u_2)^2 A(u_1,u_2,t,t,t,t).$$
Put $u_1=t$ which makes the left hand side vanish and hence $A(u_1,u_2,t,t,t,t)$ vanishes when $u_1=t$.
Therefore $A(u_1,u_2,t,t,t,t)$ is divisible by $(u_1-t)(u_2-t)$ and we are done.
\end{proof}

\subsection{For $\operatorname{B}_n$ and $\operatorname{D_n}$}
\begin{definition}
A polynomial $f(x_1,\dots, x_n)$ is symmetric in the pair $(x_i,x_j)$ if $$\sigma_{i,j}(f(x_1,\dots,x_n))=f(x_1,\dots, x_n),$$ where $\sigma_{i,j}$ is the permutation $(i,j)$ in the symmetric group $S_n$.
\end{definition}
\begin{lemma}\label{mindeg3}
Suppose $f(t_1, u_1, t_2,u_2, t_3, u_3)$ is symmetric in the pairs $(t_1,u_1)$ and  $(t_2,u_2)$. Also assume that $f$ vanishes on diagonals of the form $t_a=u_a=t_{a+1}$ and $t_a=u_a=u_{a+1}$, for $a \in \{1,2\}$. Then, degree of $f\geq 4.$
\end{lemma}

\begin{proof}
Suppose $f$ vanishes on $t_1=u_1$. Then we can write $f$ as
$$f(t_1,u_1,t_2,u_2,t_3,u_3)=(t_1-u_1)^2f_1(t_1,u_1, t_2,u_2,t_3,u_3).$$ Consider the polynomial $g_1(t_2,u_2,t_3,v_3):=f_1(t_1,u_1,t_2,u_2,t_3,u_3)$. It follows from the properties of $f$ that $g_1$ is symmetric in $t_2,u_2$ and vanishes on the diagonals $t_2=u_2=t_3$ and $t_2=u_2=u_3$. Now it is easy to see that $g_1$ has degree at least two.

If $f$ does not vanish on $t_1=u_1$, we put $t_1=u_1=c$. Since $f$ vanishes on the diagonals $t_1=u_1=t_2$ and $t_1=u_1=u_2$, we get the following:
$$f(c,c,t_2,u_2,t_3,u_3)=(c-t_2)(c-u_2)f_2(c, t_2,u_2,t_3,u_3).$$
It is easy to see that the polynomial $g_2(t_2,u_2,t_3,u_3):=f_2(c,t_2,u_2,t_3,u_3)$  satisfies the same properties as $g_1$. Hence $g_2$ has degree at least two. This completes the proof.
\end{proof}
\begin{lemma}\label{mindegk+1}
Suppose $f(t_1,u_1, \cdots, t_{m+1},u_{m+1})$ is symmetric in the pairs $(t_a, u_a)$, for $1\leq a \leq m$. Further assume that $f$ vanishes on the diagonals of the form $t_a=u_a=t_{a+1}$ and $t_a=u_a=u_{a+1}$, where $1\leq a \leq m$. Then, degree of $f$ is at least $2m$.
\end{lemma}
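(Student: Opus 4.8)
The plan is to prove Lemma~\ref{mindegk+1} by induction on $m$, peeling off the symmetric pair $(t_1,u_1)$ one at a time, in the style of Lemmas~\ref{bansun} and~\ref{mindeg3}. Set $w_1=\tfrac12(t_1+u_1)$ and write $f=f_{+}+g$, where $g=f(w_1,w_1,t_2,u_2,\dots,t_{m+2},u_{m+2})$ is the restriction of $f$ to $t_1=u_1$, regarded as a polynomial in $w_1$ and the remaining variables, and $f_{+}=f-g$. Then $f_{+}$ is symmetric in $t_1,u_1$ and vanishes along $t_1=u_1$, hence $(t_1-u_1)^2\mid f_{+}$; also $\deg f\ge\deg g$, since $g$ is obtained from $f$ by the substitution $t_1,u_1\mapsto w_1$, which sends a monomial of degree $d$ to a monomial of degree $d$ and so cannot raise total degree. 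For the base case $m=0$ (one pair $(t_1,u_1)$ and terminal variables $t_2,u_2$): if $f_{+}\neq0$ then already $2\le\deg f_{+}\le\deg f$; otherwise $f=g$ vanishes at $w_1=t_2$ and at $w_1=u_2$ by the $a=1$ hypothesis (there being no $t_0,u_0$), so $(w_1-t_2)(w_1-u_2)\mid f$ and $\deg f\ge2=2\cdot0+2$.

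For the inductive step, first suppose $g\neq0$. The $a=1$ hypothesis forces $g$ to vanish along $w_1=t_2$ and along $w_1=u_2$, so $g=(w_1-t_2)(w_1-u_2)\,g'$ for a polynomial $g'$. I claim $g'$ satisfies the hypotheses of Lemma~\ref{mindegk+1} with $m$ replaced by $m-1$, for the chain $(t_2,u_2),\dots,(t_{m+1},u_{m+1})$ with terminal variables $t_{m+2},u_{m+2}$: indeed $g'$ is symmetric in each $t_a,u_a$ with $2\le a\le m+1$, and each diagonal $t_a=u_a\in\{t_{a-1},t_{a+1},u_{a-1},u_{a+1}\}$ with $2\le a\le m+1$ is disjoint from $\{t_1,u_1\}$, hence is inherited by $g$ from $f$, and is not absorbed into the two removed linear factors (none of those diagonals forces $w_1=t_2$ or $w_1=u_2$), so it persists for $g'$. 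By induction $\deg g'\ge2(m-1)+2$, whence $\deg f\ge\deg g=2+\deg g'\ge2m+2$. Next suppose $g=0$; then $f=(t_1-u_1)^2A$, and since $(t_1-u_1)^2$ is a nonzero function along every diagonal disjoint from $\{t_1,u_1\}$, the factor $A$ inherits all such vanishing of $f$: it is symmetric in each $t_a,u_a$ with $2\le a\le m+1$, vanishes along $t_2=u_2\in\{t_3,u_3\}$ and along the diagonals prescribed for $3\le a\le m+1$, so $A$ satisfies, \emph{a fortiori}, the hypotheses of the lemma for $m-1$. Hence $\deg A\ge2m$ and $\deg f=2+\deg A\ge2m+2$.

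The one delicate point is exactly this bookkeeping: one must verify that after collapsing $(t_1,u_1)$ to its diagonal (respectively, dividing out $(t_1-u_1)^2$) the surviving polynomial vanishes on precisely (respectively, at least) the partial diagonals required by the $(m-1)$-st instance of the lemma, and in particular that the ``missing'' $t_0,u_0$ at one end and the undecorated terminal variables $t_{m+2},u_{m+2}$ at the other end match up correctly under the induction. This rests only on the elementary facts, already used repeatedly above, that a vanishing condition along a diagonal disjoint from $\{t_1,u_1\}$ is preserved both under the substitution $t_1=u_1=w_1$ and under division by $(t_1-u_1)^2$, and that this substitution does not raise the total degree; I do not anticipate any further obstruction.
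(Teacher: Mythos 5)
Your proof is correct and follows essentially the same route as the paper: the decomposition $f=(f-f|_{t_1=u_1})+f|_{t_1=u_1}$, the case split on whether the restriction vanishes, extraction of $(t_1-u_1)^2$ or $(w_1-t_2)(w_1-u_2)$, and induction on the length of the chain. The only (harmless) deviation is that you ground the induction at $m=0$ directly rather than invoking Lemma~\ref{mindeg3} for $m=1$ as the paper does, and you spell out the diagonal bookkeeping more explicitly.
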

\begin{proof}
We prove this lemma by induction on $m$. The case $m=1$ is easy and direct. Lemma ~\ref{mindeg3} is the case when $m=2$. Let us first consider the case when $f$ vanishes on $t_1=u_2$. Since $f$ is symmetric in $t_1,u_1$, we get the following:
$$f(t_1 , u_1, \cdots t_{m+1}, u_{m+1})=(t_1-u_1)^2f_1(t_1, u_1, t_2, u_2, \cdots, t_{m+1}, u_{m+1}).$$ A careful inspection shows that $g_1(t_2 , u_2, \cdots, t_{m+1}, u_{m+1}):=f_1(t_1, u_1, t_2, u_2, \cdots, t_{m+1}, u_{m+1})$ satisfies the same properties as function $f$ with $m$ variables. Hence by induction we are done.

If $f$ does not vanish on $t_1=u_1$, we put $t_1=u_1=c$. Since $f$ vanishes on the diagonal $t_1=u_1=t_2$ and $t_1=u_1=u_2$, we can write $f(c,c,t_2,u_2,\dots,t_{m+1},u_{m+1})$ as follows:
$$f(c,c,t_2,u_2,\dots, t_{m+1},u_{m+1})=(c-t_2)(c-u_2)f_2(c,t_2,u_2,\cdots, t_{m+1},u_{m+1}).$$

The polynomial $g_2(t_2,u_2,\dots, t_{m+1},u_{m+1}):=f_2(c,t_2,u_2,\cdots, t_{m+1},u_{m+1})$ satisfies the same properties as $f$ with $m$ variables. By induction, we get degree of $g_2$ is at least $2m-2$. Thus $f$ has degree at least $2m$.

\end{proof}
The following lemmas are proved by induction, Lemma ~\ref{mindeg1} and Lemma ~\ref{mindegk+1}.
\begin{lemma}\label{mindegk+3}
Suppose $f(w,x_1,x_2,x_3, t_1, u_1, t_2, u_2, \cdots, t_{m}, u_{m})$  is symmetric in $x_1, x_2, x_3$ and also symmetric in the pairs $(t_a,u_a)$, for all $1 \leq a < m$. Assume that $f$ vanishes on diagonals of the following form:
\begin{enumerate}
\item $x_1=x_2=w$ (two $x$'s and $w$).
\item $x_1=x_2=t_1$ (two $x$'s and $t_1$) and $x_1=x_2=u_1$ (two $x$'s and $u_1$).
\item $t_a=u_a=t_{a+1}$ and $t_a=u_a=u_{a+1}$, where $1\leq a <m$.
\end{enumerate}
Then, degree of $f$ is at least $2m+3$.

\end{lemma}

\begin{lemma}\label{mindegk+4}

Suppose $f(w, t_1, u_1, x_1, x_2,x_3, t_2, u_2, \cdots, t_{m}, u_{m})$  is symmetric in $x_1,x_2,x_3$ and also symmetric in the pairs $(t_a,u_a)$, for all $1 \leq a < {m}$. Assume that $f$ vanishes on diagonals of the following form:
\begin{enumerate}
\item  $x_1=x_2=t_1$ (two $x$'s and $t_1$) and $x_1=x_2=t_2$ (two $x$'s and $t_2$).
\item $x_1=x_2=u_1$ (two $x$'s and $u_1$) and $x_1=x_2=u_2$ (two $x$'s and $u_2$).
\item $t_1=u_1=w$ and $t_1=u_1=x_p$, where $p\in \{1,2, 3\}$.
\item  $t_a=u_a=t_{a+1}$ and $t_a=u_a=u_{a+1}$, where $2\leq a <m$.
\end{enumerate}
Then, $f$ has degree at least $2m+4$.
\end{lemma}

\subsection{ $\operatorname{C}_n$}
The proofs of the following lemmas are similar that of Lemma ~\ref{mindeg1} and Lemma ~\ref{mindeg3}.
\begin{lemma}\label{deg3}
 Assume $f(t_1, u_1, u_2, u_3, v_1)$ is symmetric in $u_i$'s and vanishes on diagonals of the form $u_1=u_2=t_1$ (two $u$'s and $t_1$ ) and $u_1=u_2=u_3=v_1$. Then, $f$ has degree at least $3$.

\end{lemma}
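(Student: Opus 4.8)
The plan is to run the same averaging-and-divisibility scheme used in Lemmas~\ref{mindeg1}, \ref{mindeg2} and~\ref{mindeg3}. Assume $f\neq 0$. First I would set $w=\tfrac13(u_1+u_2+u_3)$ and split $f$ as $\bigl(f(t_1,u_1,u_2,u_3,v_1)-f(t_1,w,w,w,v_1)\bigr)+h(t_1,w,v_1)$, where $h(t_1,w,v_1):=f(t_1,w,w,w,v_1)$. The bracketed term is symmetric in $u_1,u_2,u_3$ and vanishes on $u_1=u_2=u_3$, so by Lemma~\ref{bansun} it is a linear combination of terms $(u_i-u_j)^2A_{ij}$. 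On the other hand, specializing $u_1=u_2=u_3=w$ turns the vanishing locus $\{u_1=u_2=u_3=v_1\}$ into $\{w=v_1\}$ and the locus $\{u_1=u_2=t_1\}$ into $\{w=t_1\}$, so $h$ is divisible by $(w-t_1)(w-v_1)$; write $h=(w-t_1)(w-v_1)g$. This yields
$$f=(u_1-u_2)^2A_{12}+(u_1-u_3)^2A_{13}+(u_2-u_3)^2A_{23}+(w-t_1)(w-v_1)g.$$

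Next I would extract one more linear factor from $h$. Substituting $u_1=u_2=t_1=c$ (keeping $u_3,v_1$ free) kills the left side and the $A_{12}$-term, and since $w-t_1=\tfrac13(u_3-c)$ the identity collapses, after dividing through by $u_3-c$ and then setting $u_3=c$, to $(c-v_1)\,g(c,c,v_1)=0$. Hence $g(t_1,w,v_1)$ vanishes on $w=t_1$, so $h=(w-t_1)^2(w-v_1)\widetilde g$. If $h\not\equiv 0$ this forces $\deg h\ge 3$, and since $h$ is obtained from $f$ by specialization, $\deg f\ge\deg h\ge 3$, as desired.

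It remains to handle $h\equiv 0$, i.e.\ $f=\sum_{i<j}(u_i-u_j)^2A_{ij}$. Setting $u_1=u_2=c$ gives $f(t_1,c,c,u_3,v_1)=(u_3-c)^2D(t_1,c,u_3,v_1)$, and the hypothesis that $f$ vanishes on $\{u_1=u_2=t_1\}$ forces $D$ to vanish at $t_1=c$, so $(t_1-c)\mid D$ and $\deg f\ge\deg f(t_1,c,c,u_3,v_1)\ge 3$. The only case left is $f\equiv 0$ on $\{u_1=u_2\}$; then by $S_3$-symmetry in the $u$'s each of the pairwise coprime forms $u_1-u_2$, $u_2-u_3$, $u_1-u_3$ divides $f$, so again $\deg f\ge 3$.

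I expect the only real work to be the bookkeeping of these degenerate subcases (when $h$, or a further restriction, vanishes identically) and checking that the chosen specializations really do land on the stated partial diagonals; the algebra itself is routine. The same scheme should dispatch the remaining key lemmas for $\operatorname{C}_n$ alluded to just above, with the averaging taken over whichever group of variables carries the relevant symmetry.
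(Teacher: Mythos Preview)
Your proof is correct and follows essentially the same averaging-and-divisibility scheme the paper indicates (``similar to Lemma~\ref{mindeg3}''), including the extra step of extracting a second factor of $(w-t_1)$ from $h$ by specializing $u_1=u_2=t_1=c$, dividing by $u_3-c$, and letting $u_3\to c$, exactly as in Lemma~\ref{mindeg1}. The case analysis when $h\equiv 0$ and when $f|_{u_1=u_2}\equiv 0$ is also handled correctly.
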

\begin{lemma}\label{deg3'}
Assume $f(t_1,u_1,u_2,v_1,v_2)$ is symmetric separately in $u$'s and $v$'s and vanishes on the diagonals of the form $u_1=u_2=t_1$ and $v_1=v_2=u_1$ (two $v$'s and one $u$). Then, $f$ has degree at least $3$.
\end{lemma}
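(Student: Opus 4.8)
The plan is to follow the template of the proof of Lemma~\ref{mindeg3} (compare also Lemma~\ref{mindeg1}): use an averaging substitution to split $f$ into a piece divisible by the square of a difference and a piece that acquires linear factors from the prescribed vanishing, and then run a short case analysis on the total degree. Throughout assume $f\neq 0$.

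First I would average in the $v$-variables. Put $w=\tfrac{v_1+v_2}{2}$ and write
$$f(t_1,u_1,u_2,v_1,v_2)=\bigl(f(t_1,u_1,u_2,v_1,v_2)-f(t_1,u_1,u_2,w,w)\bigr)+f(t_1,u_1,u_2,w,w).$$
Denote the bracketed term by $f_1$ and set $g(t_1,u_1,u_2,w)=f(t_1,u_1,u_2,w,w)$. Then $f_1$ is symmetric in $v_1,v_2$ and vanishes on $v_1=v_2$, so $(v_1-v_2)^2\mid f_1$; and, because $f$ vanishes on $v_1=v_2=u_1$ (hence, by the $u$-symmetry, also on $v_1=v_2=u_2$), the polynomial $g$ vanishes on $w=u_1$ and on $w=u_2$, so $g=(w-u_1)(w-u_2)\psi$ with $\psi$ symmetric in $u_1,u_2$.

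Next I would exploit the remaining hypothesis, that $f$ vanishes on $u_1=u_2=t_1$. Writing $f_1=(v_1-v_2)^2 A$ with $A$ symmetric in $u_1,u_2$, this forces $A(t_1,t_1,t_1,v_1,v_2)=0$; averaging $A$ now in the $u$-variables (exactly as in Lemma~\ref{mindeg3}, using the two-variable case of Lemma~\ref{bansun}) shows that a nonzero such $A$ is either divisible by $(u_1-u_2)^2$ or carries a factor $(\tfrac{u_1+u_2}{2}-t_1)$, so $f_1\neq 0$ forces $\deg f_1\geq 3$ and we are done. If instead $f_1=0$, then $f=g=(w-u_1)(w-u_2)\psi$; evaluating on $u_1=u_2=t_1$ gives $(w-t_1)^2\psi(t_1,t_1,t_1,w)=0$, hence $\psi(t_1,t_1,t_1,w)=0$, so $\psi\neq 0$ implies $\deg\psi\geq 1$ and again $\deg f=2+\deg\psi\geq 3$. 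Since $f\neq 0$ means that at least one of $f_1,\psi$ is nonzero, this covers all cases.

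The arguments are all elementary polynomial manipulations of the type already used for Lemmas~\ref{mindeg1} and~\ref{mindeg3}; the only point requiring a little care is the recursion, where one averages first in the $v$'s and then in the $u$'s. I do not anticipate any real obstacle here, the subtlest case being $f_1=0$, where the linear factor in $w$ must be extracted from $\psi$ using the $u_1=u_2=t_1$ vanishing. The companion Lemma~\ref{deg3} is handled identically, replacing the single averaging in the $v$'s by averaging in the three-element block of $u$'s.
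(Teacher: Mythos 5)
Your argument is correct and follows exactly the route the paper intends: the paper gives no separate proof of Lemma~\ref{deg3'}, stating only that it is ``similar to Lemma~\ref{mindeg3},'' and your averaging in the $v$'s, the split $f=(v_1-v_2)^2A+(w-u_1)(w-u_2)\psi$, and the extraction of the extra factor from the $u_1=u_2=t_1$ vanishing is precisely that template. The only implicit step (shared with the paper's own proofs) is that no cancellation occurs between the two pieces, which holds because they live in distinct powers of $v_1-v_2$.
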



The proofs of the next two lemmas are similar to the proofs of Lemma ~\ref{mindegk+1} and Lemma ~\ref{mindeg1}.

\begin{lemma}\label{mig3}

Suppose $f(w,x_1,x_2,x_3, t_1, u_1, t_2, u_2, \cdots, t_{m}, u_{m})$  is symmetric in $x_1, x_2, x_3$ and also symmetric in the pairs $(t_a,u_a)$, for all $1 \leq a \leq m$. Assume that $f$ vanishes on diagonals of the following form:
\begin{enumerate}
\item $x_1=x_2=w$ (two $x$'s and $w$).
\item $x_1=x_2=t_1$ (two $x$'s and $t_1$) and $x_1=x_2=u_1$ (two $x$'s and $u_1$).
\item $t_1=u_1=x_p$, where $p\in \{1,2,3\}$.
\item $t_a=u_a=t_{a-1}$ and $t_a=u_a=u_{a-1}$, where $1 < a \leq m$.
\end{enumerate}
Then, degree of $f$ is at least $2m+4$.
\end{lemma}

\begin{lemma}\label{mig4}
Suppose $f(w, t_1, u_1, x_1, x_2,x_3, t_2, u_2, \cdots, t_{m}, u_{m})$  is symmetric in $x_1,x_2,x_3$ and also symmetric in the pairs $(t_a,u_a)$, for all $1 \leq a \leq  {m}$. Assume that $f$ vanishes on diagonals of the following form:
\begin{enumerate}
\item  $x_1=x_2=t_1$ (two $x$'s and $t_1$) and $x_1=x_2=t_2$ (two $x$'s and $t_2$).
\item $x_1=x_2=u_1$ (two $x$'s and $u_1$) and $x_1=x_2=u_2$ (two $x$'s and $u_2$).
\item $t_1=u_1=w$ and $t_1=u_1=x_p$, where $p\in \{1,2, 3\}$.
\item $t_2=u_2=x_p$, where $p \in \{1,2,3\}$.
\item  $t_a=u_a=t_{a-1}$ and $t_a=u_a=u_{a-1}$, where $2 < a \leq m$.
\end{enumerate}
 Then, $f$ has degree at least $2m+5$.
\end{lemma}



\begin{remark}\label{f4remark}
In the case of $\frg=\operatorname{F}_4$ we will need to prove several degree lemmas, for example: Let $f$ be a polynomial in $12$ variables $u_1,u_2$, $v_1,v_2,v_3$, $w_1,w_2,w_3,w_4,w_5$, $x_1,x_2$, with the following properties
\begin{enumerate}
\item $f$ is symmetric (separately) in $u$'s, $v$'s, $w$'s and $x$'s.
\item $f$ vanishes on the following partial diagonals: equality of two $u$'s and one $v$, two $v$'s and one $u$, two $v$'s and one $w$, two $w$'s and one $x$, two $x$'s and one $w$; and three $w$'s and one $v$.
\end{enumerate}
Then, we will need to show that the degree of $f$ is $\geq 21$.
\end{remark}

\section{Properties of residues}\label{IKEA} Suppose $\Omega$ is a top degree form  defined in a neighborhood of $0\in \Bbb{C}^M$, which is  regular on the complement of the union of ($\binom{M}{2} +M$ many) divisors $t_i=t_j, i<j$ and $t_i=0$.

\begin{lemma}\label{lemmefondamental}
Suppose $\Omega$ has at most a simple pole along $t_1=t_2$.
\begin{enumerate}
\item Suppose  that $\Omega$ is regular at the generic point of $t_1=t_3$ and at the generic point of $t_2=t_3$. Then the form
$\Res_{t_1=t_2}\Omega$ in $(t_1,t_3,t_4,\dots)$ is generically regular on $t_1=t_3$.
\\
\item Suppose that $\Omega$ has a pole of order less than $n$ along $t_3=t_4$ and a simple pole along $t_1=t_2$. Then the form $\Res_{t_1=t_2}\Omega$ in $(t_1, t_3, t_4, \dots)$ has poles along $t_3=t_4$ of order less than $n$.
\end{enumerate}
\end{lemma}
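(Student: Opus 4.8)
The plan is to prove both parts by localizing at the generic point of an intersection of two coordinate diagonals, putting $\Omega$ into the normal form forced by the stated pole orders there, and reading off the residue directly; no global or cohomological input is needed.

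For (1) I would work in a neighbourhood of the generic point $\eta$ of the codimension‑two stratum $\{t_1=t_2=t_3\}$. Among the divisors $\{t_i=t_j\}_{i<j}$ and $\{t_i=0\}$, only the three divisors $\{t_1=t_2\}$, $\{t_1=t_3\}$, $\{t_2=t_3\}$ pass through $\eta$, because at $\eta$ the remaining variables $t_4,\dots,t_M$ take generic values, pairwise distinct, distinct from the common value of $t_1,t_2,t_3$, and nonzero. Writing $\Omega=Q\,dt_1\wedge\cdots\wedge dt_M$, the hypotheses (at most a simple pole along $t_1=t_2$, regularity along $t_1=t_3$ and along $t_2=t_3$) say exactly that $P:=(t_1-t_2)Q$ is holomorphic near $\eta$. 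Then, up to sign, $\Res_{t_1=t_2}\Omega=P|_{t_1=t_2}\,dt_1\wedge dt_3\wedge\cdots\wedge dt_M$, and $P|_{t_1=t_2}$ is holomorphic near $\eta$ viewed inside $\{t_1=t_2\}\cong\Bbb{C}^{M-1}$; under the parameterization that keeps the smaller variable, that point is precisely the generic point of $\{t_1=t_3\}$ in $\Bbb{C}^{M-1}$. Hence $\Res_{t_1=t_2}\Omega$ is generically regular along $t_1=t_3$, as wanted.

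For (2) the same argument runs at the generic point $\eta'$ of $\{t_1=t_2\}\cap\{t_3=t_4\}$ (codimension two, with $1,2,3,4$ distinct): the only pole divisors of $\Omega$ through $\eta'$ are $\{t_1=t_2\}$ and $\{t_3=t_4\}$, of orders $1$ and $\le n-1$ respectively, so $P:=(t_1-t_2)(t_3-t_4)^{n-1}Q$ is holomorphic near $\eta'$, and up to sign $\Res_{t_1=t_2}\Omega=(t_3-t_4)^{-(n-1)}\,P|_{t_1=t_2}\,dt_1\wedge dt_3\wedge\cdots\wedge dt_M$ with $P|_{t_1=t_2}$ holomorphic near the generic point of $\{t_3=t_4\}$; thus $\Res_{t_1=t_2}\Omega$ has a pole of order $\le n-1<n$ along $t_3=t_4$. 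I do not expect a real obstacle: the only delicate points are the two bits of bookkeeping — checking which divisors actually meet the relevant generic point, and matching the generic point of the residue divisor before and after restricting to $\{t_1=t_2\}$ with the smaller‑variable convention — and both become routine once $\Omega$ is written in the normal form above.
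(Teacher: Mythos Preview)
Your proposal is correct and is essentially the same argument as the paper's: both hinge on the fact that the polar locus of $\Omega$ is contained in the fixed hyperplane arrangement, so after clearing the simple pole along $t_1=t_2$ the numerator is holomorphic near the relevant codimension-two stratum and its restriction gives the residue with no new pole. The paper writes this by expressing $\Omega=g/(P\,(t_1-t_2))\,dt_1\wedge\cdots\wedge dt_M$ with $P$ an explicit product of the linear forms $(t_i-t_j)$ and $t_i$ and observing $P(t,t,t,t_4,\dots)\neq 0$ generically, while you phrase the same computation via localization at the generic point of the stratum; the content is identical.
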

\begin{proof}
For the first part we proceed as follows:

Assume
$$\Omega =\frac{g(t_1,\dots,t_M)}{P(t_1-t_2)}dt_1\wedge\dots\wedge dt_M,$$ where $g$ is a holomorphic function and $P$ is a polynomial in $t_1, \dots, t_M$ whose factors are of the form $t_i^{n_i}$ and $(t_i-t_j)^{a_{i,j}}$ for suitable exponents $n_i$ and $a_{i,j}$.

Since $P(t,t,t,t_4,\dots,t_M)\neq 0$ (generically), therefore the residue which is (up to sign)
$$\frac{g(t_1,t_1,t_3,\dots,t_M)}{P(t_1,t_1,t_3,\dots,t_M)}dt_1\wedge dt_2\wedge dt_4\wedge\cdots\wedge dt_M$$
is generically regular  on $t_1=t_3$.

The proof of (2) is similar to (1).
\end{proof}

\begin{remark} Note that if $P$ had a term of the form $(t_1+t_2-2t_3)$, then after $t_1=t_2$, we would have had a new pole at $t_1=t_3$ in the residue. It is important that the polar set of $\Omega$ does not contain sets like $t_1+t_3=t_2+t_4$, which after $t_1=t_2$ turn into a $t_3=t_4$.
\end{remark}

\begin{lemma}\label{iter4}
Let $I_1, I_2, \dots, I_n$ be pairwise disjoint subsets of $[M]$ such that $|I_j|=m_j$. Then for any $\sigma \in S_n$, the following equality of iterated residues of $\Omega$ holds:
$$\Res_{\vec{I}_{\sigma(1)}}\cdots \Res_{\vec{I}_{\sigma(n)}}\Omega=\Res_{\vec{I}_{1}}\cdots \Res_{\vec{I}_{n}}\Omega.$$
\end{lemma}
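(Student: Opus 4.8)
The plan is to reduce Lemma~\ref{iter4} to the two--block case Lemma~\ref{iter3}, using that the symmetric group $S_n$ is generated by the adjacent transpositions $s_1,\dots,s_{n-1}$, where $s_j$ interchanges $j$ and $j+1$. First I would record the elementary observation that a Poincar\'e residue along a smooth divisor is a well--defined $\mathbb{C}$--linear operation on the space of top--degree forms having at most a simple pole along that divisor: if two such forms on some $\mathbb{A}^K$ agree and both admit the relevant residue, their residues agree. Iterating this, if two iterated residues of $\Omega$ coincide after performing all but the outermost $j-1$ of the residue operations, then they coincide after performing those as well. Throughout I use, as in the preceding lemmas of this section (cf. Lemma~\ref{observation}, Proposition~\ref{controliter}), the standing hypothesis that $\Omega$ and each of its iterated residues has at most simple poles along the diagonals being residuated, so that every residue written below is defined.

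Next I would treat the single--swap step. Fix an ordering $(J_1,\dots,J_n)$ of $\{I_1,\dots,I_n\}$ and an index $j$ with $1\le j\le n-1$, and set
$$\Omega' \;=\; \Res_{\vec{J}_{j+2}}\cdots\Res_{\vec{J}_{n}}\,\Omega,$$
a top--degree form on $\mathbb{A}^{K}$, where $K$ is obtained from $[M]$ by deleting the non--minimal indices of $J_{j+2},\dots,J_{n}$. Since these deleted indices lie in $J_{j+2}\cup\cdots\cup J_{n}$, which is disjoint from $J_j\cup J_{j+1}$, we have $J_j\cup J_{j+1}\subseteq K$, so $J_j$ and $J_{j+1}$ are disjoint subsets of $K$; Lemma~\ref{iter3} (applied with $[M]$ replaced by $K$) gives $\Res_{\vec{J}_j}\Res_{\vec{J}_{j+1}}\Omega'=\Res_{\vec{J}_{j+1}}\Res_{\vec{J}_j}\Omega'$. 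Applying the outer residue operations $\Res_{\vec{J}_1},\dots,\Res_{\vec{J}_{j-1}}$ to both sides and invoking the first paragraph, we conclude
$$\Res_{\vec{J}_1}\cdots\Res_{\vec{J}_n}\Omega \;=\; \Res_{\vec{J}_1}\cdots\Res_{\vec{J}_{j-1}}\Res_{\vec{J}_{j+1}}\Res_{\vec{J}_{j}}\Res_{\vec{J}_{j+2}}\cdots\Res_{\vec{J}_n}\Omega;$$
that is, swapping two adjacent blocks in an iterated residue leaves it unchanged.

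Finally, given an arbitrary $\sigma\in S_n$, I would write $\sigma$ as a product of adjacent transpositions and apply them one at a time to the ordering $(I_1,\dots,I_n)$; by the previous step the iterated residue is unchanged at each stage, and after the last transposition the ordering is $(I_{\sigma(1)},\dots,I_{\sigma(n)})$, which yields the lemma. Formally this is an induction on the word length of $\sigma$ in the generators $s_1,\dots,s_{n-1}$.

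The main obstacle here is entirely bookkeeping rather than analysis: one must carefully track which block occupies which slot after a sequence of adjacent swaps, and verify the index--set identity in the definition of $\Omega'$ above, so that at each stage the residue is genuinely being taken along a simple--pole divisor in the surviving variables. All the analytic content is already contained in Lemma~\ref{iter3} (equivalently in Lemma~\ref{lemmefondamental}), and no estimate beyond those established earlier is required.
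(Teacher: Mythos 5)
Your proposal is correct and follows exactly the route the paper intends (the paper states Lemma~\ref{iter4} without proof immediately after Lemma~\ref{iter3}, clearly meaning the reduction via adjacent transpositions generating $S_n$). Your careful bookkeeping of which variables survive at each stage, and the standing simple-pole hypothesis guaranteeing each residue is defined, fill in precisely the details the paper leaves implicit.
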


\bibliographystyle{plain}

\begin{thebibliography}{10}
\bibitem{ATY}
H. Awata, A. Tsuchiya and Y. Yamada,
{\em Integral formulas for the WZNW correlation functions},
 Nuclear Phys. B {\bf 365} (1991), no. 3, 680--696.
\bibitem{bea} A. ~Beauville, {\em  Conformal blocks, Fusion rings and the Verlinde formula,} Proc. of the Hirzebruch 65 Conf. on Algebraic Geometry, Israel Math. Conf. Proc. {\bf 9}, 75--96 (1996).
\bibitem{B} P. ~Belkale, {\em Unitarity of the KZ/Hitchin connection on conformal blocks in genus 0 for arbitrary Lie algebras},
 J. Math. Pures Appl. (9) {\bf 98} (2012), no. 4, 367--389.

 \bibitem{BFS} R. Bezrukavnikov, M. Finkelberg, and V. Schechtman, {\em Factorizable sheaves and quantum groups}, Lecture notes in mathematics, {\bf 1691}, Springer 1998.
 \bibitem{bourbaki} N. Bourbaki, {\em Groupes et alg\`{e}bres de Lie}, chapitres 4, 5 et 6, Masson, Paris, 1981.
 \bibitem{Brie} E. Brieskorn, {\em Sur les group des tresses (d'apr\'{e}s V.I. Arnold)}. S\'{e}minaire Bourbaki
{\bf 24}, 1971/72. (Lect. Notes Math., vol. 317) Berlin Heidelberg New York: Springer 1973.
\bibitem{D} P. Deligne, {\em Th\'{e}orie de Hodge II.} Inst. Hautes \'Etudes Sci. Publ. Math. No. {\bf 40} 5--57.
\bibitem{FSV} B.~Feigin, V.~ Schechtman, and A.~Varchenko, {\em On algebraic equations satisfied by hypergeometric correlators in WZW models. II},  Comm. Math. Phys. Volume {\bf 170}, Number 1 (1995), 219--247.
\bibitem {loo} E. Looijenga, {\em Unitarity of $\operatorname{SL}(2)$-conformal blocks in genus zero}, J. Geom. Phys. {\bf 59}, 654--662 (2009).
\bibitem{loo2} E. Looijenga, {\em The KZ system via polydifferentials}, Arrangements of hyperplanes—Sapporo 2009, 189–-231, 
Adv. Stud. Pure Math., {\bf 62}, Math. Soc. Japan, Tokyo, 2012. 
\bibitem{TRR} T. R. Ramadas, {\em The Harder-Narasimhan trace and unitarity of the KZ/Hitchin connection: genus
0}, Annals of Math. Vol. {\bf 169} (2009), No. 1, 1--39.
\bibitem{SV}
V. V. Schechtman and A. N. Varchenko, {\em Arrangements of hyperplanes and Lie
algebra homology}, Invent. Math. {\bf 106}, 139-194 (1991)

\bibitem{STV} V. Schechtman, H. Terao, and  A. Varchenko, {\em Local systems over complements
of hyperplanes and the Kac-Kazhdan conditions for singular vectors},
J. Pure Appl. Algebra {\bf 100} (1995), 93--102.
\bibitem{J.-P} J.-P. Serre, {Complex Semisimple Lie algebras}, Springer Monographs in Mathematics. Springer-Verlag, Berlin, 2001.
\bibitem{sorger}
C. Sorger, {\em La formule de Verlinde}, S\'{e}minaire Bourbaki, Ast\'{e}risque, (1996), pp. Exp. No. {\bf 794}, 3, 87–114,
Vol. 1994/95.
\bibitem{TK} A. Tsuchiya and Y.  Kanie, {\em  Vertex operators in conformal field theory on $\Bbb{P}^1$ and monodromy representations
of braid group}, in Conformal field theory and solvable lattice models (Kyoto, 1986), vol. {\bf 16}
of Adv. Stud. Pure Math., Academic Press, Boston, MA, 1988, 297--372.
\bibitem{TUY} A. Tsuchiya, K. Ueno and Y.  Yamada, {\em Conformal field theory on universal family of stable curves with
gauge symmetries}, Integrable systems in quantum field theory and
statistical mechanics, 459--566,
Adv. Stud. Pure Math., {\bf 19}, Academic Press, Boston, MA, 1989.

\bibitem{Ueno} K. Ueno, {\em Conformal field theory with gauge symmetry}, Fields Institute Monographs, vol. {\bf 24}, American Mathematical Society, Providence, RI, 2008.
\bibitem{V} A. Varchenko, {\em Multidimensional hypergeometric functions and representation theory of Lie algebras and quantum groups}, Advanced Series in Mathematical Physics, {\bf 21}. World Scientific Publishing Co., Inc., River Edge, NJ, 1995.

\end{thebibliography}
\def\noopsort#1{}

\end{document}